\documentclass[11pt,reqno,a4paper]{amsart}
\usepackage{a4wide}
\usepackage{amsmath,amsthm,amssymb,amsrefs} 
\usepackage{graphicx}
\usepackage{stmaryrd} 
\usepackage{tikz-cd} 
\usepackage{verbatim}



\newtheorem{thm}{Theorem}[section]
\newcommand{\bt}{\begin{thm}}
\newcommand{\et}{\end{thm}}

\newtheorem{ex}[thm]{Example}

\newtheorem{cor}[thm]{Corollary} 
\newcommand{\bc}{\begin{cor}}
\newcommand{\ec}{\end{cor}}

\newtheorem{lem}[thm]{Lemma}
\newcommand{\bl}{\begin{lem}}
\newcommand{\el}{\end{lem}}

\newtheorem{prop}[thm]{Proposition}
\newcommand{\bp}{\begin{prop}}
\newcommand{\ep}{\end{prop}}

\newtheorem{defn}[thm]{Definition}

\newtheorem{rmrk}[thm]{Remark}

\newenvironment{customthm}[1]
  {\innercustomthm}
  {\endinnercustomthm}

\newcommand{\GHto}{\stackrel { \textrm{GH}}{\longrightarrow} }
\newcommand{\Fto}{\stackrel {\mathcal{F}}{\longrightarrow} }

\newcommand{{\set}}{\rm{set}}

\newcommand{\bee}{\begin{equation*}}

\newcommand{\ee}{\end{equation}}
\newcommand{\eee}{\end{equation*}}

\newcommand{\N}{\mathbb{N}}
\newcommand{\R}{\mathbb{R}}
\newcommand{\M}{\mathbb{M}} 
\newcommand{\E}{\mathbb{E}} 
\newcommand{\Z}{\mathbb{Z}}

\newcommand{\diam}{\operatorname{Diam}}

\newcommand{\Fm}{{\mathcal F}}
\newcommand{\image}{\operatorname{Im}}

\newcommand{\id}{\operatorname{id}} 
\newcommand{\Lip}{\operatorname{Lip}}
\newcommand{\mass}{{\mathbf M}}

\def\g{\mathbf{g}}
\def\h{\mathbf{h}}
\def\bsigma{{\boldsymbol{\sigma}}}
\def\etab{{\boldsymbol{\eta}}}

\newcommand{\A}{\hat{\mathcal{A}}}

\makeatletter
\@namedef{subjclassname@2020}{%
  \textup{2020} Mathematics Subject Classification}
\makeatother

\begin{document}

\title[Properties of the null distance and spacetime convergence]
{Properties of the null distance and\\ spacetime convergence}

\author[B.~Allen]{Brian Allen$^\dagger$}
\thanks{$^\dagger$Department of Mathematics, University of Hartford, 200 Bloomfield Avenue, West Hartford, CT 06117-1599, USA, \textit{Email}: \texttt{brianallenmath@gmail.com}}

\author[A.~Burtscher]{Annegret Burtscher$^\ddagger$}
\thanks{$^\ddagger$Department of Mathematics, Radboud University, PO Box 9010, Postvak 59, 6500 GL Nijmegen, The Netherlands, \textit{Email}: \texttt{burtscher@math.ru.nl}}

%
%

\begin{abstract} 
 The null distance for Lorentzian manifolds was recently introduced by Sormani and Vega. Under mild assumptions on the time function of the spacetime, the null distance gives rise to an intrinsic, conformally invariant metric that induces the manifold topology. We show when warped products of low regularity and globally hyperbolic spacetimes endowed with the null distance are (local) integral current spaces. This metric and integral current structure sets the stage for investigating convergence analogous to Riemannian geometry. Our main theorem is a general convergence result for warped product spacetimes relating uniform, Gromov--Hausdorff and Sormani--Wenger intrinsic flat convergence of the corresponding null distances. In addition, we show that non-uniform convergence of warping functions in general leads to distinct limiting behavior, such as limits that disagree.
\end{abstract}

\maketitle



\section{Introduction}\label{Intro}


 What is a good notion of convergence for Lorentzian manifolds? Riemannian manifolds naturally carry the structure of metric spaces, and standard notions of metric convergence such as Gromov--Hausdorff (GH) and Sormani--Wenger intrinsic flat (SWIF) convergence \cite{SW} naturally interact with the Riemannian structure and (weak) curvature bounds. 
 Despite the increasing need to understand Lorentzian manifolds of low regularity, a comprehensive metric theory is still in its infancy. 

 Shing-Tung Yau first suggested that a notion of spacetime SWIF convergence is needed in light of geometric stability questions arising from spacelike observations in connection with the positive mass theorem by Lee and Sormani~\cite{LeeS}. The most natural analogue to the Riemannian distance function, the Lorentzian distance, is based on the maximization of curve lengths. But this distance does not even give rise to a metric structure (see \cite{BEE}*{Ch.\ 4} for more details), let alone an integral current space needed for SWIF convergence. With the aim to obtain a metric theory suitable for spacetime convergence Sormani and Vega defined the null distance for Lorentzian manifolds with time functions \cite{SV}. In upcoming work, announced in \cite{S-Ob}, Sormani and Vega analyze big bang spacetimes using the null distance and Sakovich and Sormani \cite{SakS2} consider the null distance with respect to the cosmological time function on maximal developments. Note that Noldus~\cite{No} has a different approach towards defining a GH convergence based on the Lorentzian distance. 
 
 In this paper, we investigate in detail the metric and integral current structure of Lorentzian manifolds with respect to the null distance and provide spacetime convergence results for warped product spacetimes of low regularity. While as soon as the null distance is shown to be definite it can be readily used for GH convergence, to consider SWIF convergence we also need to prove bi-Lipschitz bounds to show that the spacetimes under consideration are (local) integral current spaces.

 A comprehensive spacetime convergence theory will open the door to an exciting new direction of geometric spacetime stability and low-regularity Lorentz\-ian geometry. Causality theory for continuous Lorentzian metrics has already been investigated extensively (see Sorkin and Woolgar \cite{SWo}, Chru\'{s}ciel and Grant \cite{CG}, Galloway, Ling, and Sbierski \cite{GLS}, Graf and Ling \cite{GL}, S\"amann \cite{Sae}, see also Minguzzi \cite{Min}, and Minguzzi and Suhr \cite{MSu} for related work). Paired with weak notions of curvature (bounds) new applications in both mathematics and physics are expected. In particular in connection with notions of distributional curvature (see, for instance, Chen and Li~\cite{CL}, LeFloch and Mardare~\cite{LM}) and curvature bounds (see, for instance, Alexander and Bishop~\cite{AB}, Andersson and Howard~\cite{AH}, Harris \cite{H}, Kunzinger and S\"{a}mann \cite{KS}, McCann \cite{McC}, Mondino and Suhr \cite{MoSu}, Yun \cite{Y}) in general relativity, GH and SWIF spacetime convergence has the potential to transform Lorentzian geometry the same way the metric approach has led to a new era in Riemannian geometry a few decades ago.

\subsection{Spacetime metric structure and convergence}\label{subsec: Spacetime Metric Structure Intro}
 
 In search for a far-reaching metric structure on Lorentzian manifolds Sormani and Vega \cite{SV} recently defined the null distance $\hat d_\tau$ on spacetimes with a (generalized) time function $\tau$. From a purely geometric point of view the null distance is thus immediately defined on any noncompact manifold, since any such manifold automatically admits a spacetime structure with a smooth time (or even temporal) function \cite{Kok}*{Theorem 2}. The null distance is based on the minimization of the null length of broken causal curves. More precisely, if $\beta \colon [a,b] \to M$ is a continuous, piecewise smooth causal curve that is either future-directed or past-directed causal on its pieces $a=s_0 < s_1 < \ldots < s_k =b$, then the null length of $\beta$ is given by
 \[
  \hat L_\tau (\beta) = \sum_{i=1}^k |\tau(\beta(s_i))-\tau(\beta(s_{i-1}))|,
 \]
 and the null distance between $p,q \in M$ is
 \[
  \hat d_\tau(p,q) = \inf \{ \hat L_\tau(\beta) : \beta ~\text{is a piecewise causal curve from $p$ to $q$} \}.
 \]
 This null distance turns a time-oriented Lorentzian manifold with a time function $\tau$, satisfying mild assumptions, into a metric space in a conformally invariant way. Indeed, in Section~\ref{sect-Length Spaces} we show that spacetimes with suitable time functions possess additional structure.
 
\begin{thm}\label{thm:lengthspace}
 If $(M,\g)$ is a spacetime with locally anti-Lipschitz time function $\tau$, then $\hat d_\tau$ is an intrinsic metric and hence $(M,\hat d_\tau)$ is a length space. 
\end{thm}

 The local influence of different (suitable) time functions on the metric structure obtained by the null distances is minor, in particular, when temporal functions are used (which is no additional restriction \cite{Min}*{Theorem 4.100}). In this paper we are mostly interested in globally hyperbolic spacetimes and sequences of compact warped product spacetimes. For the latter one can easily verify the global equivalence of null distances $\hat d_\tau$ for all time functions $\tau$ as defined in Theorem~\ref{thm:warpedcausality} (the more general case is discussed in Section~\ref{sect-Background}).

 With the null distance metric space structure at hand some particularly important applications in general relativity can be addressed, such as questions of geometric stability.  For instance, a question posed by Sormani in \cite{S-Ob} is ``In what sense is a black hole spacetime of small mass close to Minkowski space?''

 The question of geometric stability has already been studied extensively for spacelike slices of spacetimes by Allen \cite{BA1,BA2,BA3}, Bray and Finster \cite{BF}, Bryden, Khuri, and Sormani \cite{BKS}, Cabrera Pacheco \cite{ACP}, Finster \cite{FelF}, Finster and Kath \cite{FK}, Huang and Lee \cite{HL}, Huang, Lee, and Sormani \cite{HLS}, Jauregui and Lee \cite{JL}, Lee \cite{DL}, Lee and Sormani \cite{LeeS}, LeFloch and Sormani \cite{LFS}, Sakovich and Sormani \cite{SakS}, Sormani and Stavrov Allen \cite{SStav}. Lee and Sormani \cite{LeeS} conjecture that Sormani--Wenger intrinsic flat (SWIF) convergence is the correct notion of convergence for stability. This conjecture is supported by one of their examples with increasingly many, increasingly small gravity wells of fixed depth which does not have a smooth or Gromov--Hausdorff (GH) limit but whose SWIF limit is Euclidean space, which justifies this claim. In the full Lorentzian case the notion of null distance is crucial in order to turn a spacetime into a metric space so that standard metric notions of convergence such as GH and SWIF convergence can by applied to spacetimes. 
 
 Since the SWIF notion of distance is defined on integral current spaces (intuitively a metric space with countable many bi-Lipschitz charts to $\R^n$) one of our goals of this paper is to show that globally hyperbolic spacetimes are (local) integral current spaces (recall that GH distance is defined as soon as the null distance is definite). Currents are geometric objects used in the calculus of variations and geometric measure theory that generalize the notion of integration. The theory of currents was developed by Federer and Fleming for Euclidean space \cite{FF} and extended to metric spaces by Ambrosio and Kirchheim \cite{AK}. The theory of rectifiable currents played a crucial role in establishing the existence of minimal surfaces with given boundary in the higher dimensional Plateau problem. An integral current structure, in particular, guarantees that a metric space is a countably $\mathcal{H}^{n+1}$ ($n+ 1$-dimensional Hausdorff measure) rectifiable space \cite{SW} which is an important additional property of a metric space and useful for studying spacetimes of low regularity.
 
\begin{thm}\label{thm: Warp are integral currents}
 Let $I$ be an interval and $(\Sigma,\bsigma)$ be a connected complete Riemannian manifold. Suppose $f \colon I \to (0,\infty)$ is a bounded function that is bounded away from $0$. There is a natural local integral current space structure on the warped product spacetime $M = I \times_f \Sigma$ with respect to the null distance $\hat d_f$.  If $I \times \Sigma$ is compact then $(M,\hat d_f)$ carries an integral current space structure. 
\end{thm}

 In order to prove Theorem \ref{thm: Warp are integral currents} we derive explicit expressions for the null distance of Lorentzian products as well as detailed estimates of the null distance on warped products with bounded warping functions. These prototype classes of spacetimes are important to better understand the null distance, and we also use this insight in obtaining a more general result for globally hyperbolic spacetimes.
 
\begin{thm}\label{thm: Globally Hyperbolic Are Integral Current Spaces}
 Let $(M,\g)$ be a globally hyperbolic spacetime with (smooth) time function $\tau$. Suppose $M$ admits Cauchy hypersurfaces on which the ambient Lorentzian metric restricts as a complete Riemannian metric and $(M,\hat d_\tau)$ is complete as a metric space. Then $(M,\hat{d}_\tau)$ is a local integral current space.
If $M$ is compact then it is an integral current space.
\end{thm}

 The main ingredient that allows us to extend Theorem~\ref{thm: Warp are integral currents} to the globally hyperbolic setting in Theorem~\ref{thm: Globally Hyperbolic Are Integral Current Spaces} is that a (locally) bi-Lipschitz map from a (local) integral current space to a metric space naturally induces a (local) integral current space structure on the target. Since this is an important result in its own right it is stated separately for integral current spaces in Theorem~\ref{thm:intcurrent} and local integral current spaces in \ref{thm:localintcurrent}.

 The metric and integral current structure of spacetimes establishes the basis for proving spacetime convergence results. In this paper we initiate this investigation of spacetime convergence for the important class of warped product spacetimes. In general relativity, for instance, the cosmological Friedmann--Lema\^{\i}tre--Robertson--Walker (FLRW) metrics describing a homogeneous and isotropic universe are of this class. Our goal is to identify conditions on the sequence of warping functions which guarantee GH and/or SWIF convergence with respect to the null distance with the aim of generalizing these conditions to arbitrary sequences of spacetimes in the future. Due to the physical relevance of Lorentzian geometry it is also crucial to know when limits of spacetimes disagree (or do not exist at all). In the final section of this paper we therefore present examples of warped product spacetimes with distinct limiting behavior. For instance, we construct an example with degenerate behavior of the limiting warping function which leads to different GH and SWIF limits. Two more examples illustrate that the way in which the warping functions converge and the properties of the limiting warping function itself influence the structure of the limiting integral current space. For instance, $L^p$-convergence in the spacetime setting also appears to contain useful information, something that was recently already observed for Riemannian warped products by Allen and Sormani \cite{AS}, however, with important distinctions coming from the fact that the null lengths are not as directly related to the warping functions as in the Riemannian case. Motivated by these examples we prove our main theorem on the convergence of warped product spacetimes.

\begin{thm}\label{thm:fuconv}
 Let $(\Sigma,\bsigma)$ be a connected compact Riemannian manifold, and $I$ be a closed interval. Suppose $(f_j)_j$ is a sequence of continuous functions $f_j \colon I \to (0,\infty)$ (uniformly bounded away from $0$) and $M_j = I \times_{f_j} \Sigma$ are warped products with Lorentzian metrics
 \[
  \g_j = -dt^2 + f_j(t)^2 \bsigma.
 \]
 Assume that $(f_j)_j$ converges uniformly to a limit function $f_\infty$.
 Then the null distances corresponding to the integral current spaces $M_j$ converge in the uniform, GH, and SWIF sense to the null distance defined with respect to the warped product $M_{\infty} = I \times_{f_\infty}\Sigma$.
\end{thm}

 Note that $\Sigma$ is explicitly allowed to have a boundary (see Remark~\ref{rmrk-Boundary Allowed in HLS}).
 
 Though this theorem involves a stronger notion of convergence then one expects for stability of spacetime questions with gravity wells and black holes, results similar to Theorem~\ref{thm:fuconv} have already successfully been used in the stability of the Riemannian positive mass theorem~\cite{BA4,HLS}. In \cite{BA4}, Allen used a related Riemannian convergence result in order to efficiently handle singular sets, which can be done by showing uniform or smooth convergence on compact sets away from the singular set (see, for instance, the work of Lakzian \cite{Lak} and Lakzian and Sormani \cite{LS}). In \cite{HLS}, Huang, Lee, and Sormani needed a stronger convergence on the boundary of interest in order to show that no points on the outer boundary disappear in the limit. Besides stability questions, uniform convergence can also be employed to analyze structural properties of continuous (semi-)Riemannian metrics (see, for instance, the work of Burtscher \cite{B} and Chru\'{s}ciel and Grant \cite{CG}).

\subsection{Causality and completeness}

 Sormani and Vega already proved that the null distance is always a pseudometric and, as long as the time function is anti-Lipschitz, definite. As discussed earlier, in Theorem~\ref{thm:lengthspace} we show that the (local) anti-Lipschitz property of $\tau$ automatically also yields a length space $(M,\hat d_\tau)$. 
 
 Since there are, however, many artificial ways to turn a Lorentzian manifold into a metric space it is important that such a metric is connected to the causal structure of the Lorentzian manifold on which it is defined.It is clear that the null distance satisfies
 \[
  p \leq q \Longrightarrow \hat d_\tau (p,q) = \tau(q) - \tau(p).
 \]
 If the converse also holds $\hat d_\tau$ is said to encode causality, which is thus a property that enables the null distance to capture whether or not points of a spacetime are causally related. Fully understanding when the null distance encodes causality is mentioned as an open problem in \cite{SV}. For warped product spacetimes, the answer is already known.
 
\begin{thm}[\cite{SV}*{Theorem 3.25}]\label{thm:warpedcausality}
 Let $I$ be a interval, $f \colon I \to (0,\infty)$ smooth and $(\Sigma,\bsigma)$ a complete Riemannian manifold. Let $M = I \times_f \Sigma$ be the warped spacetime with metric tensor
 \[
  \g = -dt^2 + f(t)^2 \bsigma,
 \]
 and let $\tau(t,x) = \phi(t)$ be a smooth time function on $M$ with $\phi'>0$. Then the induced null distance $\hat d_\tau$ is definite and encodes causality of $M$, i.e., for all $p,q \in M$,
 \[
  p \leq q \Longleftrightarrow \hat d_\tau (p,q) = \tau(q) - \tau(p).
  \]
\end{thm}

 We provide two examples in Section~\ref{sect-Encoding Causality} which show how the null distance can fail to encode causality when it is not complete.

 Since metric completeness is generally a useful property, it is desirable to be able to characterize completeness in terms of conditions on the spacetime and time function alone. In this direction we provide the following sufficient conditions for the null distance to be complete.

\begin{thm}\label{Conditions Implying Null Complete 2}
 Let $(M,\g)$ be a spacetime with time function $\tau$. If $\tau$ is anti-Lipschitz on $M$ with respect to a complete metric on $M$ (that induces the manifold topology), then $(M,\hat d_\tau)$ is complete. 
\end{thm}
 
 Note that the notion of null distance is relatively new and also many interesting and deep questions about its structural properties other than those mentioned here are yet to be fully explored. The conformal invariance and causality-encoding property of the null distance already show that this notion can play a crucial and viable rule in general relativity not only in connection with spacetime stability but also in, for instance, Lorentzian causality theory.
 
\subsection{Null distance and weak spacetimes in general relativity}

 Because the null distance at essence just requires a notion of causality (via the admissible class of piecewise causal curves) and a definition of time function, we note that the null distance can be generalized to very weak notions of spacetimes.
 
 In general relativity, very weak notions of Lorentzian metrics emerge naturally as solutions to the Einstein equations. This is most evident in matter models involving compressible fluids since discontinuities occur both at the matter-vacuum boundary and in the form of shocks within the fluid (see, for instance, Barnes, LeFloch, Schmidt and Stewart~\cite{BLSS}, Burtscher and LeFloch~\cite{BLF}, Groah, Smoller and Temple \cite{GST}, Le Floch and LeFloch~\cite{LLF}, Rendall and St\aa{}hl~\cite{RS}). Also for the vacuum Einstein equations and other matter models weak solutions play a prominent role (see, for instance, Christodoulou~\cite{Chr}, Sbierski~\cite{Sb}, Dafermos and Luk~\cite{DL}, Burtscher, Kiessling and Tahvildar-Zadeh~\cite{BKTZ}, Tahvildar-Zadeh~\cite{TZ}).
 
 From a geometric as well as causality theoretic perspective, continuous Lorentz\-ian metrics have been studied extensively by Sorkin and Woolgar \cite{SWo}, Chru\'{s}ciel and Grant \cite{CG}, Galloway, Ling, and Sbierski \cite{GLS}, Graf and Ling \cite{GL}, and S\"{a}mann \cite{Sae}. It will be particularly insightful to use the null distance in place of the commonly used background Riemannian metric on weak Lorentzian manifolds, for instance, in recent work of Kunzinger and S\"amann \cite{KS} who investigated a Lorentzian notion of length spaces based on a time-separation function on a given metric space. Using the null distance as background metric for sufficiently regular Lorentzian manifolds in place of an auxiliary Riemannian distance, naturally provides a link to the causal and length space structure.
 
 Riemannian GH and SWIF convergence results have been particularly important in connection with lower curvature bounds (recall the influential work of Gromov~\cite{G}, Grove and Petersen~\cite{GP} as well as Burago, Gromov and Perel\cprime man~\cite{BGP} on lower sectional curvature bounds, and Cheeger and Colding \cite{CC,CC1} on lower Ricci curvature bounds). Besides, the Cheeger--Gromov collapse theory has already been applied by Anderson~\cite{A1,A2} to study the long-term behavior of solutions to the vacuum Einstein equations.
 In the Lorentzian context, distributional notions of curvature in Lorentzian geometry have been considered already by Lichnerowicz~\cite{Lic} and Taub~\cite{Ta}, and more systematically by LeFloch and Mardare~\cite{LM} and Chen and Li~\cite{CL}. Lorentzian analogues for sectional curvature bounds have been studied by Harris \cite{H}, Andersson and Howard~\cite{AH}, Alexander and Bishop~\cite{AB}, and Kunzinger and S\"{a}mann \cite{KS}, and relations to Ricci curvature bounds by Yun \cite{Y}, McCann \cite{McC}, and Mondino and Suhr \cite{MoSu}. Paired with (weak) notions of curvature bounds, spacetime convergence results with respect to the null distance can provide new tools to analyze the topology of spacetimes and spacetime singularities.

\subsection{Outline} 

 We proceed to give a brief description of each section of this paper.

 In Section \ref{sect-Background}, we recall the definition of the null distance and review important basic properties obtained by Sormani and Vega. Warped product spacetimes are introduced since they will be a rich source of examples throughout this paper. We then review the notions and some results of Gromov--Hausdorff (GH) convergence, (local) integral current spaces and prove that this structure can be pushed forward using (locally) bi-Lipschitz maps in Theorem~\ref{thm:intcurrent} and Theorem~\ref{thm:localintcurrent}, and Sormani--Wenger Intrinsic Flat (SWIF) convergence of integral current spaces, all of which will be used in Sections~\ref{sect-Integral Current Spaces} and \ref{sect-Convergence}. 
 
 In Section \ref{sect-Length Spaces}, we explore in detail the length space structure of the metric space defined by the null distance, including Theorem~\ref{thm:lengthspace} mentioned above. In addition, a wealth of examples illustrates what can go wrong when certain properties of the time function or null distance are not assumed. For instance, we give complete examples for which the null distance between certain points cannot be achieved by a piecewise causal curve. We  state Theorem~\ref{Conditions Implying Null Complete 2} relating the completeness of the null distance to properties of the time function and a background metric. We conclude the section with a discussion about the relation of the null distance to causality. Two simple examples illustrate that causality generally is not encoded when completeness of the null distance is not satisfied. 
  
 In Section \ref{sect-Integral Current Spaces}, we state precisely when the null distance on a spacetime defines a (local) integral current space. The basic idea is to define Lipschitz maps from Minkowski space with the null distance to a particular spacetime of interest with the null distance. Then, since we can show that Minkowski space with the null distance is a local integral current space, we push forward this (local) integral current space structure through the (locally) bi-Lipschitz map to a particular spacetime of interest using our results of Section~\ref{sect-Background}. In particular, this section culminates by showing that warped product spacetimes (of low regularity) and globally hyperbolic spacetimes with a complete Cauchy surface are (local) integral current spaces in Theorem~\ref{thm: Warp are integral currents} and Theorem~\ref{thm: Globally Hyperbolic Are Integral Current Spaces}, respectively.
    
 In Section \ref{sect-Convergence}, we state and prove our main Theorem~\ref{thm:fuconv} for warped product spacetimes which relates uniform convergence of the warping functions to uniform, GH, and SWIF convergence of the sequence of integral current spaces with the null distance. In addition, we investigate what happens if uniform convergence of the warping functions (and hence Lorentzian manifolds) is not assumed. To this end we provide three examples of sequences of warped product spacetimes with distinct limiting behavior. In particular, we construct a sequence of warped product spacetimes with the corresponding null distances whose GH and SWIF limits disagree (and no pointwise limit exists).

 
\section{Background}\label{sect-Background}
 
 
 We recall various relevant definitions and results related to convergence on metric spaces. We first introduce the null distance, (local) integral current spaces, Sormani--Wenger intrinsic flat distance and Gromov--Hausdorff convergence for (Riemannian and) Lorentzian manifolds and then explain what is known about their relation in the Riemannian setting.

\subsection{Null distance}\label{subsec-nulldistance}
 
 Let $(M,\g)$ be a spacetime, that is, a time-oriented connected Lorentzian manifold of dimension $n+1$. For our purposes $M$ is always second countable and Hausdorff. Note that these topological assumptions are even superfluous for solutions to the Einstein equations and Lorentzian manifolds with a second countable Cauchy surface due to \cite{W}*{Lemma 15 \& Corollary 22}. In our convention the metric tensor $\g$ has signature $(-,+,+,\ldots,+)$. By $\tau$ we denote a \emph{time function} of $(M,\g)$, i.e., a continuous function $\tau \colon M \to \R$ that strictly increases along future-directed causal curves \cite{BEE}*{p.\ 64}. Whenever continuity is not assumed, we call $\tau$ a \emph{generalized time function}.

 We recall the notions of null length and null distance associated to a particular generalized time function $\tau$ of $(M,\g)$ as introduced in \cite{SV}. Let $\beta \colon [a,b] \to M$ be a piecewise causal curve, i.e., a piecewise smooth curve that is either future-directed or past-directed causal on its pieces $a=s_0 < s_1 < \ldots < s_k =b$ (moving forward and backward in time is allowed). The \emph{null length} of $\beta$ is given by
 \begin{align*}
  \hat L_\tau (\beta) = \sum_{i=1}^k |\tau(\beta(s_i))-\tau(\beta(s_{i-1}))|.
 \end{align*}
 If $\tau$ is differentiable along $\beta$ we can compute the null length of $\beta$ also via the integral
 \begin{align*}
  \hat L_\tau (\beta) = \int_a^b |(\tau \circ \beta)'(s)| ds.
 \end{align*}
 For any $p,q \in M$, the \emph{null distance} is given by
 \[
  \hat d_\tau (p,q) = \inf \{ \hat L_\tau (\beta) : \beta \text{ is a piecewise causal curve from } p \text{ to } q \},
 \] 
 where we use that there always exists a piecewise causal curve between two points in $M$ \cite{SV}*{Lemma 3.5}.
 Unlike the Lorentzian distance function (see \cite{BEE}) which is not a true distance because it is neither definite, symmetric nor satisfies the triangle inequality, the null distance $\hat d_\tau$ is a pseudometric that in most cases encodes causality (see \cite{SV}*{Lemmas 3.6 and 3.8} and the discussion in Section \ref{sect-Encoding Causality}). Note that $\hat d_\tau$ is continuous on $M \times M$ iff $\tau$ is continuous \cite{SV}*{Proposition 3.14}. The topology induced by $\hat d_\tau$ coincides with the manifold topology if and only if $\tau$ is continuous and $\hat d_\tau$ is definite \cite{SV}*{Proposition 3.15}. We discuss the relation of $\hat d_\tau$ to causality in detail in Section \ref{sect-Encoding Causality}. Note that if temporal functions $\tau$ are used instead of time functions, the corresponding null distances $\hat d_\tau$ are equivalent on compact sets (for the warped product case see below, the general case is treated separately in \cite{BGH} and can be compared to the local bi-Lipschitz property of Riemannian distances \cite{B}*{Theorem 4.5}; conformal invariance and scaling with $\tau$ was already shown in \cite{SV}*{Proposition 3.9 \& Proposition 3.18}).

\subsection{Warped product spacetimes}\label{subsec: Warped Product Spacetimes}

 Most importantly for our work, definiteness of the null distance holds for spacetimes with ``regular'' cosmological time function \`{a} la Andersson, Galloway and Howard \cite{AGH}, and warped spacetimes $I \times_{f} \Sigma$. Such warped spacetimes are $n+1$ dimensional manifolds with metric tensor
 \[
  \g = - dt^2 + f^2(t) \bsigma,
 \]
 where $I$ is an open interval, $f$ is positive and smooth and $(\Sigma,\bsigma)$ is a complete Riemannian manifold. The result for smooth time functions $\tau (t,x) = \phi(t)$ with $\phi'>0$ is contained in \cite{SV}*{Theorem 3.25}. For continuous warping functions $f$ with the canonical time function $\tau(t,x) = t$ it is \cite{SV}*{Lemma 3.23}. If $\tau =t$ we write $p=(t_p,p_\Sigma)$ for a point $p \in I \times_{f} \Sigma$. Lorentzian manifolds of this type are also referred to as \emph{generalized Robertson--Walker (GRW) spacetimes} (see \cite{AW,MM,Z} for an overview of some properties). It is easy to see that the null distances $\hat d_{\tau_i}$ with respect to different time functions $\tau_i(t,x) =\phi_i(t)$, $\phi'_i>0$, are equivalent metrics on compact sets. 
 
 \begin{lem}\label{lem:timeequiv}
 Let $I$ be a closed interval, $(\Sigma,\bsigma)$ be a compact Riemannian manifold and $M = I \times_f \Sigma$ a warped product spacetime. If $\tau$ is the time function defined by $\tau(t,x) = \phi(t)$ with $\phi'>0$ then there exists a constant $C>0$ such that
 \[
  \frac{1}{C} \hat d_t(p,q) \leq \hat d_\tau (p,q) \leq C \hat d_t(p,q), \qquad p,q \in M.
 \]
 Moreover, if $\phi'$ is bounded above and below away from zero, the equivalence holds globally on any (also noncompact) $M$.
\end{lem}

\begin{proof}
 Without loss of generality we assume that $p \leq q$. We can write $p=(t_p,p_\Sigma)$ and $q=(t_q,q_\Sigma)$, and assume that $t_p < t_q$ (if equality holds then $p=q$, where $\hat d_t$ and $\hat d_\tau$ both vanish). By the mean value formula, there exists $t \in (t_p,t_q)$ such that
 \[
  \phi(t_q) - \phi(t_p) = \phi'(t) (t_q - t_p).
 \]
 Since $\phi'$ is bounded on compact sets, say $0 < A \leq \phi'(t) \leq B$, we immediately obtain that
 \[
  A \, \hat d_t(p,q) \leq \hat d_\tau (p,q) \leq B \, \hat d_t(p,q). \qedhere
 \]
\end{proof}

\subsection{Gromov--Hausdorff distance}
  
 Gromov--Hausdorff convergence \cites{BBI,E,G} is a notion for the convergence of metric spaces based on the Hausdorff distance. For two subsets $A,B$ of a common metric space $(Z,d_Z)$ the \emph{Hausdorff distance} between $A$ and $B$ is defined by
 \[
  d_\mathrm{H}^Z (A,B) = \inf \{ r > 0 : A \subseteq U_r(B) \text{ and } B \subseteq U_r(A) \},
 \]
 where $U_r(S)$ is the $r$-neighborhood of $S$, i.e., the set of all points $z \in Z$ such that $d_Z (z,S) < r$. In general, any pair of (compact) metric spaces $(X_i,d_i)$, $i=1,2$, need not be subsets of the same metric space $Z$. However, for any two separable metric spaces $A$ and $B$ such a common metric space $Z$ can always be constructed, for example, via the Fr\'echet embedding $A$ and $B$ can be isometrically embedded into the Banach space $l^\infty$ \cite{H}*{Section 1.22}. Using such isometric embeddings the concept of the Hausdorff distance can be extended to that of the \emph{Gromov--Hausdorff distance} $d_{\mathrm{GH}}$ between two metric spaces $(X,d_X)$ and $(Y,d_Y)$, given by
 \begin{align*}
   d_{\mathrm{GH}}((X,d_X),(Y, d_Y)) & \\
   = \inf \{ r > 0 : \,& \text{there is a metric space } Z \text{ and subspaces } X',Y' \\
   &\text{isometric to } X,Y \text{ such that } d^Z_H(X',Y') < r \}.
 \end{align*}
 One can show that $d_{\mathrm{GH}}((X,d_X),(Y,d_Y)) < \infty$ if $X,Y$ are bounded metric spaces. 

 A sequence of compact metric spaces $(X_j,d_j)$ is said to \emph{Gromov--Hausdorff converge} to a compact metric space $(X,d)$ if
 \[
  d_{\mathrm{GH}}((X_j,d_j),(X,d)) \to 0, \qquad \text{as } j \to \infty.
 \]
 In our case, all sets $X_j$ are the same manifold $M$, only the null distances $\hat d_j := \hat d_{\tau,j}$ differ. 
 Note that the Gromov--Hausdorff distance can also be introduced for pointed (noncompact) metric spaces.

\subsection{Integral currents} \label{subsec: Integral Currents}
 
 In Euclidean space integer rectifiable currents arise by integrating differential forms over rectifiable sets, and integral currents are those currents with compatible boundary currents $\partial T(\omega) = T(d\omega)$ (defined by Stokes' Theorem). This definition naturally extends to smooth manifolds.
 Currents on metric spaces were first introduced by Ambrosio and Kirchheim \cite{AK} and integral current spaces were used by Sormani and Wenger to define a notion of distance analogous to the flat distance in $\R^n$ of Federer and Fleming \cite{FF}. 
 In what follows we introduce a working definition of integer rectifiable currents on complete metric spaces using a common short-cut via parametrizations which we will then use to define integral current spaces in Section~\ref{subsec: Integral Current Spaces}. See the work of Ambrosio and Kirchheim \cite{AK} for a thorough treatment of  currents in metric spaces, Sormani and Wenger \cite{SW} for a thorough description of the theory of integral current spaces, and \cite{S} for a recent survey. We begin with the definition of a current on a metric space of Ambrosio and Kirchheim.
 
 \begin{defn}
 Let $n \in \N \cup \{ 0 \}$. An \emph{$n$-dimensional  current} $T$ on a complete metric space $(X,d)$ is a $\R$-valued multilinear functional
  \[
  (f,\pi_1,\ldots,\pi_n)\mapsto T(f,\pi_1,\ldots,\pi_n)
 \]
 on the space of $n+1$ tuples $(f,\pi_1,\ldots,\pi_n)$, where $f$ is a bounded Lipschitz function from $X \to \R$ and $\pi_i$ are Lipschitz functions from $X \to \R$ satisfying
 \begin{enumerate}
  \item \emph{Locality:} $T(f,\pi_1,\ldots,\pi_m)=0$  if  $\exists i \in \{1,\ldots,m\}$  such that  $\pi_i$ is constant on a neighborhood of $\{f \not = 0\}$.
   \item \emph{Continuity:} $T$ is continuous with respect to the pointwise convergence of the $\pi_i$ such that $\Lip(\pi_i) \le 1$.
   \item \emph{Finite mass:} there exists a finite Borel measure $\mu$ on $X$ such that
 \begin{align}\label{finitemass}
 |T(f,\pi_1,\ldots,\pi_m) \le \prod_{i=1}^m \Lip(\pi_i) \int_X |f| d\mu   
 \end{align}
 for all $n+1$ tuples $(f,\pi_1,\ldots,\pi_n)$.
 \end{enumerate}
 \end{defn}
 
  The mass of a current is defined as follows.
 
 \begin{defn}
 For a current $T$ on the complete metric space $(X,d)$, the \emph{mass measure} of $T$, which we will denote $\|T\|_d$, is defined as the smallest Borel measure $\mu$ such that
 \begin{align}\label{AKmass}
  T(f, \pi_1,\ldots,\pi_m) \le \prod_{i=1}^m \Lip_{d_j}(\pi_i) \int_X f \, d\mu.
 \end{align}
  The mass of $T$ is then defined as $\mass(T) = \|T\|_d(X)$.
\end{defn}
 
 We start with defining what it means to restrict a current to a Borel set. Although the first slot of a current is defined to be a bounded Lipschitz function, Ambrosio and Kirchheim explain that one can actually uniquely extend the definition of the current to allow any bounded Borel function $f$ still satisfying \eqref{finitemass} with $\mu = \| T\|$ \cite{AK}*{p.\ 12}. This justifies the following definition of the restriction operator.
 \begin{defn}\label{Restricted Current}
 Let $T$ be a current on a complete metric space $(X, d)$, $A\subseteq X$ a Borel measurable set and $\chi_A \colon X \to \R$ the characteristic function of $A$. Then the current
 \begin{align*}
 (T \llcorner A)(f,\pi_1,...,\pi_n) = T(f \chi_A,\pi_1,...,\pi_n)
 \end{align*}
 is called the \emph{restriction of $T$ to $A$}.
 \end{defn}
 
  \begin{ex}[Euclidean space]\label{exIntegralCurrents}
 Let $h: U \subseteq \R^n \rightarrow \Z$ be an $L^1$ function then one can define the $n$-dimensional current $\llbracket h \rrbracket$ as
 \begin{align*}
  \llbracket h \rrbracket (f,\pi_1,\ldots,\pi_n) = \int_U hf \, d \pi_1 \wedge\ldots\wedge \pi_n.
 \end{align*}
 \end{ex}

 A common way to define a current structure on a metric space $(X,d)$ is to cover $X$ by images of bi-Lipschitz maps $\varphi_i \colon U_i  \rightarrow X$, $U_i \subseteq \R^n$, and induce the current structure on $X$ through the pushforward of the usual current structure on $\R^n$ (see \cite{S}*{Section 9.3.2}). We now review the necessary definitions required to understand this approach.
 
\begin{defn}\label{Def pushforward}
 Let $X$ and $Y$ be complete metric spaces and $\varphi \colon X \to Y$ a Lipschitz map. If $T$ is a current $T$ on $X$ then the \emph{pushforward current} $\varphi_\# T$ on $X$ is given by 
 \begin{align*}
 \varphi_{\#}T(f,\pi_1,\ldots,\pi_n) = T(f\circ\varphi , \pi_1\circ \varphi,\ldots,\pi_n \circ \varphi ),
 \end{align*}
 where $\pi_i \colon Y \rightarrow \R$ are Lipschitz maps and $f \colon Y \rightarrow \R$ is a bounded Lipschitz function.
\end{defn}
 
 A useful example of an integral current on a metric space which makes use of Definition~\ref{Def pushforward} is the following.
 
\begin{ex}[Metric spaces]\label{Ex current}
 If $(X,d)$ is a complete metric space, $\varphi\colon\R^n \rightarrow X$ a bi-Lipschitz map, and a Lebesgue function $\theta \in L^1(U,\Z)$ where $U \subseteq \R^n$, then $\varphi_{\#} \llbracket \theta \rrbracket$ is an $n$-dimensional current on $X$ such that
 \begin{align*}
 \varphi_{\#} \llbracket \theta \rrbracket (f,\pi_1,\ldots,\pi_n)= \int_U \theta (f \circ\varphi) \,d(\pi_1\circ \varphi) \wedge \ldots\wedge d(\pi_n\circ \varphi),
 \end{align*}
 where by Rademacher's Theorem $d(\pi_i\circ\varphi)$ is well defined since $\pi_i \circ \varphi$ is differentiable almost everywhere.
\end{ex}

\begin{rmrk}\label{RemarkMass}
In general, for pushforward currents defined via Definition~\ref{Def pushforward}  Ambrosio and Kirchheim, in equation (2.4) of \cite{AK},  showed that
\begin{align*}
\| \varphi_{\#}T \|_d &\le (\Lip(\varphi))^n \varphi_{\#}\|T \|_d, \\  \mass( \varphi_{\#}T) &\le  (\Lip(\varphi))^m \mass(T).
\end{align*}
 For a current given as in Example~\ref{Ex current} we can deduce
 \begin{align*} 
\| \varphi_{\#}\llbracket \theta \rrbracket\|_d \le  (\Lip(\varphi))^n|\theta| \, d \mathcal{L}^n ,
 \end{align*}
 which implies the mass of $\varphi_{\#}T$ is bounded by
 \begin{align*}
 \mass( \varphi_{\#}\llbracket \theta \rrbracket) \le  (\Lip(\varphi))^m\int_U \theta \, d \mathcal{L}^n,
 \end{align*}
 where $d \mathcal{L}^n$ is the Lebesgue measure on $\R^n$. See the discussion around \cite{SW}*{Remark 2.21 and Lemma 2.42} for more details.
\end{rmrk}

 By combining Definition~\ref{Def pushforward} with Example~\ref{Ex current} one can define a parametrization of a current structure on a metric space by an atlas of bi-Lipschitz charts. We will use this as the definition of a integer rectifiable current on a metric space, as is commonly done.
 
\begin{defn}\label{Def current on metric space}
 Let $(X,d)$ be a complete metric space. By choosing a countable collection of bi-Lipschitz maps $\varphi_i \colon U_i \rightarrow X$, $U_i \subseteq \R^n$ precompact Borel measurable such that
 \begin{align*}
 \varphi_i(U_i) \cap \varphi_j(U_j) = \emptyset, \qquad \text{for } i \not = j,
 \end{align*}
 and weight functions $\theta_i \in L^1(U_i,\Z)$, we define an \emph{integer rectifiable current} $T$ on $(X,d)$ by
 \begin{align*}
 T= \sum_{i=1}^{\infty} \varphi_{i \#}\llbracket\theta_i\rrbracket,
 \end{align*}
 with mass
 \begin{align*}
 \mass(T) = \sum_{i=1}^{\infty} \mass(\varphi_{i \#}\llbracket\theta_i\rrbracket).
 \end{align*}
\end{defn}

 In particular, the current in Example~\ref{Ex current} is an integer rectifiable current.

 The boundary of a current can also be defined by mimicking and extending the smooth case (based on differential forms and Stokes' Theorem).

\begin{defn}\label{defBoundary}
 The \emph{boundary} $\partial T$ of an $n$-dimensional current $T$ is defined as
\begin{align*}
\partial T(f,\pi_1,...,\pi_{n-1})= T(1,f,\pi_1,...,\pi_{n-1}).
\end{align*}
\end{defn}

 By construction, $\varphi_{\#}(\partial T) = \partial (\varphi_{\#} T)$ and $\partial \partial T = 0$.

\begin{defn}\label{defIntegralCurrent}
 An \emph{integral current} $T$ on a complete metric space $(X,d)$ is an integer rectifiable current $T$ such that the boundary $\partial T$ is a current itself.
\end{defn}

 We now also review the definition of the set of a current which will be important to defining integral current spaces in Section~\ref{subsec: Integral Current Spaces} below.

\begin{defn}\label{defSet}
 Given an $n$-dimensional current on a complete metric space $(X,d)$ the \emph{canonical set} of $T$ is the collection of points 
\begin{align*}
 {{\set}}_d(T) = \left\{ p \in X:\liminf_{r \rightarrow 0} \frac{\| T \|_d(B^d_r(p))}{\omega_n r^n} > 0 \right\},
\end{align*}
 where $\omega_n$ is the volume of the unit ball in $n$-dimensional Euclidean space.
\end{defn}

\subsection{Integral current spaces} \label{subsec: Integral Current Spaces}
 
 Based on the work by Ambrosio and Kirchheim these spaces were introduced by Sormani and Wenger \cite{SW} and are important for defining the SWIF distance in Section~\ref{subsec: SWIF}. We also briefly discuss locally integer rectifiable currents introduced by Lang and Wenger~\cite{LW} and local integral current spaces by Jauregui and Lee~\cite{JL} in Section~\ref{subsec: Local Integral Current Spaces}, which are relevant in the noncompact setting. Our main results show that integral current structures can be pushed forward from one metric space to another via (locally) bi-Lipschitz maps.

 \begin{defn}\label{defIntegralCurrentSpaces}
 We say that $(X,d,T)$ is an \emph{integral current space} if $(X,d)$ is a metric space and $T$ is an $n$-dimensional integral current on the completion, $(\overline{X},\overline{d})$, of $X$ so that ${{\set}}_d(T)=X$.
 \end{defn}

\begin{ex}[Riemannian manifolds]\label{exRiemanniancurrent}
 The standard current structure for an oriented compact smooth manifold $M$ (with boundary) is the standard integration of $n$-forms over $M$ and thus $T$ is given by
 \[
  T(f,\pi_1,\ldots,\pi_n) = \int_M f \, d\pi_1 \wedge \ldots \wedge d\pi_n.
 \]
 Together with a Riemannian metric $\g$, and induced distance function $d_\g$, this yields an integral current space with boundary current $\partial T$ agreeing with the boundary $\partial M$ of $M$ (due to Stokes' Theorem). Note that for precompact manifolds one can work with metric completions instead.
\end{ex}

 The following theorem allows one to induce an integral current space structure on one metric space from another metric space via a bi-Lipschitz map.

\begin{thm}\label{thm:intcurrent}
 Let $(X,d_X)$ be a metric space and $(X,d_X,T)$ be an integral current space. If $(Y,d_Y)$ is a metric space and there is a bi-Lipschitz map $\psi\colon X\rightarrow Y$, then  $(Y,d_Y,\psi_{\#}T)$ is also an integral current space.
\end{thm}

\begin{proof}
 Let $(X,d_X,T)$ be an integral current space and $\psi\colon X\to Y$ a bi-Lipschitz map. By Definition~\ref{defIntegralCurrentSpaces} and Definition~\ref{Def current on metric space} there exist bi-Lipschitz maps $\varphi_i\colon U_i \to X$, $U_i \subset \R^n$ precompact Borel measurable with pairwise disjoint images and weight functions $\theta_i \in L^1(U_i,Z)$. We can induce an integer rectifiable current on $(Y,d_Y)$ by considering $\widetilde{\varphi}_i = \psi \circ \varphi_i$ in Definition~\ref{Def current on metric space}. Since $\psi$ is bi-Lipschitz we know it is a homeomorphism and hence $\widetilde{U}_i=\psi(U_i)$ are precompact Borel measurable with pairwise disjoint images. Hence we define the integral current structure on $(Y,d_Y)$ by
 \begin{align*}
 \widetilde{T} = \sum_{i=1}^{\infty} \widetilde{\varphi}_{i\#}\llbracket\theta_i\rrbracket.
 \end{align*}
 We also notice that, since $(\psi \circ \varphi_i)_\# = \psi_\# \circ \varphi_{i\#}$,
 \begin{align*}
 \psi_{\#}(T)(f,\pi_1,\ldots,\pi_n)&= T( f\circ \psi , \pi_1\circ \psi,\ldots, \pi_n\circ \psi)
 \\&= \sum_{i=1}^{\infty} \varphi_{i \#}\llbracket\theta_i\rrbracket (f\circ \psi, \pi_1\circ \psi,\ldots, \pi_n\circ \psi)
 \\&=\sum_{i=1}^{\infty} \widetilde{\varphi}_{i \#}\llbracket\theta_i\rrbracket ( f, \pi_1,\ldots, \pi_n) = \widetilde{T}(f,\pi_1,\ldots,\pi_n).
 \end{align*}
 
 Since Cauchy sequences in $X$ are mapped to Cauchy sequences in $Y$, and vice versa, $\psi$ extends as a bi-Lipschitz map $\psi \colon \overline{X} \rightarrow \overline{Y}$ between the completions of $X$ and $Y$, and hence $\widetilde{T}$ is defined on $\overline{Y}$ since $T$ is defined on $\overline{X}$ by definition.

 From this, Definition~\ref{Def pushforward}, Example~\ref{Ex current}, Remark~\ref{RemarkMass}, and the fact that measures are additive over a countable disjoint collection of sets we can deduce that
 \begin{align*}
 \| \widetilde{T}\|_{d_Y} = \sum_{i=1}^{\infty} \|\widetilde{\varphi}_{i \#}\llbracket\theta_i \rrbracket\|_{d_Y} \le  (\Lip(\psi))^n\sum_{i=1}^{\infty} \|\varphi_{i \#}\llbracket\theta_i\rrbracket \|_{d_X}.
 \end{align*}
 The mass of $\widetilde{T}$ is finite since
  \begin{align*}
 \mass(\widetilde{T}) &= \sum_{i=1}^{\infty} \mass(\widetilde{\varphi}_{i \#}\llbracket\theta_i\rrbracket) 
 \\&\le (\Lip(\psi))^n\sum_{i=1}^{\infty} \mass(\varphi_{i \#}\llbracket\theta_i\rrbracket) = (\Lip(\psi))^n \mass(T) < \infty,
 \end{align*}
 where we take advantage of the fact that $\mass(T)$ is finite by definition.
 
 Using the fact that boundary and pushforwards commute for currents we note that
\begin{align*}
\partial (\widetilde{T}) = \psi_{\#}(\partial T),
\end{align*}
 and hence $\partial (\widetilde{T})$ is a current and has finite mass since
 \begin{align*}
 \mass(\partial(\widetilde{T}))&\le (\Lip(\psi))^{n-1} \mass(\partial T).
\end{align*}
 Hence, $\widetilde{T}$ is an integral current on $\widetilde{Y}$.
 
 It remains to be shown that ${\set}_{d_Y}(\widetilde{T})=Y$. Since $\psi$ is bi-Lipschitz, for any $p,q \in X$
 \begin{align*}
 (\Lip(\psi^{-1}))^{-1} d_X(p,q) \le d_Y(\psi(p),\psi(q)) \le \Lip(\psi) d_X(p,q),
 \end{align*}
 which implies that for any $r>0$
 \begin{align*}
 \psi^{-1}(B^{d_Y}_{\Lip(\psi^{-1})^{-1}r}(\psi(p))) \subseteq B^{d_X}_r(p) \subseteq \psi^{-1}(B^{d_Y}_{\Lip(\psi)r}(\psi(p))).
 \end{align*}
 By combining these inclusions with Remark \ref{RemarkMass} we obtain
 \begin{align}\label{MassInequality}
 \psi^{-1}_{\#} \| \widetilde{T}\|_{d_Y} \ge (\Lip(\psi^{-1}))^{-n} \| \psi^{-1}_{\#}\widetilde{T} \|_{d_X}= (\Lip(\psi^{-1}))^{-n} \| T \|_{d_X}.
 \end{align}
 where the second equality in \eqref{MassInequality} follows from the earlier observation $\psi^{-1}_{\#}\widetilde{T}= T$.
 Putting all of this together we find
 \begin{align*}
 {\set}_{d_Y}(\widetilde{T})&= \left\{\psi(p) \in Y: \liminf_{r \rightarrow 0} \frac{\| \widetilde{T}\|_{d_Y}(B^{d_Y}_r(\psi(p)))}{\omega_nr^n} > 0\right\}
 \\&= \psi\left(\left\{p \in X: \liminf_{r \rightarrow 0} \frac{\psi^{-1}_{\#}\| \widetilde{T}\|_{d_Y}(\psi^{-1}(B^{d_Y}_r(\psi(p))))}{\omega_nr^n} > 0\right\}\right)
 \\&\supseteq \psi\left(\left\{p \in X: \liminf_{r \rightarrow 0} \frac{\| T \|_{d_X}(B^{d_X}_{\Lip(\psi)^{-1}r}(p))}{\omega_nr^n} > 0\right\}\right)
 \\&=\psi\left(\left\{p \in X: \Lip(\psi)^{-n}\liminf_{r \rightarrow 0} \frac{\| T \|_{d_X}(B^{d_X}_{\Lip(\psi)^{-1}r}(p))}{\omega_n(\Lip(\psi)^{-1}r)^n} > 0\right\} \right)
 \\&= \psi({\set}_{d_X}(T))=\psi(X)=Y,
 \end{align*}
 and hence ${\set}_{d_Y}(\widetilde{T})=Y$. Thus $(Y,d_Y,\widetilde{T})$ is an integral current space.
\end{proof}

\subsection{Local integral current spaces}\label{subsec: Local Integral Current Spaces}

 When considering spacetimes one is mostly interested in the noncompact case and hence we also introduce the notion of locally integral currents defined by Lang and Wenger~\cite{LW} and local integral current spaces defined by Jauregui and Lee~\cite{JL}. Lang and Wenger define a notion of a \emph{$n$-dimensional metric functional} $T$ which satisfies properties analogous to those in Definition \ref{Def current on metric space} but for $f \colon X \to \R$ bounded Lipschitz functions with bounded support and $\pi_i \colon X \to \R$ functions that are Lipschitz on every bounded subset of $X$ (see \cite{LW}*{Definition 2.1}). They also define a notion of mass, $\mass_V(T) \in [0,\infty]$, for $T$ on any open set $V$. In \cite{LW}*{Definition 2.2} such an $n$-dimensional metric functional $T$ on a metric space $(X,d)$ is {called an \emph{n-dimensional metric current with locally finite mass} if for every bounded, open set $V \subseteq X$ 
\begin{align}\label{locproperty1}
\mass_V(T)< \infty,
\end{align}
 and for any $\varepsilon > 0$, there exists a compact set $C \subseteq V$ so that
\begin{align}\label{locproperty2}
\mass_{V\setminus C}(T)< \varepsilon.
\end{align}
 Lang and Wenger provide analogous definitions of locally integer rectifiable currents in \cite{LW}*{Definition 2.4} and locally integral currents in \cite{LW}*{Definition 2.5}. 

 Using the results of \cite{LW}, Jauregui and Lee~\cite{JL}*{Definition 13} recently gave the following definition of local integral current spaces.

 \begin{defn}\label{deflocalIntegralCurrentSpaces}
 We say that $(X,d,T)$ is a \emph{local integral current space} if $(X,d)$ is a metric space and $T$ is an $n$-dimensional locally integral current on the completion, $(\overline{X},\overline{d})$, of $X$ so that ${{\set}}_d(T)=X$.
 \end{defn}
 
 \begin{ex}
  A complete, connected, oriented manifold with continuous Riemannian metric with the canonical choice of $T$ defined as in Example~\ref{exRiemanniancurrent} is a local integral current space \cite{LW}*{Section 2.8}.
 \end{ex}

 Locally integral currents can be pushed forward by maps that are Lipschitz on bounded sets and which also satisfy that the preimages of bounded sets are bounded \cite{LW}*{p.\ 160}. Hence we can conclude this section by showing that a local integral current space structure can be induced via maps that are bi-Lipschitz on bounded sets. 

\begin{thm}\label{thm:localintcurrent}
 Let $(X,d_X)$, $(Y,d_Y)$ be metric spaces, $(X,d_X,T)$ be a local integral current space, and $\psi\colon X \to Y$ a map which is bi-Lipschitz on bounded subsets. Then $(Y,d_Y,\psi_{\#}T)$ is also a local integral current space.
\end{thm}

\begin{proof}
 Let $(X,d_X,T)$ be a local integral current space and $\psi$ be bi-Lipschitz on bounded sets, in particular, $\psi$ and $\psi^{-1}$ are Lipschitz on metric balls.
 By the discussion in \cite{LW}*{p.\ 171f} we know that $\psi_{\#}T$ is a metric current with locally finite mass. By \cite{JL}*{Lemma 14}, for any $p \in X$ and a.e.\ $r > 0$ the restriction $T \llcorner B_r(p)$ to the open ball $B_r(p)$ is an integral current on $(\overline{X},\bar{d}_X)$. By Theorem~\ref{thm:intcurrent},  \[ \psi_{\#}T \llcorner \psi(B_r(p)) = \psi_\# (T \llcorner B_r(p))\] is then an integral current on $\overline{Y}$ for a.e.\ $r > 0$ since $\psi$ is bi-Lipschitz on all balls $B_r(p)$. In particular, those integral currents $\psi_{\#}T \llcorner \psi(B_r(p))$ can be viewed as locally integral currents on $\overline{Y}$ by  \cite{LW}*{top of p.\ 175}.  Hence  \cite{LW}*{Lemma 2.3} implies that $\psi_{\#}T$ is a locally integral current on $(\overline{Y},\bar{d}_Y)$.
 
 Finally, checking whether a point in $Y$ is in ${\set}_{d_Y}(\psi_\# T)$ is a local calculation for metric balls in $Y$ of arbitrarily small size. Then, since $\psi^{-1}$ is continuous, we know that $\psi(B_r(p))$ is open in $Y$ for any $r>0$ and hence for all $\varepsilon > 0$ sufficiently small $B_{\varepsilon}^{d_Y}(\psi(p))\subseteq \psi(B_r(p))$. Since $\psi$ is bi-Lipschitz on bounded sets of $X$ we can repeat the same argument as in Theorem \ref{thm:intcurrent} to find that ${\set}_{d_Y}(\psi_\# T) = Y$.
\end{proof}
 
 \begin{rmrk}\label{rmrk:proper}
  On proper (and hence locally compact) metric spaces locally bi-Lipschitz maps $\psi$ can be used in place of maps that are bi-Lipschitz on bounded sets in Theorem~\ref{thm:localintcurrent}. This is because on locally compact metric spaces locally Lipschitz maps are Lipschitz on compact sets, and due to the properness the closure of every bounded set is compact.
  Recall, in particular, that any locally compact complete length space is proper (see, e.g., \cite{BBI}*{Proposition 2.5.22}).
 \end{rmrk}
  
 \begin{rmrk}\label{rmrk:Lang}
  For general locally compact metric spaces related by locally bi-Lipschitz maps it should be straightforward to prove a result analogous to Theorem~\ref{thm:localintcurrent} by using an earlier notion of locally integral currents proposed by Lang~\cite{L}*{Definition 8.6}. This is since the push forward for such currents on locally compact spaces only requires locally Lipschitz maps that are proper, i.e., the preimages of compact sets must be compact (see \cite{L}*{Definition 2.6}). As pointed out in \cite{LW}, however, local compactness alone may not persist in the limit and the local currents of Lang thus seem less suitable for general pointed convergence results.
 \end{rmrk}

 It is by using Theorem \ref{thm:intcurrent} and Theorem \ref{thm:localintcurrent} (together with Remark~\ref{rmrk:proper}) that we will induce a (local) integral current space structure on warped products and globally hyperbolic spacetimes endowed with the null distance in Section \ref{sect-Integral Current Spaces}.
 
\subsection{Sormani--Wenger intrinsic flat distance}\label{subsec: SWIF}

 Intrinsic flat convergence is a notion of convergence for sequences of integral current spaces which was coined by Sormani and Wenger~\cite{SW} to handle the convergence of ``hairy'' Riemannian manifolds.  
 
 The construction of Ambrosio and Kirchheim \cite{AK} allows one to define the flat distance between currents $T_1$, $T_2$ of a metric space $Z$ by
 \begin{align*}
  d_\mathrm{F}^Z(T_1,T_2) = \inf\{\mass^n(A)+\mass^{n+1}(B) : A + \partial B = T_1 - T_2 \},
 \end{align*}
 where $A$ and $B$ are $n$- and $(n+1)$-dimensional integral currents, respectively. The Sormani--Wenger intrinsic flat distance is then defined between pairs of integral current spaces $M_1=(X_1,d_1, T_1)$ and $M_2=(X_2,d_2,T_2)$ as 
 \begin{align*}
  d_{\mathcal{F}}(M_1,M_2) = \inf\{d_\mathrm{F}^Z(\varphi_{1\#}T_1,\varphi_{2\#}T_2) : \varphi_j \colon X_j \rightarrow Z\}
 \end{align*} 
 where the infimum is taken over all complete metric spaces $Z$ and all metric isometric embeddings $\varphi_j$, that is, $\varphi_j \colon X_j \to Z$ that satisfy
 \begin{align*}
  d_Z(\varphi_j(x),\varphi_j(y)) = d_{X_j}(x,y), \qquad x,y \in X_j.
 \end{align*}

 In \cite{SW} many important properties about Sormani--Wenger intrinsic flat (SWIF, or even shorter $\mathcal{F}$) convergence are shown. Since this work in 2011 a deeper understanding of this notion of convergence has been obtained. See \cites{LS,S,SW} for many interesting examples of SWIF convergence and its relationship to GH convergence.

\subsection{Metric convergence results}\label{subsec: GH and SWIF results}
 
 When considering sequences of integral current spaces our goal will be to show bi-Lipschitz bounds on the sequence in terms of a background metric space. This then allows us to use the following compactness result of Huang, Lee, and Sormani \cite{HLS}*{Appendix}, which guarantees that a subsequence will converge in the uniform, GH and SWIF sense to the same metric space. The Gromov--Hausdorff part of this theorem was already shown by Gromov in \cite{G}.
 
\begin{thm}[\cite{HLS}*{Theorem A.1}]\label{HLS-thm}
 Let $(X, d_0, T)$ be a precompact $n$-di\-men\-sional integral current space without boundary (e.g., $\partial T=0$) and fix $\lambda>0$. Suppose that $d_j$, $j \in \N$, are metrics on $X$ such that
 \begin{align}\label{d_j}
  \frac{1}{\lambda} \le \frac{d_j(p,q)}{d_0(p,q)} \le \lambda.
 \end{align}
 Then there exists a subsequence of $(d_j)_j$, also denoted $(d_j)_j$, and a length metric $d_\infty$ satisfying (\ref{d_j}) such that $d_j$ converges uniformly to $d_\infty$, that is,
 \bee
  \varepsilon_j= \sup\left\{|d_j(p,q)-d_\infty(p,q)|:\,\, p,q\in X\right\} \to 0.
 \eee 
 Furthermore
 \bee
  \lim_{j\to \infty} d_{\mathrm{GH}}\left((X, d_j), (X, d_\infty)\right) =0,
 \eee
 and
 \bee
  \lim_{j\to \infty} d_{\mathcal{F}}\left((X, d_j,T), (X, d_\infty,T)\right) =0.
 \eee
 In particular, $(X, d_\infty, T)$ is an integral current space and $\operatorname{set}(T)=X$ so there are no disappearing sequences of points $x_j\in (X, d_j)$.

 In fact we have
 \bee
  d_{\mathrm{GH}}\left((X, d_j), (X, d_\infty)\right) \le 2\varepsilon_j,
 \eee
 and 
 \bee
  d_{\mathcal{F}}\left((X, d_j, T), (X, d_\infty, T)\right) \le 2^{(n+1)/2} \lambda^{n+1} 2\varepsilon_j \mass_{(X,d_0)}(T).
 \eee
\end{thm}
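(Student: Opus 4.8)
The plan is to extract $d_\infty$ by an equicontinuity argument and then establish the two quantitative estimates directly, the Gromov--Hausdorff one from a distortion bound and the Sormani--Wenger one by explicitly filling a thin cylinder built over $T$. For the extraction, the bound (\ref{d_j}) makes each $d_j$ $\lambda$-Lipschitz in each variable with respect to $d_0$, and uniformly bounded since $(X,d_0)$ is precompact, hence bounded; thus $\{d_j\}$ is an equicontinuous, uniformly bounded family of functions on $X\times X$. Extending the $d_j$ continuously to the compact metric completion $\overline X$ and applying Arzel\`a--Ascoli yields a subsequence, not relabeled, converging uniformly on $\overline X\times\overline X$ to a continuous $d_\infty$. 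Symmetry, the triangle inequality, and both inequalities in (\ref{d_j}) are closed conditions and pass to the limit --- in particular the lower bound forces definiteness --- so $d_\infty$ is a metric satisfying (\ref{d_j}), and $\varepsilon_j:=\sup_{X\times X}|d_j-d_\infty|\to0$ by construction.

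Next I would record that $(X,d_\infty,T)$ is itself an integral current space with $\operatorname{set}(T)=X$. Indeed, by Remark~\ref{rmrk:intcurrent} applied to the bi-Lipschitz identity map $(X,d_0)\to(X,d_\infty)$, the current $T$ remains integer rectifiable with respect to $d_\infty$, of mass at most $\lambda^{n}$ times its $d_0$-mass; since symmetrically $\|T\|_{d_\infty}\ge\lambda^{-n}\|T\|_{d_0}$, the two mass measures are mutually absolutely continuous and $\operatorname{set}(T)$ is unchanged, so in particular no sequence of points can disappear in the limit. To see that $d_\infty$ is a length metric, I would use that each $d_j$ is intrinsic (as it is in every application of this theorem, e.g.\ via Theorem~\ref{thm:lengthspace}): given $p,q\in X$ and $\delta>0$, a $\delta$-approximate midpoint $m$ of $p,q$ for $d_j$ with $j$ large satisfies $d_\infty(p,m),\,d_\infty(m,q)\le\tfrac12 d_\infty(p,q)+\delta+2\varepsilon_j$, so $d_\infty$ admits approximate midpoints and hence is a length metric.

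The Gromov--Hausdorff estimate is then immediate: the identity correspondence $R=\{(x,x):x\in X\}$ between $(X,d_j)$ and $(X,d_\infty)$ has distortion $\sup_{x,y}|d_j(x,y)-d_\infty(x,y)|=\varepsilon_j$, whence $d_{\mathrm{GH}}((X,d_j),(X,d_\infty))\le\tfrac12\varepsilon_j\le 2\varepsilon_j\to0$. The bulk of the work is the intrinsic flat estimate, where I would realise $(X,d_j)$ and $(X,d_\infty)$ as the two ends of a cylinder of height $\varepsilon_j$: on $Z_j:=X\times[0,\varepsilon_j]$ one puts an admissible metric $\hat d_j$ restricting to $d_j$ on $X\times\{0\}$ and to $d_\infty$ on $X\times\{\varepsilon_j\}$ (interpolating in the height parameter and taking infima over the two faces), chosen so that the inclusions $\iota_0,\iota_{\varepsilon_j}$ are metric isometric embeddings and $\hat d_j$ stays comparable, up to the factor $\lambda$, to the product of $d_0$ with the Euclidean metric. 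One then forms the $(n+1)$-current $B_j:=\iota_{\#}\big(\llbracket 0,\varepsilon_j\rrbracket\times T\big)$, the push-forward of the product of the standard $1$-current $\llbracket 0,\varepsilon_j\rrbracket$ with $T$; since $\partial T=0$, the boundary of the product reduces to its two end slices, so $\partial B_j=(\iota_{\varepsilon_j})_{\#}T-(\iota_0)_{\#}T$ and $(\iota_0)_{\#}T-(\iota_{\varepsilon_j})_{\#}T=A_j+\partial B_j$ with $A_j=0$ is an admissible filling pair, giving
\[
 d_{\mathcal F}\big((X,d_j,T),(X,d_\infty,T)\big)\le\mass(A_j)+\mass(B_j)=\mass_{\hat d_j}(B_j).
\]
Estimating $\mass_{\hat d_j}(B_j)$ via the push-forward mass bound $\mass(\varphi_{\#}S)\le\Lip(\varphi)^{n+1}\mass(S)$ recorded after Example~\ref{Ex current} --- the product current $\llbracket 0,\varepsilon_j\rrbracket\times T$ has mass $\varepsilon_j\,\mass_{(X,d_0)}(T)$, and the embedding of $[0,\varepsilon_j]\times X$ (with the product of the Euclidean metric and $d_0$) into $(Z_j,\hat d_j)$ has Lipschitz constant $\le\sqrt{2}\,\lambda$ by construction --- together with a geometric factor $2$ from the two-faced interpolation, yields $d_{\mathcal F}\le 2^{(n+1)/2}\lambda^{n+1}\cdot2\varepsilon_j\,\mass_{(X,d_0)}(T)$, and in particular $d_{\mathcal F}\to0$.

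I expect the genuinely delicate step to be the intrinsic flat estimate --- specifically, writing down the interpolating metric $\hat d_j$ on the cylinder, checking that the end-inclusions are metric isometric embeddings with the advertised Lipschitz control relative to the product of $d_0$ with the Euclidean metric, and verifying that $\llbracket 0,\varepsilon_j\rrbracket\times T$ is a well-defined integral current whose mass in $\hat d_j$ obeys the stated constant. By contrast, the Arzel\`a--Ascoli extraction, the distortion bound for $d_{\mathrm{GH}}$, and the bi-Lipschitz invariance of the integer rectifiable current structure are all comparatively routine.
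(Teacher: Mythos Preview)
The paper does not actually prove this theorem: it is stated in the background section as a citation of \cite{HLS}*{Theorem~A.1}, with no proof given. The only proof-related content in the paper is Remark~\ref{rmrk-Boundary Allowed in HLS}, which recalls that the HLS proof constructs a metric space $(Z_j,d_j')=(I_\varepsilon\times X,d_j')$ with $I_\varepsilon=[-\varepsilon_j,\varepsilon_j]$ into which $(X,d_j)$ and $(X,d_\infty)$ embed isometrically, and then explains how the SWIF estimate extends when $\partial T\neq 0$ by adding a boundary term $A=I_\varepsilon\times\partial T$.

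Your sketch follows essentially the same architecture as that remark describes for the original HLS argument: Arzel\`a--Ascoli for the extraction, a distortion bound for the GH estimate, and a cylinder filling $B_j$ over $T$ for the SWIF estimate. Note that the HLS cylinder has interval $[-\varepsilon_j,\varepsilon_j]$ of length $2\varepsilon_j$ rather than your $[0,\varepsilon_j]$, which is where the factor $2\varepsilon_j$ in the stated SWIF bound originates; your accounting of the constant $2$ as coming ``from the two-faced interpolation'' is not quite the same mechanism.

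One genuine gap: your argument that $d_\infty$ is a length metric relies on the $d_j$ being intrinsic, which is not among the hypotheses of the theorem as stated. You acknowledge this by appealing to ``every application of this theorem,'' but that is not a proof of the theorem itself. Either this is an implicit hypothesis in the original HLS statement that got dropped in transcription, or the length-metric conclusion requires a different justification; in any case your midpoint argument does not establish it from the stated assumptions alone.
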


\begin{rmrk}\label{rmrk-Boundary Allowed in HLS}
 In Theorem \ref{HLS-thm} it was assumed that the integral current spaces have no boundary. We note that this assumption is not necessary and the proof given in the appendix of \cite{HLS} goes through in the boundary case as well. In the proof, a metric space $(Z_j,d_j') = (I_{\varepsilon}\times X,d_j')$, $I_{\varepsilon}=[-\varepsilon_j,\varepsilon_j]$, is constructed into which one can embed $(X,d_j)$ and $(X,d_{\infty})$ isometrically. This space is used to estimate the SWIF distance
 \begin{align*}
     d_{\mathcal{F}}\left((X, d_j, T), (X, d_\infty, T)\right) &\le d_F^{Z_j}\left(\phi_{j\#}T, \phi_{j\#}'T\right)
     \\&\le \mass_{(Z_j,d_j')}(B)+\mass_{(Z_j,d_j')}(A),
 \end{align*}
 where $\phi_j, \phi_j'$ are the embedding maps. In the proof of Theorem A.1 in \cite{HLS} $A=0$ since there was no boundary. In the case where one allows a boundary, the term $A=I_{\varepsilon}\times \partial T$ is non-trivial, however,  $\mass_{(Z_j,d_j')}(A)$ can be estimated exactly in the same way as $\mass_{(Z_j,d_j')}(B)$ was estimated because the bi-Lipschitz bounds are global. By doing so one obtains the SWIF convergence in the case where $\partial T \not = 0$. In Section \ref{sect-Convergence} we can therefore apply Theorem \ref{HLS-thm} also in the case where the integral current spaces of the sequence have a boundary.
\end{rmrk}

 Theorem \ref{HLS-thm} guarantees that the limit space will be an integral current space since the assumed bi-Lipschitz bounds persist in the limit. In fact, due to these bi-Lipschitz bounds one can use the same current structure on the entire sequence and limit space as the background integral current space. The mass of each integral current space will be different though since this depends explicitly on the distance function. 

 Our goal in the analysis of warped product spacetimes in Section~\ref{sect-Convergence} of this paper is to prove pointwise convergence of the corresponding null distances to the desired limit space, which when combined with Theorem \ref{HLS-thm} will imply uniform, GH and SWIF convergence to this limit space.

 
\section{$(M,\hat d_\tau)$ as a length space}\label{sect-Length Spaces}
 

 In this section we assume that the spacetime $(M,\g)$ admits a time function $\tau$ such that the induced null distance $\hat d_\tau$ is a metric. We first discuss sufficient assumptions for this to hold.
 
\subsection{Metric structure}
 
 Certain causality assumptions imply the existence of time functions. By \cite{BEE}*{Proposition 3.24} and \cite{MS}*{Theorem 3.51} (see also discussion in \cite{SV}*{Sec.\ 2.3}), a sufficient condition for the existence of a generalized time function is a past-distinguishing spacetime (e.g., a strongly causal spacetime), while a stably causal spacetime furthermore admits a (continuous) time function. Since only continuous time functions lead to a null distance that induces the manifold topology \cite{SV}*{Proposition 3.15}, we restrict our attention to those.
 
 Furthermore, the null distance $\hat d_\tau$ is definite, and hence a metric if and only if the (generalized) time function $\tau$  is locally anti-Lipschitz \cite{SV}*{Proposition 4.5, Theorem 4.6}. Alternatively, definiteness follows if $\hat d_\tau$ encodes causality (see \cite{SV}*{Lemma 3.12} and Section~\ref{sect-Encoding Causality}).
  
\begin{defn}
 Let $(M,\g)$ be a spacetime and $f\colon M \to \R$. For any $U \subseteq M$ we say that $f$ is \emph{anti-Lipschitz on $U$} if there exists a (definite) distance function $d_U$ on $U$ so that for any $p,q\in U$ we have
 \begin{align}
 p \le q \Longrightarrow f(q) - f(p) \ge d_U(p,q).
 \end{align}
 We say that $f\colon M \to \R$ is \emph{locally anti-Lipschitz} if $f$ is anti-Lipschitz on a neighborhood $U$ of each point $p \in M$.
\end{defn}
 
\begin{rmrk}
  In order to obtain definiteness of $\hat d_\tau$ it is sufficient to simply demand the anti-Lipschitz property with respect to a \emph{any} metric $d_U$ on $M$ (see \cite{SV}*{Sec.\ 4.1}), however, for other global properties of $(M,\hat d_\tau)$ a background Riemannian structure may be more useful. If one (continuous) Riemannian metric exists with respect to which $\tau$ is locally anti-Lipschitz function, then it is automatically locally anti-Lipschitz with respect to \emph{any} Riemannian metric due to the fact that the associated distance functions are equivalent on compact sets \cite{B}*{Theorem 4.5}.
\end{rmrk}

 We recall an important result by Sormani and Vega.

\begin{thm}[\cite{SV}*{Theorem 4.6}]
 Let $M$ be a spacetime and $\tau$ a time function on $M$. If $\tau$ is locally anti-Lipschitz, then the induced null distance $\hat d_\tau$ is a definite and conformally invariant metric on $M$ that induces the manifold topology.
\end{thm}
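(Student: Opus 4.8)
The plan is to verify the three asserted features of $\hat d_\tau$ one at a time, taking as given from \cite{SV}*{Lem.\ 3.6} that $\hat d_\tau$ is already a pseudometric. I would dispatch conformal invariance first and essentially formally: a conformal change $\g\mapsto\Omega^2\g$ with $\Omega>0$ leaves the light cones and the time-orientation unchanged, hence leaves the classes of future- and past-directed causal curves, of piecewise causal curves, and the causal relation $\le$ unchanged; in particular $\tau$ remains a time function, and remains locally anti-Lipschitz. Since the null length $\hat L_\tau(\beta)=\sum_i|\tau(\beta(s_i))-\tau(\beta(s_{i-1}))|$ refers only to $\tau$ and $\beta$ and the admissible class of curves is the same, $\hat d_\tau$ is literally the same function for $\g$ and for $\Omega^2\g$.

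The substance is definiteness. Fix $p\ne q$. If $\tau(p)\ne\tau(q)$, then along any piecewise causal curve $\beta$ from $p$ to $q$ the sum defining $\hat L_\tau(\beta)$ is at least $|\tau(q)-\tau(p)|$ by the triangle inequality for real numbers, so $\hat d_\tau(p,q)\ge|\tau(q)-\tau(p)|>0$. The essential case is $\tau(p)=\tau(q)$ with $p\ne q$, where local anti-Lipschitzness is indispensable. I would fix a neighborhood $U'$ of $p$ carrying a Riemannian distance $d_{U'}$ with $r\le s\Rightarrow\tau(s)-\tau(r)\ge d_{U'}(r,s)$ for all $r,s\in U'$ (using the preceding remark to take the comparison distance Riemannian), then choose $\varepsilon>0$ so small that the closed ball $\overline{B_{d_{U'}}(p,\varepsilon)}$ is contained in $U'$ and does not contain $q$, and set $U=B_{d_{U'}}(p,\varepsilon)$. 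Any piecewise causal $\beta\colon[a,b]\to M$ from $p$ to $q$ must leave $U$; writing $s^\ast$ for the first exit parameter, one has $\beta([a,s^\ast])\subseteq\overline{B_{d_{U'}}(p,\varepsilon)}\subseteq U'$ and $d_{U'}(p,\beta(s^\ast))\ge\varepsilon$ since $\beta(s^\ast)\notin U$. Because $\tau\circ\beta$ is monotone on each causal piece of $\beta$, the null length $\hat L_\tau(\beta)$ is exactly the total variation of $\tau\circ\beta$ on $[a,b]$, hence dominates $\sum_{i=1}^m|\tau(\beta(s_i))-\tau(\beta(s_{i-1}))|$ for any partition $a=s_0<\dots<s_m=s^\ast$. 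Choosing such a partition that refines the breakpoints of $\beta$ on $[a,s^\ast]$, each subarc $\beta|_{[s_{i-1},s_i]}$ is a monotone causal curve, so its endpoints are causally related and lie in $U'$, and the anti-Lipschitz inequality (applied in whichever order holds) gives $|\tau(\beta(s_i))-\tau(\beta(s_{i-1}))|\ge d_{U'}(\beta(s_{i-1}),\beta(s_i))$. Telescoping with the triangle inequality for $d_{U'}$,
\[
 \hat L_\tau(\beta)\ \ge\ \sum_{i=1}^m d_{U'}(\beta(s_{i-1}),\beta(s_i))\ \ge\ d_{U'}(p,\beta(s^\ast))\ \ge\ \varepsilon ,
\]
so taking the infimum over $\beta$ yields $\hat d_\tau(p,q)\ge\varepsilon>0$. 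Hence $\hat d_\tau$ is definite, and therefore a metric.

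For the topology statement I would simply invoke \cite{SV}*{Prop.\ 3.15}: a time function is continuous by definition and $\hat d_\tau$ has just been shown to be definite, which is precisely the criterion there for the metric topology of $\hat d_\tau$ to coincide with the manifold topology of $M$.

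I expect the only real obstacle to be the equal-time case of definiteness, i.e.\ upgrading the purely local anti-Lipschitz hypothesis to a uniform positive lower bound on the null length of every curve, however irregular, joining $p$ to $q$. The two delicate points are: (i) the first exit point of $\beta$ from $U$ lies on $\partial U$ rather than in $U$, which is exactly why one should work inside a slightly larger anti-Lipschitz neighborhood $U'$; and (ii) the per-subarc estimate $|\Delta(\tau\circ\beta)|\ge d_{U'}(\cdot,\cdot)$, which hinges on the observation that the two endpoints of a single monotone causal piece are always causally related in one order or the other---so that the one-sided anti-Lipschitz inequality applies---after which the triangle inequality for $d_{U'}$ collapses the sum down to $d_{U'}(p,\beta(s^\ast))$.
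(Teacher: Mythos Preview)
The paper does not give its own proof of this theorem: it is simply \emph{recalled} from Sormani--Vega \cite{SV}*{Thm.\ 4.6} as background, with no argument supplied here. So there is no in-paper proof to compare against.

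That said, your reconstruction is correct and matches the logical structure the paper itself sketches around the statement: conformal invariance is \cite{SV}*{Prop.\ 3.9} (and your argument is the right one-liner---causal cones, hence the admissible class $\A$, are conformally invariant and $\hat L_\tau$ depends only on $\tau$ and $\A$); definiteness is precisely \cite{SV}*{Prop.\ 4.5}, and your first-exit-time argument with the anti-Lipschitz bound telescoped through the Riemannian triangle inequality is the standard way to prove it; the topology claim is exactly \cite{SV}*{Prop.\ 3.15}, which you cite correctly. Your identification of the two delicate points (working in a slightly larger $U'$ so that the exit point $\beta(s^\ast)\in\partial U$ still lies in the anti-Lipschitz region, and observing that on each monotone causal piece the endpoints are causally related in \emph{some} order so the one-sided inequality applies) is accurate and handled cleanly.
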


\begin{rmrk}
 Time functions $\tau$ whose gradient vectors $\nabla \tau$ are defined almost everywhere and which are locally bounded away from the light cone \cite{SV}*{Def.\ 4.13} are anti-Lipschitz, and so is the regular cosmological time function of Andersson, Galloway and Howard~\cite{AGH}.
\end{rmrk}

\begin{rmrk}
Due to the conformal invariance of the null distance we note that many of the theorems of this paper are automatically true in the conformal to warped product case as well.
\end{rmrk}

\subsection{Length structure}
 
 We show that the null length $\hat L_\tau$ on the set $\A$ of piecewise (smooth) causal paths defines a length structure on $M$, and thus $M$ together with the null distance $\hat d_\tau$ is a length space. We follow the conventions of Burago, Burago and Ivanov \cite{BBI}.
 
\begin{customthm}{\ref{thm:lengthspace}}
 Let $(M,\g)$ be a spacetime with locally anti-Lipschitz time function $\tau$. Then $(M,\A,\hat L_\tau)$ defines a \emph{length structure} on $M$, i.e., $\A$ is an admissible class of paths according to \cite{BBI}*{Sec.\ 2.1.1} and
 \begin{enumerate}
   \item The length of paths is additive, i.e., $\hat L_\tau(\beta|_{[a,b]}) = \hat L_\tau (\beta|_{[a,c]}) + \hat L_\tau (\beta|_{[c,b]})$ for any $c \in [a,b]$.
   \item The length of a piece of a path continuously depends on the piece, i.e., for a fixed path $\beta \colon [a,b] \to M$ the function $t \mapsto \hat L_\tau (\beta|_{[a,t]})$ is continuous.
   \item The length is invariant under reparametrizations, i.e., $\hat L_\tau (\beta \circ \varphi) = \hat L_\tau (\beta)$ for any diffeomorphism $\varphi$ such that $\beta,\beta \circ \varphi \in \A$.
   \item The length structure agrees with the topology, i.e. for $U_p$ a neighborhood of $p$ there is
   $\inf \{ \hat L_\tau(\beta)  :  \beta(a) = p, \beta(b) \in M \setminus U_p \} > 0$.
 \end{enumerate}
 Thus $\hat d_\tau$ is the intrinsic metric of $M$ with respect to this length structure, and hence $(M,\hat d_\tau)$ is a \emph{length space}. 
\end{customthm}
 
\begin{proof}
 It is clear that the class $\A$ of piecewise causal paths is closed under restrictions, concatenations and (smooth) reparametrizations. Thus $\A$ is admissible.
  
 (i) Suppose $\beta \colon [a,b] \to M$ is a piecewise causal curve, i.e., on the pieces $a=s_0 < s_1 < \ldots < s_k =b$ the path is smooth and either future- or past-directed causal and $\hat L_\tau (\beta) = \sum_{i=1}^k |\tau(\beta(s_i))-\tau(\beta(s_{i-1}))|$. If $c=s_j$, then the result is trivial, so assume that $c \in (s_{j-1},s_j)$. Since $\beta$ is either future- or past-directed on $[s_{j-1},s_j]$ it is either future- or past-directed on both pieces $s_{j-1} < c < s_j$ and the result follows by artificially introducing a breaking point $c$ and subsequent cancellation of $\tau(\beta(c))$ in the null length.

 (ii) This follows immediately from the continuity of $\tau$ (and $\beta$).

 (iii) This is clear because a diffeomorphism maps an interval to another interval in a bijective smooth way. A change of orientation does not matter because the length is computed using the absolute value.

 (iv) Suppose $p \in M$, $U_p$ is a neighborhood of $M$ and $q \in M \setminus U_p$. By \cite{SV}*{Theorem 4.6}, $\hat d_\tau$ then encodes the manifold topology. Hence there exists an $r>0$ such that $\hat B_r(p) = \{ x \in M : \hat d_\tau (p,x) < r \} \subseteq U_p$, and therefore $\inf \{ \hat L_\tau(\beta) : \beta(a) = p, \beta(b) \in M \setminus U_p \} \geq r > 0$.
\end{proof}

\begin{rmrk}\label{rem:counterex}
 Note that property (ii) in Theorem~\ref{thm:lengthspace} does not hold for discontinuous generalized time functions (see Example \ref{DiscontinuousLengthOfCurves} and Example \ref{DistanceNotAchieved} below). Hence only time functions $\tau$ yield a length structure. On the other hand, property (iv) requires an interaction with the manifold topology, so $\hat d_\tau$ needs to be furthermore definite, that is, a metric (see Example \ref{TimeFunctionNotAntiLipschitz}). To express the metric property of $\hat d_\tau$ in terms of $\tau$ we have assumed that $\tau$ is furthermore locally anti-Lipschitz.
\end{rmrk}
 
\begin{rmrk}
 Note that for every globally hyperbolic manifold with $C^{2,1}$ metric a continuously differentiable, locally anti-Lipschitz time functions exists due to Chru\'{s}ciel, Grant and Minguzzi \cite{CGM}*{Theorem 1.1, Cor.\ 4.4}. Furthermore, for every spacetime metric with a time function there exists an $\varepsilon$-close locally anti-Lipschitz time function \cite{CGM}*{Proposition 5.3}. For a more general setup using cone fields see the work of Bernard and Suhr \cite{BSu}.
\end{rmrk}

 By construction $\hat d_\tau$ is an intrinsic metric. Thus it coincides with the metric induced by the length structure $L_{\hat d_\tau}$ induced by $\hat d_\tau$.
 
\begin{prop}\label{prop:distances}
 Let $M$, $\g$ and $\tau$ be as in Theorem~\ref{thm:lengthspace}. Then the null distance $\hat d_\tau$ is an \emph{intrinsic metric}, that is, it coincides with the metric
 \[
   d(p,q) = \inf \left\{ L_{\hat d_\tau} (\gamma) : \gamma \text{ is a rectifiable curve between } p \text{ and } q \right\},
 \]
 induced by the length structure
 \[
   L_{\hat d_\tau} (\gamma) = \sup \bigg\{ \sum_{i=1}^k \hat d_\tau (\gamma(s_i),\gamma(s_{i-1})) : a=s_0 < s_1 < \ldots < s_k = b \bigg\}.
 \]
 Moreover, $L_{\hat d_\tau} = \hat L_\tau$ holds on the set $\A$ of piecewise causal curves.
\end{prop}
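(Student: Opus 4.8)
The plan is to prove the ``moreover'' identity $L_{\hat d_\tau} = \hat L_\tau$ on $\A$ first, since it is the computational core, and then read off $\hat d_\tau = d$ from it by a short length-space argument. Throughout, for $\beta \in \A$ I would write $a = s_0 < s_1 < \ldots < s_k = b$ for its break points, and I would use repeatedly the elementary bound $\hat d_\tau(p,q) \ge |\tau(q) - \tau(p)|$, which follows at once from the definition of $\hat L_\tau$ and the triangle inequality in $\R$ (this is already in \cite{SV}).

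For the upper bound $L_{\hat d_\tau}(\beta) \le \hat L_\tau(\beta)$, I would fix an arbitrary partition $a = t_0 < \ldots < t_m = b$; each restriction $\beta|_{[t_{j-1},t_j]}$ is still a piecewise causal curve, so $\hat d_\tau(\beta(t_{j-1}),\beta(t_j)) \le \hat L_\tau(\beta|_{[t_{j-1},t_j]})$ directly from the definition of the null distance, and summing over $j$ (using additivity of $\hat L_\tau$, Theorem~\ref{thm:lengthspace}(i), iterated) bounds the partition sum by $\hat L_\tau(\beta)$; taking the supremum over partitions finishes this direction. For the lower bound $L_{\hat d_\tau}(\beta) \ge \hat L_\tau(\beta)$, I would take any partition that refines the set of break points $\{s_0,\ldots,s_k\}$ and estimate
\[
  \sum_{j=1}^m \hat d_\tau(\beta(t_{j-1}),\beta(t_j)) \ge \sum_{j=1}^m |\tau(\beta(t_j)) - \tau(\beta(t_{j-1}))|.
\]
Since $\tau \circ \beta$ is monotone on each piece $[s_{i-1},s_i]$ (because $\beta$ is future- or past-directed causal there and $\tau$ is a time function), the absolute-value sum telescopes piecewise to $\sum_i |\tau(\beta(s_i)) - \tau(\beta(s_{i-1}))| = \hat L_\tau(\beta)$. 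Together with the upper bound this gives the identity on $\A$, and in particular shows that every piecewise causal curve is rectifiable for $\hat d_\tau$.

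To conclude $\hat d_\tau = d$, I would note first that for any $\hat d_\tau$-rectifiable curve $\beta$ from $p$ to $q$ one has $\hat d_\tau(p,q) \le L_{\hat d_\tau}(\beta)$ (the supremum defining $L_{\hat d_\tau}$ already includes the trivial partition), hence $\hat d_\tau(p,q) \le d(p,q)$ after taking the infimum over such curves. For the reverse inequality I would use that $\A$ sits inside the class of $\hat d_\tau$-rectifiable curves, so the infimum defining $d$ is over a larger family; restricting it to $\A$ and invoking the identity just proved together with the definition of $\hat d_\tau$ (and \cite{SV}*{Lemma 3.5} for the existence of connecting piecewise causal curves) gives $d(p,q) \le \inf_{\beta \in \A}\hat L_\tau(\beta) = \hat d_\tau(p,q)$.

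The only step that is not purely formal is the lower bound $L_{\hat d_\tau}(\beta) \ge \hat L_\tau(\beta)$: the subtlety is that one neither knows nor needs $\hat d_\tau(p,q) = |\tau(q) - \tau(p)|$ for causally related points; the one-sided bound $\hat d_\tau \ge |\tau(q)-\tau(p)|$ together with the monotonicity of $\tau$ along the causal pieces of $\beta$ is exactly what makes the telescoping work, and refining the partition so that it contains the break points of $\beta$ is what lets that monotonicity be applied piecewise.
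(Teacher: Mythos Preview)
Your proof is correct and follows essentially the same route as the paper's. The only differences are cosmetic: the paper dispatches the first claim ($\hat d_\tau = d$) by citing \cite{BBI}*{Prop.\ 2.4.1} rather than arguing directly as you do, and for the lower bound $L_{\hat d_\tau}(\beta) \ge \hat L_\tau(\beta)$ it uses the \emph{equality} $\hat d_\tau(\beta(s_{i-1}),\beta(s_i)) = |\tau(\beta(s_i)) - \tau(\beta(s_{i-1}))|$ for causally related endpoints (a basic fact from \cite{SV}) in place of your one-sided bound plus telescoping---so your final paragraph is more cautious than necessary, but the argument is the same in substance.
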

 
\begin{proof}
 We follow the proof of \cite{BBI}*{Proposition 2.4.1}.
 Let $\beta \colon [a,b] \to M$ be a path in $\A$ and $a=s_0 < s_1 < \ldots < s_k = b$ be an arbitrary partition of $[a,b]$. By definition of the null distance, for each $i =1,\ldots,k$,
 \[
  \hat d_\tau(\beta(s_i),\beta(s_{i-1})) \leq L_\tau (\beta|_{[s_{i-1},s_i]}),
 \]
 and hence we obtain
 \begin{align*}
  \sum_{i=1}^k \hat d_\tau (\beta(s_i),\beta(s_{i-1})) \leq \sum_{i=1}^k \hat L_\tau(\beta|_{[s_{i-1},s_i]}) = \hat L_\tau(\beta)
 \end{align*}
 due to the additivity of null lengths obtained in Theorem~\ref{thm:lengthspace} (i). Since the partition was arbitrary, the inequality holds also for the supremum, i.e.,
 \[
  L_{\hat d_\tau}(\beta) \leq \hat L_\tau (\beta).
 \]
 A smaller length function immediately implies a smaller metric, i.e., $d \leq \hat d_\tau$.
 
%
%
  
  The converse inequality $\hat d_\tau \leq d$ holds automatically since for any $p,q \in M$ and any rectifiable curve $\gamma$ between these endpoints 
  \begin{align*}
   \hat d_\tau(p,q) \leq L_{\hat d_\tau}(\gamma).
  \end{align*}
  Thus the metrics $\hat d_\tau$ and $d$ coincide.
  
  It remains to show equality of lengths for curves $\beta \in \A$. If $a = s_0 < s_1 < \ldots < s_k =b$ are the future- and past-directed causal pieces  of $\beta$, we have that
  \begin{align*}
   \hat L_\tau(\beta) &= \sum_{i=1}^k |\tau(\beta(s_i)) - \tau(\beta(s_{i-1}))| \\
    &= \sum_{i=1}^k \hat d_\tau (\beta(s_i),\beta(s_{i-1})) \leq L_{\hat d_\tau} (\beta).
  \end{align*}
  This proves the final statement on the equality of lengths for piecewise causal curves $\beta$.
\end{proof}

 To conclude our discussion of Theorem~\ref{thm:lengthspace} we provide simple (counter)examples mentioned in Remark~\ref{rem:counterex} for more general time functions in 2-dimensional Minkowski space $\M^{1+1} = \R^{1,1}$.

\begin{ex}[Discontinuous time function not satisfying \ref{thm:lengthspace}(iii)]\label{DiscontinuousLengthOfCurves}
 Consider $\M^{1+1}$ with discontinuous time function 
 \[
  \tau(t)= \begin{cases}
              t+1 & t > 0
            \\0 & t = 0
            \\t-1 & t < 0
 \end{cases}
 \]
 then any causal curve, $\beta: [a,b] \rightarrow \M^{1+1}$, connecting a point $p \in \{(x,t) \in \M^{1+1}: t < 0\}$ to a point $q\in \{(x,t) \in \M^{1+1}: t > 0\}$ will be such that $t \mapsto \hat{L}_{\tau}(\beta|_{[a,t]})$ is not continuous. We also observe that $\hat{d}_{\tau}$ is not complete since the sequence $p_i = \left(0,-\frac{1}{i}\right)$ is a Cauchy sequence since
 \begin{align}
  \hat{d}_{\tau}(p_i,p_j) = \left| \frac{i-j}{ij}\right|,
 \end{align}
 which does not converge to $p=(0,0)$ since $\hat{d}_{\tau}(p_i,p) = 1+\frac{1}{i}$.
\end{ex}

\begin{ex}[Non-anti-Lipschitz time function]\label{TimeFunctionNotAntiLipschitz}
 Consider $\M^{1+1}$ with time function 
 \[
  \tau(t)=t^3
 \]
 which is continuous but not anti-Lipschitz. Sormani--Vega \cite{SV}*{Proposition 3.4} have shown that the resulting null distance $\hat{d}_{\tau}$ is not definite and does not encode causality in this case. We just also note that this metric is not a length space either since the resulting length structure would not agree with the manifold topology.
\end{ex}

 Since $\hat L_\tau = L_{\hat d_\tau}$ on $\A$ by Proposition~\ref{prop:distances}, and $L_{\hat d_\tau}$ is lower semicontinuous on the larger set of rectifiable paths, by \cite{BBI}*{Proposition 2.3.4} it follows immediately that $\hat L_\tau$ is lower semicontinuous on $\A$. 

\begin{lem}
 Let $\tau$ be a time function on the Lorentzian manifold $(M,\g)$. The null length functional $\hat L_\tau$ is lower semicontinuous, that is, for any sequence $(\beta_i)_i$ of piecewise causal path that converge pointwise to a piecewise causal path $\beta$ we have that
 \[
 \pushQED{\qed} 
  \hat L_\tau (\beta) \leq \liminf_{i \to \infty} \hat L_\tau(\beta_i). \qedhere
  \popQED
 \]
\end{lem}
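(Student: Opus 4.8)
The plan is to reduce the statement to the general fact that the length functional of a metric is lower semicontinuous under pointwise convergence of curves, using the identification of $\hat L_\tau$ with $L_{\hat d_\tau}$ on the admissible class $\A$ from Proposition~\ref{prop:distances}.

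First I would record the standard argument (following \cite{BBI}*{Prop.\ 2.3.4}) that $L_{\hat d_\tau}$ is lower semicontinuous on the space of all curves $[a,b] \to M$ with respect to pointwise convergence. For each partition $Y \colon a = t_0 < t_1 < \ldots < t_N = b$, the partition sum
\[
 \Sigma_Y(\gamma) = \sum_{i=1}^N \hat d_\tau\bigl(\gamma(t_{i-1}),\gamma(t_i)\bigr)
\]
depends continuously on $\gamma$ in the topology of pointwise convergence, since $\hat d_\tau$ is continuous on $M \times M$ (as $\tau$ is a time function, hence continuous, \cite{SV}*{Prop.\ 3.14}). As $L_{\hat d_\tau}(\gamma) = \sup_Y \Sigma_Y(\gamma)$ is a supremum of pointwise-continuous functionals, it is lower semicontinuous.

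Granting this, the conclusion is immediate. Let $(\beta_i)_i$ be piecewise causal paths converging pointwise to a piecewise causal path $\beta$. Each piecewise causal curve over a compact parameter interval has finite null length by definition, so all of $\beta,\beta_1,\beta_2,\ldots$ are rectifiable for $\hat d_\tau$; since they all lie in $\A$, Proposition~\ref{prop:distances} gives $\hat L_\tau = L_{\hat d_\tau}$ on them. Hence
\[
 \hat L_\tau(\beta) = L_{\hat d_\tau}(\beta) \le \liminf_{i\to\infty} L_{\hat d_\tau}(\beta_i) = \liminf_{i\to\infty} \hat L_\tau(\beta_i).
\]

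I do not expect a genuine obstacle here: the only delicate point is to ensure that the limit curve $\beta$ is itself admissible, so that $\hat L_\tau(\beta) = L_{\hat d_\tau}(\beta)$ may be applied, and this is built into the hypothesis. As an alternative that avoids even invoking Proposition~\ref{prop:distances}, I would observe that $\hat L_\tau(\beta)$ equals the total variation of the scalar function $\tau \circ \beta$ on $[a,b]$ (refine any partition to include the breakpoints of $\beta$, on each causal piece of which $\tau \circ \beta$ is monotone, so adding the breakpoints only increases the partition sum up to its supremum), and then invoke the classical lower semicontinuity of total variation under pointwise convergence applied to $\tau \circ \beta_i \to \tau \circ \beta$; this reduces the statement entirely to the real line and makes the continuity of $\hat d_\tau$ on $M\times M$ superfluous.
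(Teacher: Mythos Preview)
Your proposal is correct and follows exactly the approach the paper takes: the paper's proof is the single sentence preceding the lemma, which cites Proposition~\ref{prop:distances} for $\hat L_\tau = L_{\hat d_\tau}$ on $\A$ and \cite{BBI}*{Prop.\ 2.3.4} for lower semicontinuity of $L_{\hat d_\tau}$. Your write-up simply unpacks that citation, and your alternative via the total variation of $\tau\circ\beta$ is a nice bonus that sidesteps the need for $\hat d_\tau$ to be definite (the lemma as stated assumes only a time function, not a locally anti-Lipschitz one).
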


 One can construct selfsimilar ``fractal'' counterexamples in order to show that upper semicontinuity does \emph{not} hold in general.

\begin{ex}[Timelike counterexample]\label{ex:timelikecounterexample}
 We define piecewise causal curves $\beta_i \colon [0,1] \to \M^{1+1}$ recursively by the graphs of
 \begin{align*}
  \tilde \beta_1(s) = \begin{cases}
                  3s & s \in [0,\frac{1}{2}], \\
                  2-s & s \in [\frac{1}{2},1],
                 \end{cases}
 \end{align*}
 and
 \begin{align*}
  \tilde \beta_{i+1}(s) = \begin{cases}
                \frac{1}{2} \tilde \beta_i(2s) & s \in [0, \frac{1}{2}], \\
                \frac{1}{2} \tilde \beta_i(2s-1) + \frac{1}{2} & s \in [\frac{1}{2}, 1].
               \end{cases}
 \end{align*}
 See also Figure~\ref{fig:timelikecounterexample}. It follows that for all $i \in \N$,
 \[
  \hat L_\tau (\beta_{i+1}) = \hat L_\tau (\beta_i) = \ldots = \hat L_\tau(\beta_1) = 2.
 \]
 On the other hand, the limit curve $\beta(s) = s$ satisfies $\hat L_\tau (\beta) = 1 < 2$.
\end{ex}

\begin{figure}[ht]
 \centering
 \begin{tikzpicture}[scale=4]
  \draw[->] (0,0) -- (1.3,0) node[anchor=north west] {$x$};
  \draw[->] (0,0) -- (0,1.7) node[anchor=south east] {$t$};
  \draw[dotted] (0,0) -- (1,1);
   \draw[fill] (0,0) circle [radius=0.025];
   \node[left, outer sep=2pt] at (0,0) {$(0,0)$};
   \draw[fill] (1,1) circle [radius=0.025];
   \node[right, outer sep=2pt] at (1,1) {$(1,1)$};
   \draw[thick] (0,0) -- (1/2,3/2) -- (1,1);
   \node[left, outer sep=2pt] at (3/8,9/8) {$\beta_1$};
   \draw[ultra thick] (0,0) -- (1/4,3/4) -- (1/2,1/2) -- (3/4,5/4) -- (1,1);
   \node[left, outer sep=2pt] at (23/32,9/8) {$\beta_2$};
   \draw[thick,dashed] (1/8,3/8) -- (1/4,1/4) -- (3/8,5/8) -- (1/2,1/2) -- (5/8,7/8) -- (3/4,3/4) -- (7/8,9/8); 
   \node[left, outer sep=2pt] at (11/32,4/8) {$\beta_3$};
 \end{tikzpicture}
 \caption{A sequence of piecewise timelike curves $\beta_i$ in $\M^{1+1}$ converges uniformly to a null curve $\beta$ such that $\liminf_{i \to \infty} \hat L_\tau(\beta_i) > \hat L_\tau(\beta)$ in Example~\ref{ex:timelikecounterexample}.}
 \label{fig:timelikecounterexample}
\end{figure}
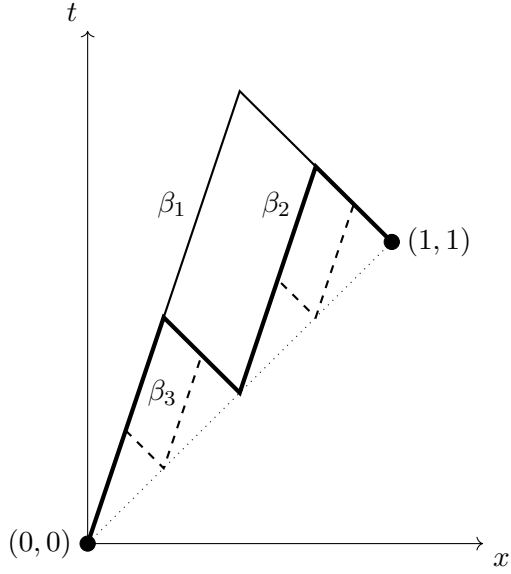

\begin{ex}[Null counterexample]\label{ex:nullcounterexample}
 In a similar fashion one can define a sequence of piecewise null curves $\beta_i$ on $\M^{1+1}$ that uniformly converge to the curve $\beta(s) = s$ so that
 \[
  \hat L_\tau (\beta_i) = 5 > 1 = \hat L_\tau(\beta).
 \]
 See Figure~\ref{fig:nullcounterexample}.
\end{ex}

\begin{figure}[ht]
 \centering
 \begin{tikzpicture}[scale=4]
  \draw[->] (-1.3,0) -- (1.3,0) node[anchor=north west] {$x$};
  \draw[->] (0,0) -- (0,2.2) node[anchor=south east] {$t$};
  \draw[dotted] (0,0) -- (1,1);
  \draw[fill] (0,0) circle [radius=0.025];
  \node[below, outer sep=2pt] at (0,0) {$(0,0)$};
  \draw[fill] (1,1) circle [radius=0.025];
  \node[right, outer sep=2pt] at (1,1) {$(1,1)$};
  \draw[thick] (0,0) -- (-1,1) -- (-2/3,4/3) -- (1/3,1/3) -- (2/3,2/3) -- (-1/3,5/3) -- (0,2) -- (1,1);
  \node[above, outer sep=2pt] at (-1+1/6,7/6) {$\beta_1$};
  \draw[ultra thick] (0,0) -- (-1/2,1/2) -- (-1/2+1/9,1/2+1/9) -- (1/9,1/9) -- (2/9,2/9) -- (2/9-1/2,2/9+1/2) -- (1/3-1/2,1/3+1/2) -- (1/3,1/3) -- (2/3,2/3) -- (-1/2+2/3,1/2+2/3) -- (-1/2+1/9+2/3,1/2+1/9+2/3) -- (1/9+2/3,1/9+2/3) -- (2/9+2/3,2/9+2/3) -- (2/9-1/2+2/3,2/9+1/2+2/3) -- (1/3-1/2+2/3,1/3+1/2+2/3) -- (1,1);
   \node[above, outer sep=2pt] at (-1/3,2/3) {$\beta_2$};
   \draw[thick, dashed] (0,0) -- (-1/4,1/4)-- +(1/27,1/27) -- (1/27,1/27) -- + (1/27,1/27) -- (2/27-1/4,2/27+1/4) -- + (1/27,1/27) -- (1/9,1/9) -- (2/9,2/9) -- (2/9-1/4,2/9+1/4)-- +(1/27,1/27) -- (7/27,7/27) -- + (1/27,1/27) -- (8/27-1/4,8/27+1/4) -- + (1/27,1/27) -- (1/3,1/3) -- (2/3,2/3) -- (2/3-1/4,2/3+1/4)-- +(1/27,1/27) -- (2/3+1/27,2/3+1/27) -- + (1/27,1/27) -- (2/3+2/27-1/4,2/3+2/27+1/4) -- + (1/27,1/27) -- (2/3+1/9,2/3+1/9) -- (2/3+2/9,2/3+2/9) -- (2/3+2/9-1/4,2/3+2/9+1/4)-- +(1/27,1/27) -- (2/3+7/27,2/3+7/27) -- + (1/27,1/27) -- (2/3+8/27-1/4,2/3+8/27+1/4) -- + (1/27,1/27) -- (1,1);
   \node[above, outer sep=2pt] at (-1/4,1/4) {$\beta_3$};
 \end{tikzpicture}
 \caption{A sequence of piecewise null curves $\beta_i$ in $\M^{1+1}$ converges uniformly to a null curve $\beta$ such that $\liminf_{i \to \infty} \hat L_\tau(\beta_i) > \hat L_\tau(\beta)$ in Example~\ref{ex:nullcounterexample}.}
 \label{fig:nullcounterexample}
\end{figure}
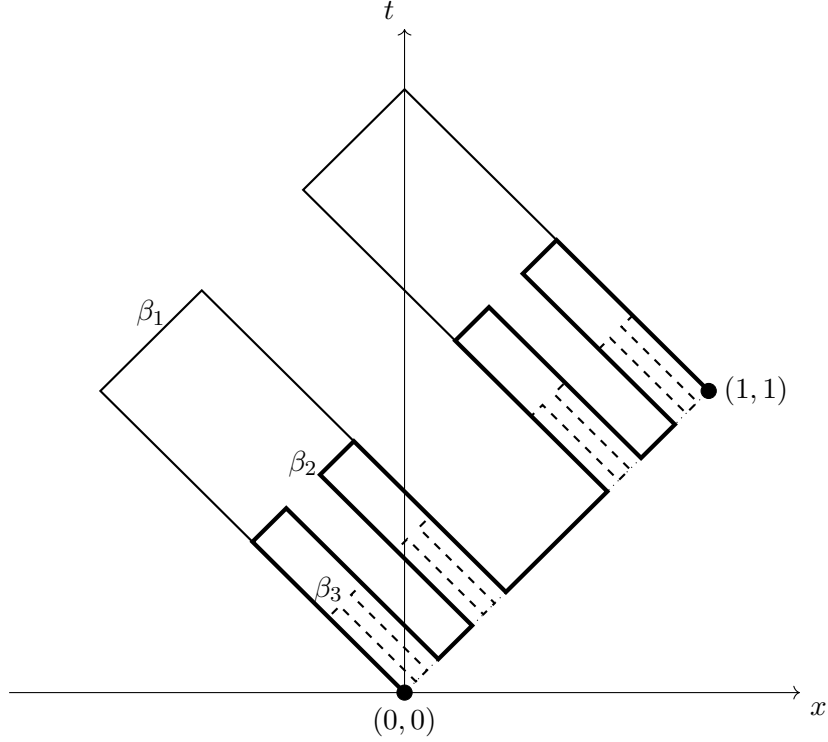

\subsection{Completeness}\label{subsec: Completeness}
 
 Length spaces with complete metrics have additional features.
 We provide conditions on the Lorentzian manifold and the continuous time function $\tau$ which imply that $(M,\hat d_\tau)$ is a complete metric space. 

 \begin{prop}\label{Conditions Implying Null Complete}
 Let $(M,\g)$ be a spacetime with time function $\tau$. If there exists a complete background metric $d$ on $M$ (that induces the manifold topology) and a constant $C>0$ such that
 \[
  \hat d_\tau(p,q) \geq C d (p,q), \qquad p,q \in M,
 \]
 then $(M,\hat{d}_{\tau})$ is complete.
\end{prop}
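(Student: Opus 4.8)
The plan is to run the standard completeness argument by transferring the Cauchy condition to the complete background distance $d_\h$ and then transferring convergence back using the fact that $\hat d_\tau$ metrizes the manifold topology. So the starting point is an arbitrary Cauchy sequence $(p_i)_i$ in $(M,\hat d_\tau)$, and the goal is to produce a limit point.

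First I would use the hypothesis directly: from $\hat d_\tau(p,q) \geq C d_\h(p,q)$ we get $d_\h(p_i,p_j) \leq \tfrac{1}{C}\hat d_\tau(p_i,p_j)$, so $(p_i)_i$ is also Cauchy in $(M,d_\h)$. Since $\h$ is a complete Riemannian metric, $(M,d_\h)$ is a complete metric space whose metric topology is the manifold topology; hence there exists $p \in M$ with $p_i \to p$ in $d_\h$, equivalently $p_i \to p$ in the manifold topology. Next I would invoke the Sormani--Vega theorem quoted above, \cite{SV}*{Thm.\ 4.6}: because $\tau$ is a (continuous) locally anti-Lipschitz time function, $\hat d_\tau$ is a definite metric on $M$ that induces the manifold topology. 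Convergence in the manifold topology is therefore the same as convergence in $\hat d_\tau$, so $p_i \to p$ with respect to $\hat d_\tau$ as well. Thus every $\hat d_\tau$-Cauchy sequence converges in $(M,\hat d_\tau)$, which is the claim.

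There is no serious obstacle here; the one point requiring care is the final step, the passage from manifold-topology convergence back to $\hat d_\tau$-convergence, which genuinely relies on $\hat d_\tau$ metrizing the manifold topology — this is exactly where the local anti-Lipschitz hypothesis on $\tau$ is used, and the length-space structure of Theorem~\ref{thm:lengthspace} plays no role. It may be worth remarking that the one-sided estimate is used only in the ``easy'' direction (Cauchy in $\hat d_\tau$ $\Rightarrow$ Cauchy in $d_\h$); no reverse comparison between the two distances is needed, since the topological statement of \cite{SV}*{Thm.\ 4.6} already supplies the return trip.
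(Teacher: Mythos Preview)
Your proof is correct and follows essentially the same approach as the paper: transfer the Cauchy property to $d_\h$, obtain a limit $p$ by completeness of $\h$, and then transfer convergence back using that both $d_\h$ and $\hat d_\tau$ induce the manifold topology. The paper merely spells out the last step via nested balls $B_{d_\h}(p,\varepsilon'') \subseteq B_{\hat d_\tau}(p,\varepsilon') \subseteq B_{d_\h}(p,\varepsilon)$ rather than invoking the topological equivalence in one line, but the content is identical.
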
 

\begin{proof} 
 Let $(p_i)_i$ be a Cauchy sequence of points in $M$ with respect to $\hat d_\tau$. By assumption,
 \[
  \hat d_\tau (p_i,p_j) \geq C d(p_i,p_j),
 \]
 where $d$ is the distance function associated with a complete metric $d$. In particular, since $(p_i)_i$ is also a Cauchy sequence with respect to $d$ there exists $p \in M$ so that
 \[
  \lim_{i\to\infty}  d (p_i,p) =0.
 \]
 It remains to be shown that $\displaystyle\lim_{i\to\infty}\hat d_\tau(p_i,p) = 0$ to prove completeness of $M$ with respect to $\hat d_\tau$.
 
 Let $\varepsilon >0$. Since both $d$ and $\hat d_\tau$ are metrics that induce the manifold topology (in the latter case it is due to \cite{SV}*{Proposition 3.15 and Proposition 4.5}) there exists a $\varepsilon'_\varepsilon\in (0,\varepsilon)$ such that $B^{\hat d_\tau}_{\varepsilon'}(p) \subseteq B^{d}_\varepsilon(p)$, and a $\varepsilon''_\varepsilon>0$ such that
 \[
  B^{d}_{\varepsilon''}(p) \subseteq B^{\hat d_\tau}_{\varepsilon'}(p) \subseteq B^{d}_\varepsilon(p).
 \]
 Since $p_i \to p$ with respect to $d$, for all $i$ sufficiently large we have that $p_i \in B^{d}_{\varepsilon''}(p) \subseteq B^{\hat d_\tau}_{\varepsilon'}(p)$. In particular, for $i$ sufficiently large we have $p_i \in B^{\hat d_\tau}_{\varepsilon}(p)$ since $\varepsilon' < \varepsilon$, and therefore
 \[
  \lim_{i\to\infty} \hat d_\tau (p_i,p) = 0. \qedhere
 \]
\end{proof}

 We now relate the anti-Lipschitz time function and background metric assumptions of Proposition \ref{Conditions Implying Null Complete} more directly.

\begin{customthm}{\ref{Conditions Implying Null Complete 2}}
 Let $(M,\g)$ be a spacetime with time function $\tau$. If $\tau$ is anti-Lipschitz on $M$ with respect to a complete metric $d$ (that induces the manifold topology), then $(M,\hat d_\tau)$ is complete. 
\end{customthm}

\begin{proof}
 We show that the assumptions of Proposition \ref{Conditions Implying Null Complete} are satisfied. If $p$ and $q$ are causally related, then 
 \begin{align*}
 |\tau(p) - \tau(q)| = \hat d_\tau (p,q).
 \end{align*}
 Otherwise, for every $\varepsilon>0$, we can choose a piecewise null curve $\beta$ so that
 \begin{align*}
    \hat{L}(\beta) \le \hat d_\tau(p,q) + \varepsilon,
 \end{align*}
 with breaking points $\beta(s_i)=p_i$ so that
 \begin{align*}
  \hat d_\tau(p,q) \geq \hat{L}(\beta)-\varepsilon = \sum_i \hat d_\tau (p_i,p_{i_1})- \varepsilon 
  = \sum_i |\tau(p_i)-\tau(p_{i-1})| - \varepsilon.
 \end{align*}
 Since $\tau$ is globally anti-Lipschitz with respect to $d$ we find
 \begin{align*}
  \hat{d}_{\tau}(p,q)  &> C \sum_i d(p_i,p_{i-1}) - \varepsilon
  \geq C d(p,q) - \varepsilon.
 \end{align*}
 Since this inequality holds for all $\varepsilon$, we have
 \begin{align*}
    \hat{d}_{\tau}(p,q)   \geq C d(p,q)
 \end{align*}
 and can apply Proposition \ref{Conditions Implying Null Complete} to finish the proof.
\end{proof}
 
 The Hopf--Rinow Theorem implies the following result.
 
 \begin{cor}\label{cor:complete}
  Let $(M,\g)$ be a spacetime with time function $\tau$. If $\tau$ is anti-Lipschitz on $M$ with respect to a complete Riemannian metric $\h$, then $(M,\hat d_\tau)$ is complete.\qedhere
 \end{cor}

\begin{rmrk}
 Recall that the standard Hopf--Rinow Theorem does not hold for Lorentzian manifolds (see the example of the Clifton--Pohl torus in \cite{O}*{p.\ 193}). The Hopf--Rinow--Cohn-Vossen Theorem for length spaces (see, for instance, \cite{BBI}*{Theorem 2.5.28} or \cite{G}*{Section 1.9}) implies that if the length space $(M,\hat d_\tau)$ is complete then the Heine--Borel property holds (every closed and bounded set is compact) and that between any two points a shortest rectifiable curves exists. Examples \ref{DistanceNotAchievedWarped} and \ref{DistanceNotAchieved} show that completeness does not guarantee that the distance is achieved by a piecewise causal curve.
\end{rmrk}
 
\begin{ex}[Warped product with distance not achieved in $\A$]\label{DistanceNotAchievedWarped}
 Con\-sid\-er the Lorentzian warped product $\R \times_f \R$ with $f(t) =t^2+1$, i.e.,
\begin{align*}
    \g=-dt^2+(t^2+1)^2 dx^2.
\end{align*}
 We argue that the null distance between $p = (x,t) = (-1,0)$ and $q = (1,0)$ is $\hat{d}_{\tau}(p,q) = 2$, and that it is not achieved by any piecewise causal curve. Indeed, this is true for any two points at the $\{ t=0 \}$ level.
 
 In order to see that $\hat d_\tau(p,q) = 2$ we refer to Proposition~\ref{prop:biLip}, which implies that on each time interval $[-t,t]$ we have the upper and lower bounds for the null distance $\hat d_f = \hat d_t$ of the warped product compared to the null distance $\hat d_\M$ of the 2-dimensional Minkowski space $\M$ given by
 \[
  \hat d_\M (p,q) \leq \hat d_f(p,q) \leq (t^2 +1) \hat d_\M(p,q).
 \]
 Since $t$ can be made arbitrarily small if we restrict our attention to piecewise causal curves within $[-\frac{1}{i},\frac{1}{i}] \times \R$ (see Figure~\ref{fig:DistanceNotAchievedWarped} for a visualization of null curves emanating from $t=0$), we obtain for $p=(-1,0)$ and $q=(1,0)$ that
 \[
  \hat d_f(p,q) = \hat d_\M(p,q) = 2.
 \]
 
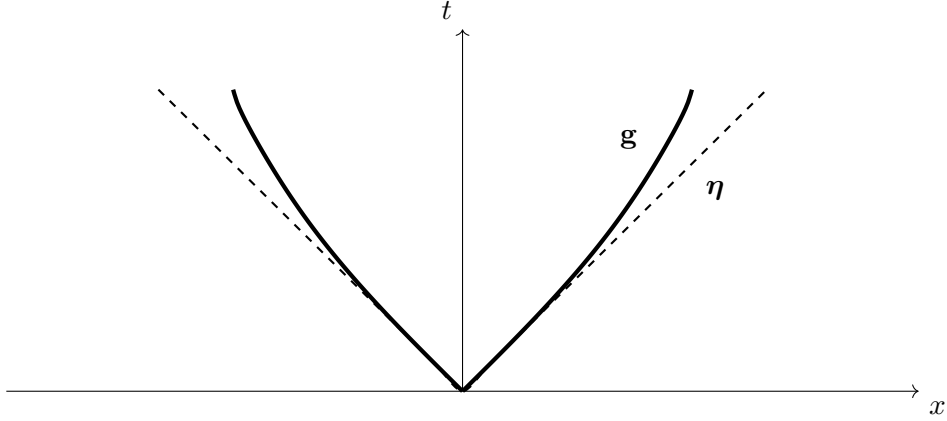
\begin{figure} [h]
  \centering
 \begin{tikzpicture}[scale=4]
  \draw[->] (-1.5,0) -- (1.5,0) node[anchor=north west] {$x$};
  \draw[->] (0,0) -- (0,1.2) node[anchor=south east] {$t$};
   \draw[domain=0:1,smooth,variable=\t,ultra thick] plot ({\t - \t^3/3+\t*\t^5/5-\t^7/7+\t^9/9-\t^11/11+\t^13/13-\t^15/15},{\t} );
   \draw[domain=0:1,smooth,variable=\t,ultra thick] plot ({-\t + \t^3/3-\t*\t^5/5+\t^7/7-\t^9/9+\t^11/11-\t^13/13+\t^15/15},{\t} );
   \node[left, outer sep=2pt] at (3/4-1/8,5/6) {$\g$};
   \draw[domain=-1:1,smooth,variable=\t,dashed,thick] plot ({\t},{abs(\t)} );
   \node[right, outer sep=2pt] at (3/4,2/3) {$\etab$};
 \end{tikzpicture}
  \caption{Light cones with respect to $\g$ and the Minkowski metric $\etab$ in Example~\ref{DistanceNotAchievedWarped}.}
  \label{fig:DistanceNotAchievedWarped}
 \end{figure}
 
 
This distance cannot be achieved by any piecewise causal curve, because any such curve $\beta$ leaves the $\{t=0\}$ line and beyond the $\{t = 0 \}$ line the light cones of Minkowski space are strictly larger than that of $\g$.
 
Note that, however, the metric $\g$ still encodes causality by Theorem \ref{thm:warpedcausality} of Sormani and Vega \cite{SV} stated in Section \ref{sect-Encoding Causality}. (We would like to thank Christina Sormani for sharing this example with us.)
\end{ex}
 
\begin{ex}[Minkowski space and distance not achieved in $\A$]\label{DistanceNotAchieved}
 Consider $\M^{1+1}$ with discontinuous time function 
 \[
 \tau(t)= \begin{cases}
            \sqrt{t}+1 &  t> 1
         \\ \sqrt{t} & 0\le t \le 1
         \\t & t \le 0
          \end{cases}
 \]
 then the distance between $p = (-1,1)$ and $q = (1,1)$, $\hat{d}_{\tau}(p,q) = 1$, is not achieved by any piecewise causal curve. One can define a sequence of piecewise causal curves $\beta_i \colon [-1,1] \to \M^{1+1}$ connecting $p$ and $q$ as graph of $\tilde \beta_i$ (see Figure \ref{fig:CurveSeqDiscts}), given inductively by
 \begin{align*}
  \tilde \beta_1 (s) &= |s|, \\
  \tilde \beta_i (s) &= \begin{cases}
                  \frac{1}{2}+\frac{1}{2} \tilde \beta_{i-1} (2s+1) & s \in [-1,0], \\
                  \frac{1}{2}+\frac{1}{2} \tilde \beta_{i-1} (2s-1) & s \in [0,1].
                 \end{cases}
 \end{align*}
 Then $\hat L_\tau (\beta_1) = 2$ and
\begin{align*}
 \hat{L}_{\tau}(\beta_i) &= 2^i \left( 1 - \frac{(2^{i-1}-1)^{\frac{1}{2}}}{2^{\frac{i-1}{2}}}\right) \\
                         &= 2^{\frac{i+1}{2}} \left( 2^{\frac{i-1}{2}} - (2^{i-1}-1)^{\frac{1}{2}}\right) \to 1
\end{align*}

 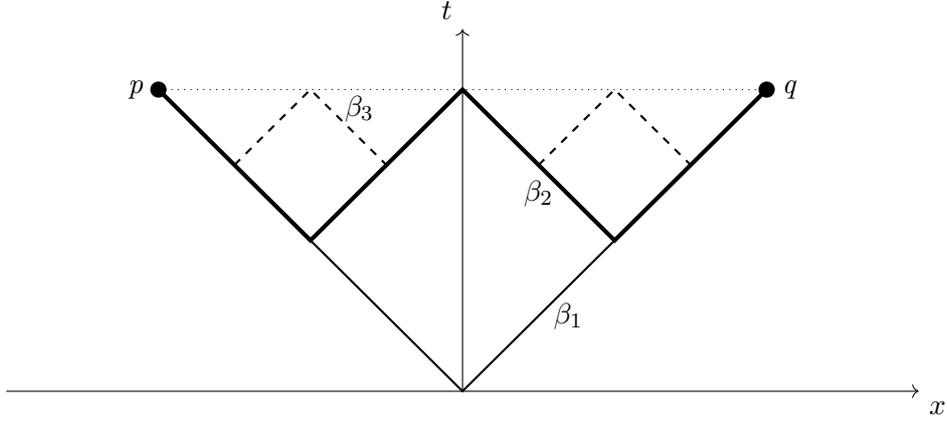
\begin{figure} [h]
  \centering
 \begin{tikzpicture}[scale=4]
  \draw[->] (-1.5,0) -- (1.5,0) node[anchor=north west] {$x$};
  \draw[->] (0,0) -- (0,1.2) node[anchor=south east] {$t$};
  \draw[dotted] (-1,1) -- (1,1);
  \draw[fill] (-1,1) circle [radius=0.025];
  \node[left, outer sep=2pt] at (-1,1) {p};
  \draw[fill] (1,1) circle [radius=0.025];
  \node[right, outer sep=2pt] at (1,1) {q};
  \draw[thick] (-1,1) -- (0,0) -- (1,1);
  \node[right, outer sep=2pt] at (1/4,1/4) {$\beta_1$};
  \draw[ultra thick] (-1,1) -- (-1/2,1/2) -- (0,1) -- (1/2,1/2) -- (1,1);
  \node[below, outer sep=2pt] at (1/4,3/4) {$\beta_2$};
  \draw[thick,dashed] (-3/4,3/4) -- (-1/2,1) -- (-1/4,3/4) -- (0,1)(1/4,3/4) -- (1/2,1) -- (3/4,3/4); 
  \node[right, outer sep=2pt] at (-7/16,15/16) {$\beta_3$};
 \end{tikzpicture}
  \caption{A sequence of piecewise causal curves $\beta_i$ between $p$ and $q$ showing that the null distance is not achieved in Example~\ref{DistanceNotAchieved}.}
  \label{fig:CurveSeqDiscts}
 \end{figure}

 Note that if, on the other hand, $\bar{\tau}(t) = \sqrt{t}$ for all $t \ge 0$ then $\hat{d}_{\bar{\tau}}(p,q) = 2(\sqrt{2}-1) < 1$ which is achieved by a piecewise causal curve connecting $p $ to $(0,2)$ and subsequently to $q$. In the initial example, the discontinuity of $\tau$ forces the sequence of distance-approximating curves to eventually lie below $t = 1$. The distance cannot be achieved because the limit curve $\beta(s)=(s,1)$ is not piecewise causal. Note that the time function $\tau$ is locally anti-Lipschitz and hence the issue is truly caused by the discontinuity.
\end{ex}

\begin{rmrk}
 We conjecture that having a (possibly locally anti-Lipschitz but) generalized time function $\tau$ that is discontinuous along any causal curve already implies that distances are not achieved by piecewise causal curves in $(M,\hat d_\tau)$.
\end{rmrk}

 
\subsection{Causality-encoding property} \label{sect-Encoding Causality}


 One of the major advantages of the null distance over auxiliary metrics (for instance, induced via a background Riemannian metric) is its relationship to the causal structure of a spacetime, a feature already investigated in \cite{SV}. In this Section, we recall the concept and discuss known results and counterexamples.
 
\begin{defn}
 Let $(M,\g)$ be a spacetime with generalized time function $\tau$. We say that the null distance $\hat{d}_{\tau}$ \emph{encodes causality} if
 \begin{align}
  p \le q \Longleftrightarrow \hat{d}_{\tau}(p,q) = \tau(p) - \tau(q).
 \end{align}
\end{defn}
 
 In some cases, for instance, Minkowski space $\M^{n+1} = \R^{n,1}$ with time function $\tau = t$ and smooth warped products, it was already shown by Sormani and Vega that causality is encoded. Since we are mostly interested in warped product spacetimes, we recall this result here.
 
\begin{customthm}{\ref{thm:warpedcausality}}[\cite{SV}*{Theorem 3.25}]
 Let $I$ be an interval, $f \colon I \to (0,\infty)$ smooth and $(\Sigma,\bsigma)$ a complete Riemannian manifold. Let $M = I \times_f \Sigma$ be the warped spacetime with metric tensor
 \[
  \g = -dt^2 + f(t)^2 \bsigma,
 \]
 and let $\tau(t,x) = \phi(t)$ be a smooth time function on $M$ with $\phi'>0$. Then the induced null distance $\hat d_\tau$ is definite and encodes causality of $M$.
\end{customthm}

\begin{rmrk}
 Note that, if one drops completeness of $(\Sigma,\bsigma)$ in Theorem \ref{thm:warpedcausality}, one can still prove that
 \[
  q \in \overline{I^+(p)} \Longleftrightarrow \hat d_\tau (p,q) = \tau(q)-\tau(p),
 \]
 see \cite{SV}*{Theorem 3.28}.
\end{rmrk}

 On more general spacetimes, it is clear that
 \[
  p \leq q \Longrightarrow \hat d_\tau (p,q) = \tau(q) - \tau(p)
 \]
 is always satisfied, while the converse
 \begin{align}\label{encodecausality}
  p \leq q \Longleftarrow \hat d_\tau (p,q) = \tau(q) - \tau(p)
 \end{align}
 guarantees that $\hat d_\tau$ is definite \cite{SV}*{Lemma 3.12}, and hence $\hat d_\tau$ is a metric that induces the manifold topology for continuous $\tau$ \cite{SV}*{Proposition 3.15}.
 In \cite{SV}*{Proposition 4.5} it has been shown that locally anti-Lipschitz time functions $\tau$ are exactly those for which the corresponding null distances $\hat d_\tau$ are definite. 
 
\begin{rmrk}
 Suppose that $(M,\hat d_\tau)$ is such that for any two points $p,q \in M$, the distance is achieved by a piecewise causal curve $\beta$. Then $\hat L(\beta) = \hat d_\tau(p,q) = \tau(q) - \tau(p)$ implies by \cite{SV}*{Lemma 3.6} that $p \leq q$, i.e., the converse \eqref{encodecausality} holds and $\hat d_\tau$ encodes causality. 
\end{rmrk}
 
 Summing up the above the discussion, the following relations for (generalized) time functions $\tau$ are known:
  \[
 \begin{tikzcd}
  \hat d_\tau \text{ encodes causality} \arrow[Rightarrow]{r}{}
  & \hat d_\tau ~\text{is definite}
 \\ [2em]
 \tau \text{ is locally anti-Lipschitz} \arrow[Rightarrow]{u}{\text{if distances are achieved in }\A~} \arrow[Leftrightarrow]{ur}{}
 \end{tikzcd}
\]

 In particular, for spacetimes where all null distances between points are realized by piecewise causal curves, all three important notions are equivalent. In Riemannian geometry, such a condition would be guaranteed by completeness of the length space via the Hopf--Rinow Theorem. In Section \ref{subsec: Completeness} we have observed that completeness of $(M,\hat d_\tau)$ does in general not guarantee that a shortest path is piecewise causal, however, the null distances of the counterexamples still encode causality. As such, completeness still plays an important role in connection with the causality encoding property. In Examples \ref{IncompleteDoesNotEncodeCausality1} and \ref{IncompleteDoesNotEncodeCausality2} we discuss two simple incomplete (and not globally hyperbolic) spacetimes with globally anti-Lipschitz function that do not encode causality.
 
\begin{ex}\label{IncompleteDoesNotEncodeCausality1}
 Consider 2-dimensional Minkowski space $\M^{1+1}$ with the point $(1,1)$ removed and time function $\tau = t$ (see \cite{O}*{Ex.\ 4, p.\ 403}). For $p=(0,0)$ and $q=(2,2)$ we obtain that $\hat{d}_{\tau}(p,q) = \tau(q)-\tau(p)=2$ which can be seen by taking a sequence of piecewise causal curves in the future light cone of $p$ (see Figure~\ref{IncompleteDoesNotEncodeCausality1}). On the other hand, it is clear that $p \not \le q$.

 \begin{figure}[h]
 \centering
 \begin{tikzpicture}[scale=3]
  \draw[->] (0,0) -- (2.2,0) node[anchor=north west] {$x$};
  \draw[->] (0,0) -- (0,2.7) node[anchor=south east] {$t$};
  \draw[dotted] (0,0) -- (2,2);
  \draw[fill] (0,0) circle [radius=0.025];
  \node[left, outer sep=2pt] at (0,0) {$p$};
  \draw[fill] (2,2) circle [radius=0.025];
  \node[right, outer sep=2pt] at (2,2) {$q$};
  \draw[draw=black, fill=white] (1,1) circle [radius=0.025];
  \node[right, outer sep=2pt] at (1,1) {$\mathrm{point}~\mathrm{removed}$};
  \draw[thick] (0,0) -- (2-1/2,2+1/2) -- (2,2);
  \node[left, outer sep=2pt] at (2-1/2,2+1/2) {$\beta_1$};
  \draw[ultra thick] (0,0) -- (2-1/4,2+1/4) -- (2,2);
  \node[left, outer sep=2pt] at (2-1/4,2+1/4) {$\beta_2$};
  \draw[thick, dashed] (0,0) -- (2-1/8,2+1/8) -- (2,2);
  \node[left, outer sep=2pt] at (2-1/8,2+1/8) {$\beta_3$};
 \end{tikzpicture}
 \caption{A sequence of piecewise null curves $\beta_i$ in $\M^{1+1}$ whose length converges to $\hat{d}_{\tau}(p,q)=2$ in an  incomplete spacetime, described in Example~\ref{IncompleteDoesNotEncodeCausality1}.}
 \label{fig:IncompleteDoesNotEncodeCausality1}
 \end{figure}
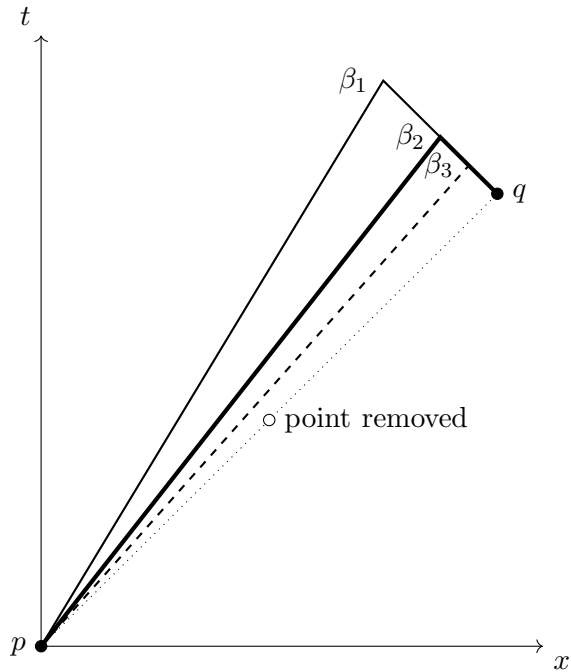
\end{ex}

\begin{ex}\label{IncompleteDoesNotEncodeCausality2}
 Consider again $\M^{1+1}$ with time function $\tau=t$, but now we remove the whole line $\{(x,1) : -1 \leq x \leq 1 \}$.  Let $p=(0,0)$ and $q=(0,2)$ which satisfy $\hat{d}_{\tau}(p,q) = \tau(q)-\tau(p)=2$ as can be seen by taking (approximating) paths which connect $p$ via null geodesics to $p_\varepsilon = (\varepsilon,0)$, through $(1+\varepsilon,1)$ and then proceed to $q_\varepsilon = (\varepsilon,2)$ (see Figure~\ref{IncompleteDoesNotEncodeCausality2}). It is also clear in this case that $p \not \le q$.
 
 \begin{figure}
  \begin{tikzpicture}[scale=3]
  \draw[->] (-1.2,0) -- (1.2,0) node[anchor=north west] {$x$};
  \draw[->] (0,0) -- (0,2.2) node[anchor=south east] {$t$};
  \draw[dashdotted] (-1,1) -- (1,1);
  \node[above, outer sep=2pt] at (-1,1) {$\mathrm{line}~\mathrm{removed}$};
  \draw[fill] (0,0) circle [radius=0.025];
  \node[below, outer sep=2pt] at (0,0) {$p$};
  \draw[fill] (0,2) circle [radius=0.025];
  \node[left, outer sep=2pt] at (0,2) {$q$};
  \draw[dotted] (0,0) -- (1,1) -- (0,2);
  \draw[thick] (0,0)--(1/4,1/4) -- (1/2,0) -- (3/2,1) -- (1/2,2) -- (1/4,7/4) -- (0,2);
  \node[right, outer sep=2pt] at (1,3/2) {$\beta_1$};
  \draw[ultra thick] (0,0)--(1/8,1/8) -- (1/4,0) -- (5/4,1) -- (1/4,2) -- (1/8,1+7/8) -- (0,2);
  \node[right, outer sep=2pt] at (1,5/4) {$\beta_2$}; 
  \draw[thick, dashed] (0,0)--(1/16,1/16) -- (1/8,0) -- (9/8,1) -- (1/8,2) -- (1/16,1+15/16) -- (0,2);
  \node[left, outer sep=2pt] at (1,9/8) {$\beta_3$};
  \end{tikzpicture}
  \caption{A sequence of piecewise null curves $\beta_i$ in $\M^{1+1}$ whose null length converges to $\hat{d}_{\tau}(p,q)=2$ in an incomplete spacetime, described in Example~\ref{IncompleteDoesNotEncodeCausality2}.}
 \label{fig:IncompleteDoesNotEncodeCausality2}
 \end{figure}
\end{ex}

 
\section{$(M,\hat d_\tau)$ as an integral current space}\label{sect-Integral Current Spaces}


 Our goal in this section is to rigorously introduce the current structure on spacetimes (including those of low regularity), which carry a metric structure obtained via the null distance. In particular, we focus here on warped products and globally hyperbolic spacetimes, since they constitute the most important classes of Lorentzian manifolds in general relativity. The strategy is to induce a (local) integral current space structure by using (locally) bi-Lipschitz maps to Lorentzian products and then defining bi-Lipschitz maps from regions of Lorentzian products to regions of a suitable Riemannian space which allows us to apply Theorem \ref{thm:intcurrent} or Theorem \ref{thm:localintcurrent}. If the Lorentzian metrics are continuous a null distance analogue of \cite{B}*{Theorem 4.5} suffices, however, we also allow metrics of less regularity if they have a warped product structure. Thus we prove a Lorentzian analogue of the Riemannian result in \cite{AS}*{Lemma 2.3} instead. We start by investigating properties of Lorentzian products, and then move on to warped products and globally hyperbolic spacetimes.

\subsection{Lorentzian product manifolds}\label{ssec:genMink}
   
 We analyze Lorentzian products $\M_\bsigma = \R \times \Sigma$, where $(\Sigma,\bsigma)$ is a suitable Riemannian manifold, and show that its canonical null distance $\hat d_\bsigma = \hat d_t$  has the same features as the null distance of Minkowski space $\M^{n+1} = \R^{n,1}$. Moreover, we prove that $\M_\bsigma$ is a (local) integral current space.
   
 Lorentzian products naturally include all Lorentzian manifolds for which a Lorentzian splitting theorem applies (see, for instance, \cite{E,G1,G2,N}, and \cite{F} for an overview of Lorentzian splitting theorems). In particular, they include globally hyperbolic (or timelike geodesically complete) spacetimes with nonnegative Ricci curvature bounds and a timelike line.
   
 \smallskip
 We recall the metric and locally integral current structure of the simplest Lorentzian product. 
  
\begin{ex}[Minkowski space]\label{exMinkowski}
  By  $\M^{n+1}$ we denote the ($n+1$)-dimensional Min\-kow\-ski space. The Minkowski metric can be written as
  \[
   \etab = -dt^2 + \g_{\E^n},
  \]
 where $\g_{\E^n}$ is the standard metric on the Euclidean $n$-space. For points $p \in \M^{n+1}$ we write $p = (t_p,p_\E)$, where $t_p = t(p) \in \R$ and $p_\E \in\R^n$. The null distance of $\M^{n+1}$ (with respect to the canonical time function $\tau = t$) is denoted by $\hat d_{\M}$. Several properties of this null distance have already been established by Sormani and Vega in \cite{SV}. Amongst others, it was shown in \cite{SV}*{Proposition 3.3} that $\hat d_{\M}$ is definite, translation invariant and satisfies
  \begin{align}\label{ndMink}
   \hat d_{\M}(p,q) = \begin{cases}
                                |t(q) - t(p)| & q \in J^\pm(p), \\
                                \| q_\E - p_\E \| & q \not\in J^\pm(p),
                               \end{cases}
  \end{align}
 where $\|.\|$ denotes the Euclidean norm and $J^\pm(p)$ the causal future/past of $p$. Note that many of these properties are lost if one uses another time function \cite{SV}*{Proposition 3.4}.
 The standard locally integral current $T$ on $\M^{n+1}$ is that of $\R^{n+1}$, that is,
  \[
   T(f,\pi_1,\ldots,\pi_{n+1}) = \int_{\R^{n+1}} f \, d\pi_1 \wedge \ldots \wedge d\pi_{n+1}.
  \]
\end{ex}

\begin{defn}\label{Def generalized Minkowski}
  Let $(\Sigma,\bsigma)$ be a Riemannian manifold. The \emph{Lorentzian product manifold} $\M_\bsigma$ is the Lorentzian manifold $\R \times \Sigma$ with metric tensor
 \begin{align}
 \etab_\bsigma = -dt^2 + \bsigma.
 \end{align} 
\end{defn}

\begin{rmrk}[Regularity]
 In Definition~\ref{Def generalized Minkowski} we do not need to assume that $\bsigma$ is smooth, however, the induced Riemannian distance function $d_\bsigma$ should be well-defined to obtain a meaningful null distance $\hat d_\bsigma$. We can, for instance, assume that $\bsigma$ is continuous \cite{B}.
\end{rmrk}

 The form \eqref{ndMink} of the null distance in Minkowski space extends to (weak) Lorentzian products in the obvious way.
  
\begin{lem}\label{lem:nulld1}
 Let $(\Sigma,\bsigma)$ be a connected Riemannian manifold and $(\M_\bsigma,\etab_\bsigma)$ be the corresponding Lorentzian product manifold of Definition~\ref{Def generalized Minkowski}. Then the null distance $\hat d_\bsigma$ induced by the canonical time function $t$ satisfies
 \begin{align}\label{d:genMink}
  \hat d_\bsigma (p,q) = \begin{cases}
                           |t(p) - t(q)| & q \in J^\pm(p), \\
                           d_\bsigma(p_\Sigma,q_\Sigma) & q \not\in J^\pm(p),
                          \end{cases}
 \end{align}
 where $d_\bsigma$ is the Riemannian distance function induced by $\bsigma$ on $\Sigma$ and $J^\pm(p)$ is the causal future/past of $p$ with respect to $\etab_\bsigma$.
\end{lem}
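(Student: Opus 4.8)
The plan is to prove the two lines of \eqref{d:genMink} separately, in close analogy with the Minkowski formula \eqref{ndMink} of \cite{SV}. Note first that $t$ really is a time function on $\M_\bsigma$: for a future-directed causal vector $v=(v^t,v^\Sigma)$ one has $|v^t|\ge|v^\Sigma|_\bsigma$ and $v^t>0$ unless $v=0$, so $dt(v)=v^t>0$ and $t$ strictly increases along future causal curves. If $q\in J^\pm(p)$, say $p\le q$, then on one side the triangle inequality gives $\hat L_\tau(\beta)=\sum_i|\tau(\beta(s_i))-\tau(\beta(s_{i-1}))|\ge|\tau(q)-\tau(p)|=t(q)-t(p)$ for every piecewise causal $\beta$ from $p$ to $q$, hence $\hat d_\bsigma(p,q)\ge t(q)-t(p)$; on the other side, any future-directed causal curve $\beta$ witnessing $p\le q$ has $t$ monotone increasing along it, so $\hat L_\tau(\beta)=\int(t\circ\beta)'=t(q)-t(p)$, giving the reverse inequality (both halves being already in \cite{SV}). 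This settles the first line.

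For $q\notin J^\pm(p)$ I would first record the auxiliary inequality $|t(p)-t(q)|\le d_\bsigma(p_\Sigma,q_\Sigma)$: if instead, say, $t(q)-t(p)>d_\bsigma(p_\Sigma,q_\Sigma)$, pick a curve $\gamma$ in $\Sigma$ from $p_\Sigma$ to $q_\Sigma$ of length $L'<t(q)-t(p)$ (a vertical timelike segment if $p_\Sigma=q_\Sigma$) and lift it with $t$-component increasing at the constant rate $\tfrac{t(q)-t(p)}{L'}>1$ relative to the $\bsigma$-speed of $\gamma$; this lift is a future-directed causal curve from $p$ to $q$, contradicting $q\notin J^+(p)$ (the case $p_\Sigma=q_\Sigma$ also forces a contradiction, and $d_\bsigma(p_\Sigma,q_\Sigma)>0$ here). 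Next, for the lower bound $\hat d_\bsigma(p,q)\ge d_\bsigma(p_\Sigma,q_\Sigma)$, take any piecewise causal $\beta\colon[a,b]\to\M_\bsigma$ from $p$ to $q$ and write $\beta=(T,\Gamma)$ with $T=t\circ\beta$ and $\Gamma$ the $\Sigma$-part. Causality on each smooth piece reads $-|T'|^2+|\Gamma'|_\bsigma^2=\etab_\bsigma(\beta',\beta')\le0$, i.e.\ $|T'|\ge|\Gamma'|_\bsigma$ a.e., so
\[
 \hat L_\tau(\beta)=\int_a^b|T'|\,ds\ \ge\ \int_a^b|\Gamma'|_\bsigma\,ds=\operatorname{length}_\bsigma(\Gamma)\ \ge\ d_\bsigma(p_\Sigma,q_\Sigma),
\]
and taking the infimum over $\beta$ gives the claim.

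The heart of the argument is the matching upper bound $\hat d_\bsigma(p,q)\le d_\bsigma(p_\Sigma,q_\Sigma)$ for $q\notin J^\pm(p)$, for which I would use a \emph{two-segment null zigzag}. Fix $\varepsilon>0$ and a piecewise smooth curve $\gamma\colon[0,1]\to\Sigma$ (e.g.\ a broken geodesic, parametrized so that $\ell(s):=\operatorname{length}(\gamma|_{[0,s]})$ is piecewise smooth and increasing) from $p_\Sigma$ to $q_\Sigma$ with $L:=\ell(1)<d_\bsigma(p_\Sigma,q_\Sigma)+\varepsilon$. Set $\delta:=\tfrac12\bigl(L+t(q)-t(p)\bigr)$, which lies in $[0,L]$ precisely by the auxiliary inequality above, pick $u$ with $\ell(u)=\delta$, and define
\[
 \beta(s)=\begin{cases}(t(p)+\ell(s),\ \gamma(s)),& s\in[0,u],\\(t(p)+2\delta-\ell(s),\ \gamma(s)),& s\in[u,1].\end{cases}
\]
Since $|(t\circ\beta)'|=|\ell'|=|\gamma'|_\bsigma$ on each piece, $\beta$ is piecewise null (future-directed on $[0,u]$, past-directed on $[u,1]$), it is continuous at $u$, it runs from $\beta(0)=p$ to $\beta(1)=(t(p)+2\delta-L,q_\Sigma)=(t(q),q_\Sigma)=q$, and $\hat L_\tau(\beta)=\int_0^1|\ell'|\,ds=L<d_\bsigma(p_\Sigma,q_\Sigma)+\varepsilon$; letting $\varepsilon\to0$ finishes the proof.

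The only genuine obstacle is this last step: one must move the $\Sigma$-coordinate from $p_\Sigma$ to $q_\Sigma$ using \emph{only causal segments}, and the naive "go sideways, then correct the time'' curve inflates the null length by the extra amount $|t(p)-t(q)|$. The zigzag avoids this because its two null pieces together contribute exactly $\operatorname{length}(\gamma)$ to the total variation of $t$ while still achieving the net time difference $t(q)-t(p)$; that the split lengths $\delta$ and $L-\delta$ are nonnegative is exactly the auxiliary inequality $|t(p)-t(q)|\le d_\bsigma(p_\Sigma,q_\Sigma)$, which is why I would establish it first. Note that connectedness of $\Sigma$ is used (to ensure $d_\bsigma<\infty$) but no completeness is needed anywhere, consistent with the hypotheses.
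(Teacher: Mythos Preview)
Your proof is correct. The causal case and the lower bound in the non-causal case match the paper's argument essentially verbatim (parametrize, use $|T'|\ge|\Gamma'|_\bsigma$, integrate).

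The genuine difference is in the upper bound for $q\notin J^\pm(p)$. The paper lifts an almost-minimizing $\Sigma$-curve $\alpha$ to a piecewise null curve by restricting to the two-dimensional Lorentzian submanifold $\R\times\image(\alpha)$ and invoking the abstract existence of piecewise null connectors there (\cite{SV}*{Lemma~3.5, Remark~3.7}, relying on \cite{FS}); it then argues that the lift can be taken not to backtrack along $\alpha$, so its null length equals $L_\bsigma(\alpha)$. You instead give the lift explicitly as a single two-segment null zigzag, with the break at arclength $\delta=\tfrac12(L+t(q)-t(p))$. This is more elementary and self-contained: no appeal to \cite{SV} or \cite{FS} is needed, and the null length is read off directly as $L$. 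The price is that you must first prove the auxiliary inequality $|t(p)-t(q)|\le d_\bsigma(p_\Sigma,q_\Sigma)$ to guarantee $\delta\in[0,L]$; the paper's abstract route sidesteps this. Either way the argument is clean, but your version makes the geometry of the minimizing curve completely transparent (and shows, incidentally, that two null pieces always suffice).
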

  
\begin{proof}
 Let $p \in \M_\bsigma$. If $q \in J^\pm(p)$, then any causal curve $\beta$ between $p$ and $q$ is minimal and hence $\hat d_\bsigma(p,q) = \hat L_t(\beta) = |t(q) - t(p)|$.
   
 Suppose $q \not\in J^\pm(p)$. By \cite{SV}*{Lemma 3.5} there is a piecewise causal curve $\beta$ joining $p$ and $q$. The curve $\beta$ thus consists of a finite number of (unbroken) causal curves which we call $\beta_1,\ldots,\beta_k$ and assume to be parametrized by time and without loss of generality of the form $\beta_i \colon [t_{i-1},t_i] \to M$ (if $t_{i-1} \geq t_i$ we may change orientation). That is, they are of the form
 \[
  \beta_i(t) = (t,\alpha_i(t)), \qquad t \in [t_{i-1},t_i],
 \]
 where $\alpha_i([t_{i-1},t_i]) \subseteq \Sigma$. Causality of $\beta_i$ implies that
 \[
  \etab_\bsigma(\beta_i',\beta_i') = - 1 + \bsigma(\alpha_i',\alpha_i') \leq 0,
 \]
 hence
 $
  \bsigma(\alpha_i',\alpha_i') \leq 1.
 $
 Thus
 \[
  L_\bsigma(\alpha) = \sum_{i=1}^k L_\bsigma(\alpha_i) \leq  \sum_{i=1}^k |t_{i}-t_{i-1}| = \hat L_t(\beta).
 \]
 Since
 $
  d_\bsigma (p_\Sigma,q_\Sigma) \leq  L_\bsigma(\tilde\alpha)
 $
 holds for any spatial curve $\tilde\alpha$, this implies that
 \begin{align}\label{eq:d1}
  d_\bsigma(p_\Sigma,q_\Sigma) \leq \hat d_\bsigma(p,q).
 \end{align}

 It remains to be shown that equality holds. Let $\varepsilon>0$ and $\alpha \colon [0,L] \to \Sigma$ be a (non-selfintersecting) curve from $p_\Sigma$ to $q_\Sigma$, parametrized by arc length, such that
 \begin{align}\label{Lalpha}
  L_\bsigma(\alpha) < d_\bsigma (p_\Sigma,q_\Sigma) + \varepsilon.
 \end{align}
 We construct a broken null curve $\beta$ from $p$ to $q$ by suitably lifting $\alpha$ to $\M_\bsigma$ such that $\hat L_t(\beta) = L_\bsigma (\alpha)$. Consider the connected manifold $\M_{\tilde\bsigma} = \R \times \image(\alpha)$ with induced Lorentzian metric $\tilde\bsigma = \bsigma|_{\image(\alpha)}$ and the same canonical time function $t$. By \cite{SV}*{Lemma 3.5 \& Remark 3.7} (making use of \cite{FS}) we know there exists a piecewise null curve $\beta$ in $\M_{\tilde\bsigma}$ connecting $p$ and $q$. Since $\image(\beta) \subseteq \R \times \image(\alpha)$ we can assume without loss of generality that $\beta$ is parametrized such that
 \[
  \beta(s) = (t(s),\alpha(s))
 \]
 and has breaking points $s_i$, $i=0,\ldots,k$ (since we can always restrict to $\R \times \image(\alpha|_{[s_i,b]})$ in the $i$-th step, we can assume that $\beta$ does not go ``back and forth'' along $\alpha$). Since $\beta$ is piecewise null with respect to $\tilde\bsigma$, and hence with respect to $\bsigma$, we have that
 \begin{align*}
  \hat L_t(\beta) &= \sum_{i=1}^k |t(\beta(s_{i}))-t(\beta(s_{i-1}))| \\
                &= \sum_{i=1}^k L_{\tilde\bsigma} (\alpha|_{[s_{i-1},s_i]}) = L_\bsigma (\alpha).
 \end{align*}
 Together with \eqref{Lalpha}, we obtain
 \[
  \hat d_\bsigma(p,q) \leq \hat L_t(\beta) = L_\bsigma(\alpha) < d_\bsigma(p_\Sigma,q_\Sigma) + \varepsilon,
 \]
 and therefore, together with \eqref{eq:d1},
 \begin{align}\label{dd}
 \hat d_\bsigma(p,q) = d_\bsigma(p_\Sigma,q_\Sigma)
 \end{align}
 whenever $p$ and $q$ are not causally related, i.e., if $q \not\in J^\pm(p)$ and vice versa. This proves the second case for \eqref{d:genMink}.
\end{proof} 
 
\begin{rmrk}
 In the above proof of Lemma~\ref{lem:nulld1} we considered $\M_\bsigma = \R \times \Sigma$. The same result holds for $\widetilde \M_\bsigma = I \times \Sigma$, where $I$ is an interval. Note that $\Sigma$ does not necessarily need to be complete.
\end{rmrk}

\begin{rmrk}[Causality encoding property]
 Lorentzian products $\M_\bsigma = I \times (\Sigma,\bsigma)$ always satisfy the anti-Lipschitz property for the canonical time function due to the explicit structure \eqref{d:genMink} of the null distance obtained in Lemma~\ref{lem:nulld1}.
\end{rmrk}

 In order to prove that we have a (locally) integral current structure on a warped product $M_f = I \times_f \Sigma$ it therefore remains to be shown that the Lorentzian product manifold $\M_\bsigma$ with null distance $\hat d_\bsigma$ is a (local) integral current space, and to establish a connection between $M_f$ and $\M_\bsigma$. This is shown in Lemma~\ref{lem:intcurrent}, Proposition~\ref{prop:biLip} and Theorem \ref{thm: Warp are integral currents}.

\begin{lem}\label{lem:intcurrent}
 Let $I \subseteq \R$ be an interval and $(\Sigma,\bsigma)$ be a connected complete Riemannian manifold. Let $\M_\bsigma$ be the corresponding Lorentzian product manifold, endowed with the null distance $\hat d_\bsigma$ with respect to the canonical time function $\tau = t$. Let $\bar d_\bsigma$ be the distance function with respect to the Riemannian warped product
 \[
  \bar \etab_\bsigma = dt^2 + \bsigma.
 \]
 Then the identity map
 \[
  \id \colon (I \times \Sigma, \hat d_\bsigma) \to (I \times \Sigma, \bar d_\bsigma),
 \]
 is bi-Lipschitz and a natural local integral current space structure is induced on $\M_\bsigma$. If, in addition, $I \times \Sigma$ is compact then $\M_\bsigma$ is an integral current space.
\end{lem}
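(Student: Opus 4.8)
The plan is to read off the bi-Lipschitz estimate from the explicit formula for $\hat d_\bsigma$ in Lemma~\ref{lem:nulld1}, and then transport the canonical Riemannian integral current along the (bi-Lipschitz) identity map using Remark~\ref{rmrk:intcurrent}.

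First I would record a closed form for the null distance. In a Lorentzian product $I \times \Sigma$ with complete $\Sigma$, one has $q \in J^+(p) \cup J^-(p)$ if and only if $|t_p - t_q| \ge d_\bsigma(p_\Sigma, q_\Sigma)$: a causal curve written as $\beta(t) = (t, \alpha(t))$ satisfies $\bsigma(\alpha', \alpha') \le 1$, so $d_\bsigma(p_\Sigma, q_\Sigma) \le L_\bsigma(\alpha) \le |t_p - t_q|$, while conversely a minimizing $\bsigma$-geodesic (which exists by completeness) reparametrized affinely over the time interval between $t_p$ and $t_q$ has $\bsigma$-speed $d_\bsigma(p_\Sigma,q_\Sigma)/|t_p-t_q| \le 1$ and is therefore causal. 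Feeding this dichotomy into Lemma~\ref{lem:nulld1} merges its two cases into
\[
  \hat d_\bsigma(p,q) = \max\bigl\{\, |t_p - t_q|, \ d_\bsigma(p_\Sigma, q_\Sigma) \,\bigr\}, \qquad p,q \in I \times \Sigma.
\]
Since the Riemannian product $\bar\etab_\bsigma = dt^2 + \bsigma$ has distance $\bar d_\bsigma(p,q) = \bigl(|t_p-t_q|^2 + d_\bsigma(p_\Sigma,q_\Sigma)^2\bigr)^{1/2}$, the elementary bound $\max\{a,b\} \le \sqrt{a^2+b^2} \le \sqrt 2\,\max\{a,b\}$ for $a,b\ge 0$ yields $\tfrac{1}{\sqrt 2}\,\bar d_\bsigma(p,q) \le \hat d_\bsigma(p,q) \le \bar d_\bsigma(p,q)$ for all $p,q$, which is exactly the assertion that $\id\colon (I\times\Sigma, \hat d_\bsigma) \to (I\times\Sigma, \bar d_\bsigma)$ is bi-Lipschitz (with constant $\sqrt 2$).

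For the current structure, the Riemannian manifold $(I\times\Sigma, \bar\etab_\bsigma)$ carries its canonical integral current $\overline T$ (Example~\ref{exRiemanniancurrent}), which I would present through a countable family of precompact, pairwise-disjoint bi-Lipschitz charts $\varphi_i\colon U_i \to I\times\Sigma$ with unit weights as in Definition~\ref{Def current on metric space}, passing to the metric completion of $I\times\Sigma$ if $I$ has finite endpoints. Because $\id$ is bi-Lipschitz, the composites $\id\circ\varphi_i$ are again precompact, pairwise-disjoint bi-Lipschitz charts for $(\M_\bsigma, \hat d_\bsigma)$, so $T := \id_\#\overline T = \sum_i (\id\circ\varphi_i)_\#\llbracket 1\rrbracket$ is an integer rectifiable current on $(\M_\bsigma, \hat d_\bsigma)$; its mass is finite on each chart, bounded by $(\Lip\id)^{n+1}$ times the Riemannian mass, and $\partial T = \id_\#\partial\overline T$ is the pushforward of the integer rectifiable Riemannian boundary current, hence again integer rectifiable. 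Finally, a bi-Lipschitz map distorts $(n+1)$-dimensional Hausdorff measure and lower densities only by bounded factors, so $\operatorname{set}(T) = \M_\bsigma$, and therefore $(\M_\bsigma, \hat d_\bsigma, T)$ is an integral current space.

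The real work here is already contained in Lemma~\ref{lem:nulld1}: once the null distance is known to equal $\max\{|t_p-t_q|, d_\bsigma(p_\Sigma,q_\Sigma)\}$, the bi-Lipschitz estimate is a one-line sandwich and the current structure is transported essentially verbatim from the Riemannian side. The only points that need a little care are the causal characterization used to collapse the two cases of Lemma~\ref{lem:nulld1} into a single $\max$, and---when $I$ is half-open or unbounded---working consistently with the metric completion and a locally finite atlas in Definition~\ref{Def current on metric space}, so that the boundary and $\operatorname{set}$ conditions are genuinely inherited from the Riemannian picture.
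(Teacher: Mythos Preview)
Your proof is correct and follows essentially the same approach as the paper: both split into the causally related/unrelated cases via Lemma~\ref{lem:nulld1}, use the Pythagorean formula of Remark~\ref{rem:Pyt} for $\bar d_\bsigma$, and obtain the same bi-Lipschitz constant $\sqrt{2}$, then push the Riemannian current forward via Remark~\ref{rmrk:intcurrent}. Your explicit identification $\hat d_\bsigma(p,q)=\max\{|t_p-t_q|,d_\bsigma(p_\Sigma,q_\Sigma)\}$ is a clean way to merge the two cases the paper treats separately, and your handling of completeness by passing to the metric completion is a harmless variant of the paper's appeal to Theorem~\ref{Conditions Implying Null Complete 2}.
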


\begin{rmrk}\label{rem:Pyt}
 The proof requires knowledge of the metric structure of products of Riemannian manifolds. Let $M = \Sigma_1 \times \Sigma_2$ be the product of the Riemannian manifolds $(\Sigma_1,\h_1)$ and $(\Sigma_2,\h_2)$ with induced distance functions $d_{\h_1\times\h_2},d_{\h_1},d_{\h_2}$. Suppose $\Sigma_1$ is a uniquely geodesic metric space, that is, any two points are the endpoints of a unique minimizing geodesic. Then the distance function on $M$ is given by the Pythagorean formula
\[
 d_{\h_1\times\h_2} (p,q)^2 = d_{\h_1} (p_1,q_1)^2 + d_{\h_2} (p_2,q_2)^2, \qquad p,q \in M.
\]
 See \cite{DM}*{Section 3.1} for a proof when $\Sigma_1$ and $\Sigma_2$ are complete Riemannian manifolds.
\end{rmrk}

\begin{proof}
 We first assume that $q \in J^{\pm}(p)$. Then by (the proof of) Lemma \ref{lem:nulld1} we have that
 \bee
  \hat{d}_\bsigma (p,q) = |t(q)-t(p)| \geq d_{\bsigma}(p_{\Sigma},q_{\Sigma}),
 \eee
 which together with Remark~\ref{rem:Pyt} implies that
 \begin{align}
  \hat{d}_\bsigma(p,q) &=  |t(q)-t(p)| \nonumber
  \\&\le \bar d_\bsigma(p,q) = \sqrt{|t(q)-t(p)|^2+d_{\bsigma}(p_{\Sigma},q_{\Sigma})^2} \label{FirstBiLip}
  \\&\le \sqrt{2} \, |t(q)-t(p)|=\sqrt{2}\,\hat{d}_\bsigma(p,q).\nonumber
 \end{align}

 Next assume that $q \not \in J^{\pm}(p)$. Then by (the proof of) Lemma \ref{lem:nulld1} we know that
 \begin{align*}
  \hat{d}_\bsigma (p,q) = d_{\bsigma}(p_\Sigma,q_{\Sigma}) \ge |t(q)-t(p)|,
 \end{align*}
 which implies that
 \begin{align}
  \hat{d}_\bsigma (p,q)&=d_{\bsigma}(p_{\Sigma},q_{\Sigma}) \nonumber
  \\&\le \bar d_\bsigma(p,q) = \sqrt{|t(q)-t(p)|^2+d_{\bsigma}(p_{\Sigma},q_{\Sigma})^2} \label{SecBiLip}
  \\&\le \sqrt{2}\,d_{\bsigma}(p_\Sigma,q_\Sigma) = \sqrt{2}\,\hat{d}_\bsigma (p,q).\nonumber
 \end{align}
 Note that \eqref{FirstBiLip} and \eqref{SecBiLip} imply that
 \[
  \hat{d}_\bsigma (p,q) \leq \bar d_\bsigma(p,q) \leq \sqrt{2}\, \hat{d}_\bsigma (p,q).
 \]
 for any $p,q \in I \times \Sigma$.
 Hence the identity map on $I \times \Sigma$ is bi-Lipschitz from $\hat{d}_\bsigma$ to $\bar d_\bsigma$.
 
 Recall that connected complete Riemannian manifolds with continuous metric are local integral current spaces. Let $T$ be this locally integral current on $(I \times \Sigma, \bar{d}_\sigma)$. Then by Theorem~\ref{thm:localintcurrent} we find that $(\M_\bsigma,\hat d_\bsigma, \id_\#T)$ is a local integral current space. By Theorem~\ref{thm:intcurrent} it is an integral current space when $I \times \Sigma$ is compact.
\end{proof}

\subsection{Spacetime warped products}

 The formula \eqref{d:genMink} can be generalized to warped products with merely bounded warping function $f$ if we estimate the Riemannian distance function using the bounds of $f$. Since we always use the canonical time function $\tau(t,x) =t$, we simply write $\hat d_f$ for the null distance $\hat d_t$ corresponding to the causal structure of the warped product $I \times_f \Sigma$. We also write $\hat L$ for the corresponding null length $\hat L_t$. 

\begin{lem}\label{d:warped}
 Let $I$ be an interval, $(\Sigma,\bsigma)$ be a connected Riemannian manifold, and $f$ be a bounded function such that
 \[
  0 < f_{\min} \leq f(t) \leq f_{\max}, \qquad t \in I.
 \]
 Then the warped product $M_f = I \times_f \Sigma$ with Lorentzian metric
  \[
  \g_f = -dt^2 + f(t)^2 \bsigma
 \]
 is such that the null distance $\hat d_f$ of $\g_f$ with canonical time function $t$ satisfies
 \begin{equation}\label{eqd:warped}
 \begin{split}
  \hat d_f(p,q) &= |t(p) - t(q)|, \qquad \qquad \quad \, \, q \in J^\pm(p), \\ 
  f_{\min} d_\bsigma(p_\Sigma,q_\Sigma) \leq \hat d_f&(p,q) \leq f_{\max} d_\bsigma(p_\Sigma,q_\Sigma), \qquad q \not\in J^\pm(p).
 \end{split}
 \end{equation}
\end{lem}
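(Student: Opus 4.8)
The strategy mirrors the proof of Lemma~\ref{lem:nulld1}, but now we carry along the scaling factor $f$. First I would dispose of the causal case: if $q \in J^\pm(p)$, then any causal curve from $p$ to $q$ has null length $|t(q)-t(p)|$ regardless of the warping function (since null length only sees the time function $\tau = t$, and a single causal arc is automatically $\hat L_t$-minimizing among piecewise causal curves with the same endpoints by \cite{SV}*{Lemma 3.6}). Hence $\hat d_f(p,q) = |t(p)-t(q)|$, which is the first line of \eqref{eqd:warped}.

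For the non-causal case $q \not\in J^\pm(p)$, I would relate the warped product $M_f = I \times_f \Sigma$ to the Lorentzian product $\M_\bsigma = I \times \Sigma$ of Definition~\ref{Def generalized Minkowski}, whose null distance is given by \eqref{d:genMink}. The key observation is that the warped metric $\g_f = -dt^2 + f(t)^2 \bsigma$ is conformal-like in the spatial directions: a curve $\beta(s) = (t(s),\alpha(s))$ parametrized by time is $\g_f$-causal iff $\bsigma(\alpha',\alpha') \le f(t)^{-2}$. Since $0 < f_{\min} \le f(t) \le f_{\max}$, causality with respect to $\g_f$ is sandwiched between causality with respect to two rescaled product metrics $-dt^2 + f_{\min}^2\bsigma$ and $-dt^2 + f_{\max}^2\bsigma$. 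Concretely, for the lower bound: given any piecewise causal curve $\beta$ in $M_f$ from $p$ to $q$, its time-parametrized causal pieces satisfy $\bsigma(\alpha_i',\alpha_i') \le f(t)^{-2} \le f_{\min}^{-2}$, so $L_\bsigma(\alpha_i) \le f_{\min}^{-1}|t_i - t_{i-1}|$ on each piece; summing and using $d_\bsigma(p_\Sigma,q_\Sigma) \le L_\bsigma(\alpha) = \sum_i L_\bsigma(\alpha_i)$ gives $f_{\min} d_\bsigma(p_\Sigma,q_\Sigma) \le \hat L(\beta)$, hence $f_{\min} d_\bsigma(p_\Sigma,q_\Sigma) \le \hat d_f(p,q)$ after taking the infimum over $\beta$.

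For the upper bound I would reverse the construction from Lemma~\ref{lem:nulld1}: fix $\varepsilon > 0$ and an arclength-parametrized spatial curve $\alpha \colon [0,L] \to \Sigma$ from $p_\Sigma$ to $q_\Sigma$ with $L_\bsigma(\alpha) < d_\bsigma(p_\Sigma,q_\Sigma) + \varepsilon$. Working inside $\R \times \image(\alpha)$ with the restricted warped metric, I invoke \cite{SV}*{Lemma 3.5 \& Remark 3.7} to get a piecewise null curve $\beta$ over $\alpha$ connecting $p$ and $q$; its time-parametrized pieces are $\g_f$-null, so $\bsigma(\alpha_i',\alpha_i') = f(t)^{-2} \ge f_{\max}^{-2}$, giving $|t_i - t_{i-1}| = \int f(t)\, |\alpha_i'|_\bsigma \, ds \le f_{\max} L_\bsigma(\alpha|_{[s_{i-1},s_i]})$. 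Summing yields $\hat L(\beta) \le f_{\max} L_\bsigma(\alpha) < f_{\max}(d_\bsigma(p_\Sigma,q_\Sigma) + \varepsilon)$, and letting $\varepsilon \to 0$ gives $\hat d_f(p,q) \le f_{\max} d_\bsigma(p_\Sigma,q_\Sigma)$.

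The main technical point to be careful about is the same one that appears in Lemma~\ref{lem:nulld1}: when $f$ is merely bounded (not smooth), one must ensure that piecewise causal/null curves still exist and can be chosen to lie over a prescribed spatial curve $\alpha$ and to be monotone along it (not running ``back and forth''). This is handled exactly as before by restricting at each step to $\R \times \image(\alpha|_{[s_i,b]})$ and applying \cite{SV}*{Lemma 3.5 \& Remark 3.7} (which rely on \cite{FS}); the low regularity of $f$ does not obstruct the existence of piecewise null curves since the relevant arguments are about the light cone structure, which is still well-defined for bounded $f$ bounded away from zero. A minor subtlety is that in the upper-bound construction the parametrization must be adjusted so that $\beta$ projects onto $\alpha$; this is purely a reparametrization and does not affect $\hat L$.
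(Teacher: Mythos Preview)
Your proposal is correct and follows essentially the same approach as the paper: both handle the causal case trivially, obtain the lower bound by time-parametrizing the causal pieces of an arbitrary piecewise causal curve and using $\bsigma(\alpha',\alpha') \le f(t)^{-2} \le f_{\min}^{-2}$, and obtain the upper bound by lifting an almost $d_\bsigma$-minimizing spatial curve $\alpha$ to a piecewise null curve via \cite{SV}*{Lemma 3.5 \& Remark 3.7} and using $\bsigma(\alpha',\alpha') = f(t)^{-2} \ge f_{\max}^{-2}$. Your write-up is in fact slightly more careful than the paper's in that you treat general causal (not just null) pieces for the lower bound and explicitly flag the low-regularity and monotonicity issues in the lift construction.
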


\begin{proof}
 Essentially we can apply the same proof as in Section~\ref{ssec:genMink} above. Let $p,q \in M$ and $\beta(s)=(t(s),\alpha(s))$ be a null causal curve joining $p$ and $q$. The fact that
 \begin{align}\label{gfest}
  \g_f(\beta'(t),\beta'(t)) = -1 + f(t)^2 \bsigma(\alpha'(t),\alpha'(t)) = 0,
 \end{align}
 however, implies that $\bsigma(\alpha',\alpha') \leq \frac{1}{f_{\min}^2}$ and thus
 \[
  L_\bsigma(\alpha) \leq \frac{1}{f_{\min}} \hat L(\beta).
 \]
 Therefore,
 \[
 f_{\min} d_\bsigma(p_\Sigma,q_\Sigma) \leq \hat d_f(p,q).
 \]
 In the second part of the proof, where a null curve $\beta$ is constructed from an (almost) $\Sigma$-minimizing $\alpha$, \eqref{gfest} implies that $\bsigma(\alpha',\alpha') \geq \frac{1}{f_{\max}^2}$. Therefore,
 \[
   \frac{1}{f_{\max}} \hat d_{f}(p,q) \leq \frac{1}{f_{\max}} \hat L(\beta) \leq L_\bsigma(\alpha) \leq d_\bsigma(p,q) +\varepsilon,
 \]
 which shows that
 \[
  \hat d_{f}(p,q) \leq f_{\max} d_\bsigma(p,q)
 \]
 if $q \not\in J^\pm(p)$.
\end{proof}

\begin{prop}\label{prop:biLip}
 Let $(\Sigma,\bsigma)$ be a connected Riemannian manifold. For a fixed closed interval $I$, let $f$ be a bounded function on $I$ such that
  \[
   0< f_{\min} \leq f(t) \leq f_{\max} < \infty, \qquad t\in I.
  \]
  Let $M_f = I \times_f \Sigma$ be the warped product with Lorentzian metric
  \[
   \g_f = -dt^2 + f(t)^2 \bsigma.
  \]
 Then for all $p,q \in I \times \Sigma$
 \begin{align}\label{dfestimate}
 \min \{1, f_{\min} \} \, \hat d_\bsigma (p,q) \leq \hat d_f (p,q) \leq  \max \{1,
  f_{\max} \} \, \hat d_\bsigma (p,q), 
 \end{align}
 where $\hat d_\bsigma$ and $\hat d_f$ are the null distances of the Lorentzian product $\M_\bsigma$ and the warped product $M_f$, respectively, both with respect to the canonical time function $\tau=t$ (see Definition~\ref{Def generalized Minkowski}).
\end{prop}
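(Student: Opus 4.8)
\emph{Proof proposal.} The strategy is to express both null distances in terms of the two elementary quantities $\Delta t := |t(p)-t(q)|$ and $D := d_\bsigma(p_\Sigma,q_\Sigma)$ via the explicit descriptions already obtained, and then to finish with an elementary inequality between maxima. I emphasize at the outset that the causal structures of $\g_f$ and $\etab_\bsigma$ do \emph{not} coincide: parametrized by $t$, a curve is $\etab_\bsigma$-causal iff its spatial speed is $\le 1$ and $\g_f$-causal iff it is $\le 1/f(t)$, so these light cones are fibrewise rescaled and in general incomparable. Hence one cannot simply transport piecewise causal curves between the two metrics, and reducing everything to $\Delta t$ and $D$ is what circumvents this.

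First I would record two preliminary facts. (a) For every generalized time function $\tau$ one has $\hat d_\tau(p,q)\ge|\tau(p)-\tau(q)|$, since the defining sum $\hat L_\tau$ over the breaking points of any admissible curve telescopes; with $\tau=t$ this gives $\hat d_f(p,q)\ge\Delta t$ and $\hat d_\bsigma(p,q)\ge\Delta t$. (b) In the product $\etab_\bsigma=-dt^2+\bsigma$ one has $q\in J^\pm(p)$ if and only if $D\le\Delta t$: a $t$-parametrized causal curve has spatial length $\le\Delta t$, so $q\in J^\pm(p)$ forces $D\le\Delta t$; conversely, if $D<\Delta t$ then rescaling a near-minimizing spatial curve to the interval $[t(p),t(q)]$ produces a timelike curve from $p$ to $q$, so $q\in I^\pm(p)$. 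Combining (b) with \lemref{lem:nulld1} yields
\[
 \hat d_\bsigma(p,q)=\max\{\Delta t,\,D\}.
\]

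Next I would sandwich the warped null distance,
\[
 \max\{\Delta t,\,f_{\min}D\}\;\le\;\hat d_f(p,q)\;\le\;\max\{\Delta t,\,f_{\max}D\}.
\]
The upper bound is immediate from \lemref{d:warped}: if $q\in J^\pm_{\g_f}(p)$ then $\hat d_f(p,q)=\Delta t$, and otherwise $\hat d_f(p,q)\le f_{\max}D$. For the lower bound, $\hat d_f(p,q)\ge\Delta t$ by (a), while the estimate in the proof of \lemref{d:warped} — for any piecewise causal curve $\beta(s)=(t(s),\alpha(s))$ of $\g_f$ one has $\bsigma(\alpha',\alpha')\le f_{\min}^{-2}\,\dot t^2$ on each piece, hence $L_\bsigma(\alpha)\le f_{\min}^{-1}\hat L(\beta)$ — holds for \emph{all} $p,q$ (not only the $\g_f$-spacelike-separated ones) and gives $f_{\min}D\le\hat d_f(p,q)$; taking the maximum proves the left inequality.

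It then remains to combine the two displays using the elementary fact that for nonnegative reals $a,b$ and $0<c\le C$,
\[
 \min\{1,c\}\max\{a,b\}\le\max\{a,\,cb\},\qquad \max\{a,\,Cb\}\le\max\{1,C\}\max\{a,b\}.
\]
Applying this with $a=\Delta t$, $b=D$, $c=f_{\min}$, $C=f_{\max}$ turns the sandwich for $\hat d_f$ together with $\hat d_\bsigma=\max\{\Delta t,D\}$ into exactly the claimed estimate \eqref{dfestimate}. The only genuinely non-routine points are fact (b) and the observation that the lower bound $f_{\min}D\le\hat d_f(p,q)$ persists when $p$ and $q$ are $\g_f$-causally related (where \lemref{d:warped} as stated only records $\hat d_f=\Delta t$); the rest is bookkeeping with maxima.
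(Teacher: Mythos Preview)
Your argument is correct and is a genuinely different packaging from the paper's. The paper proceeds by a four-way case split according to whether $q$ lies in $J^+_{\etab_\bsigma}(p)$, in $J^+_{\g_f}(p)$, in both, or in neither, and in each case compares $\hat d_f$ and $\hat d_\bsigma$ directly using Lemmas~\ref{lem:nulld1} and~\ref{d:warped}. You instead reduce both null distances to closed-form expressions in the two scalars $\Delta t$ and $D$ --- namely $\hat d_\bsigma=\max\{\Delta t,D\}$ and the sandwich $\max\{\Delta t,f_{\min}D\}\le\hat d_f\le\max\{\Delta t,f_{\max}D\}$ --- and then finish with an elementary inequality between maxima. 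Your key observation that the lower bound $f_{\min}D\le\hat d_f(p,q)$ from the proof of Lemma~\ref{d:warped} holds for \emph{all} $p,q$ (not only the $\g_f$-spacelike-separated ones) is exactly what makes this uniform treatment possible; the borderline $D=\Delta t$ in your fact~(b) is harmless since both branches of Lemma~\ref{lem:nulld1} then agree. What your approach buys is brevity and a clean two-sided estimate on $\hat d_f$ that does not reference either causal relation. What the paper's case analysis buys is sharper, case-specific constants (e.g.\ $\hat d_f\le\hat d_\bsigma\le f_{\min}^{-1}\hat d_f$ in its Case~2), and these refined bounds --- together with knowledge of \emph{which} cases can occur for a given curve --- are invoked later in the proof of Proposition~\ref{prop:pointwise convergence} (cf.\ Remark~\ref{rmrk:biLip}). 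So your proof fully establishes the proposition as stated, but would need supplementing if one later wants those case-by-case refinements.
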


\begin{rmrk}\label{rmrk:biLip}
 A more refined estimate than \eqref{dfestimate} based on the precise causal relation of $p$ and $q$ with respect to both $\g_f$ and $\etab_\bsigma$ is obtained in the proof of Proposition~\ref{prop:biLip}. Depending on $f$, not all cases may occur (in which case the argument would lead to a contradiction). However, since case 1 always occurs in a small neighborhood, one cannot omit the constants $1$ in the estimate \eqref{dfestimate}.
\end{rmrk} 
 
\begin{proof}
 Depending on the causal relation of $p$ and $q$ we have to distinguish several cases. Without loss of generality we can assume that $t(q)\geq t(p)$. By $J^\pm_1(p)$ and $J^\pm_f(p)$ we denote the causal past/future of $p$ with respect to $\etab_\bsigma$ and $\g_f$, respectively. We use $\hat L_1$ and $\hat L_f$ to denote the null length of the respective metrics with respect to the canonical time function $\tau = t$, and $L_\bsigma$ and $d_\bsigma$ to denote the Riemannian length and distance in $(\Sigma,\bsigma)$.
 
 \textit{Case 1: $q \in J^+_1(p) \cap J^+_f(p)$.} In this case $p \leq q$ in both metrics and therefore,
 \[
  \hat d_f(p,q) = t(q)-t(p) = \hat d_\bsigma(p,q),
 \]
 because all causal curves are distance-realizing.
 
 \textit{Case 2: $q \in J^+_f(p) \setminus J^+_1(p)$.} Suppose $\beta$ is a future-directed causal curve in $J^+_f(p)$ connecting $p$ and $q$. Then
 \begin{align}\label{eqdf1}
  \hat d_f(p,q) = \hat L_f(\beta) = t(q)-t(p) \leq \hat d_\bsigma(p,q).
 \end{align}
 Since $\beta$ is causal with respect to $\g_f$, we may assume that it is parametrized by time, i.e., $\beta(t) = (t,\alpha(t))$ for some curve $\alpha$ in $\Sigma$. Thus
 \[
  \g_f(\beta'(t),\beta'(t)) = -1 + f(t)^2 \bsigma(\alpha'(t),\alpha'(t)) \leq 0,
 \]
and hence $\|\alpha'\|_\bsigma \leq \frac{1}{f_{\min}}$. This implies
 \[
  d_\bsigma(p_\Sigma,q_\Sigma) \leq L_\bsigma(\alpha) \leq \frac{1}{f_{\min}} |t(q)-t(p)|.
 \]
 By Lemma \ref{lem:nulld1} and Lemma \ref{d:warped} together with \eqref{eqdf1} we obtain that
 \[
  \hat d_f (p,q) \leq \hat d_\bsigma(p,q) = d_\bsigma(p_\Sigma,q_\Sigma) \leq \frac{1}{f_{\min}} \hat d_f(p,q).
 \]
 
 \textit{Case 3: $q \in J^+_1(p) \setminus J^+_f(p)$.} Then for a future-directed causal curve $\beta(t) = (t,\alpha(t))$ in $J^+_1(p)$ we have
 \begin{align}\label{eqds1}
  \hat d_\bsigma (p,q) = \hat L_{1}(\beta) = t(q) - t(p) \leq \hat d_f(p,q) .
 \end{align}
 Causality of $\beta$ implies that
 \[
  \etab_\bsigma (\beta'(t),\beta'(t)) = -1 + \bsigma (\alpha'(t),\alpha'(t)) \leq 0,
 \]
 and therefore that $\| \alpha' \|_\bsigma \leq 1$. Thus
 \[
  d_\bsigma(p_\Sigma,q_\Sigma) \leq L_\bsigma(\alpha) \leq |t(q)-t(p)|.
 \]
 By Lemma \ref{lem:nulld1} and Lemma \ref{d:warped} together with \eqref{eqds1} we hence obtain
 \[
  \frac{1}{f_{\max}} \hat d_f(p,q) \leq d_\bsigma (p_\Sigma,q_\Sigma) \leq \hat d_\bsigma(p,q) \leq \hat d_f(p,q).
 \]
 
 \textit{Case 4: $q \not\in J^+_1(p) \cup J^+_f(p)$.} If $q$ is in neither of the light cones, then by Lemma~\ref{lem:nulld1}
 \begin{align*}
  \hat d_\bsigma(p,q) = d_\bsigma(p_\Sigma,q_\Sigma),
 \end{align*}
 and by Lemma \ref{d:warped} we know that
 \begin{align*}
  f_{\min} d_\bsigma(p_\Sigma,q_\Sigma) \leq \hat d_f&(p,q) \leq f_{\max} d_\bsigma(p_\Sigma,q_\Sigma).
 \end{align*}
 Hence the statement follows immediately in this case.
\end{proof}

\begin{rmrk}
 We expect that Proposition \ref{prop:biLip} can be extended to multi-warped products and to spacetimes $N_h = I \, {}_h \! \times (\Sigma, \bsigma)$ of the form
  \[
   \g_h = - h^2 dt^2 + \bsigma,
  \]
 where $h$ is a positive bounded function on $\Sigma$. Singular static metrics appear, for instance, when the Einstein equations are coupled to matter fields \cite{AnB,BKTZ}. Using spacetime convergence results, it may be possible to study their spacetime stability using techniques analogous to \cite{BKS,HLS,LeeS,LFS}.
\end{rmrk}

 Combining the above results, Proposition \ref{prop:biLip}, Theorem \ref{thm:intcurrent}, and Theorem \ref{thm:localintcurrent} yields that we can induce a natural (local) integral current space structure on warped product spacetimes $I \times_f (\Sigma,\bsigma)$ with warping functions $f$ of low regularity and continuous Riemannian metrics $\bsigma$.
   
\begin{customthm}{\ref{thm: Warp are integral currents}}
 Let $I$ be an interval and $(\Sigma,\bsigma)$ be a connected complete Riemannian manifold. Suppose $f \colon I \to (0,\infty)$ is a bounded function that is bounded away from $0$. There is a natural local integral current space structure on the warped product spacetime $M = I \times_f \Sigma$ with respect to the null distance $\hat d_f$.  If $I \times \Sigma$ is compact then $(M,\hat d_f)$ carries an integral current space structure. 
\end{customthm}
  
\begin{proof}
 By Lemma~\ref{lem:intcurrent} the Lorentzian product $I \times \Sigma$ is a (local) integral current space with respect to $\hat d_\bsigma$. Since the identity $\id \colon (I\times\Sigma,\hat{d}_\bsigma) \to (I \times \Sigma, \hat d_f)$ is bi-Lipschitz by Proposition~\ref{prop:biLip}, the pushforward Theorems~\ref{thm:intcurrent} and \ref{thm:localintcurrent} imply that the warped product spacetime $M$ with null distance $\hat d_f$ can be induced with a (local) integral current structure as well.
\end{proof}

\subsection{Globally hyperbolic spacetimes}

 We  use the results of the previous subsections to show that globally hyperbolic spacetimes are (local) integral current spaces with respect to their corresponding null distances. We begin with a result which allows us to compare null distance functions between general Lorentzian metrics and warped product metrics. We adapt the following (slightly modified) $\preccurlyeq$ relation from \cite{CG}*{Section 1.2}.
 
\begin{defn}\label{def:relation}
 Let $M$ be a connected manifold with Lorentzian metrics $\g_1,\g_2$. We say that $\g_1$ is \emph{smaller} than $\g_2$, denoted by $\g_1 \preccurlyeq \g_2$, if for all tangent vectors $v \neq 0$ the implication
  \[
   \g_1(v,v) \leq 0 \Longrightarrow \g_2(v,v) \leq 0
  \]
 is satisfied.
\end{defn}
 
 In other words, if $\g_1 \preccurlyeq \g_2$ then the light cones with respect to $\g_2$ are wider than those of $\g_1$.
 
\begin{ex}\label{ex:order}
 If $(\Sigma,\bsigma)$ is a connected Riemannian manifold and $f_1,f_2$ are two warping functions that satisfy $0 <f_1 \leq f_2$ everywhere, then the warped product metrics on $I \times \Sigma$ satisfy $\g_{f_2} \preccurlyeq \g_{f_1}$
 and the corresponding null distances satisfy $\hat d_{f_1}(p,q) \leq \hat d_{f_2} (p,q)$.
\end{ex}

The observation regarding the null distance in Example~\ref{ex:order} can be generalized.

\begin{lem}\label{lem: biLip Inequality}
 Let $M$ be a connected manifold and $\g_1, \g, \g_2$ be Lorentzian metrics on $M$ such that 
 \begin{align}\label{crucialInequality1}
  \g_1 \preccurlyeq \g \preccurlyeq \g_2.
 \end{align}
 Then for a time function $\tau \colon M \to \R$ the corresponding null distances satisfy
 \begin{align*}
 \hat{d}_{\g_2}(p,q) \le \hat{d}_{\g}(p,q) \le \hat{d}_{\g_1}(p,q), \qquad p,q \in M.
 \end{align*}
\end{lem}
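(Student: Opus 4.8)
The plan is to reduce the inequality to a monotonicity statement for infima over nested families of curves. The essential observation is that the null length of a piecewise causal curve $\beta$ with breaking points $a=s_0<s_1<\dots<s_k=b$,
\[
 \hat L_\tau(\beta)=\sum_{i=1}^k|\tau(\beta(s_i))-\tau(\beta(s_{i-1}))|,
\]
does not involve the Lorentzian metric in any way: it depends only on $\tau$, on $\beta$, and on the chosen breaking points. Hence a metric enters the definition of the corresponding null distance only through the specification of which curves count as admissible, namely the piecewise causal ones for that metric. So it suffices to establish the inclusions $\mathcal{A}_{\g_1}(p,q)\subseteq\mathcal{A}_{\g}(p,q)\subseteq\mathcal{A}_{\g_2}(p,q)$ of admissible classes, where $\mathcal{A}_{\g_i}(p,q)$ denotes the set of piecewise $\g_i$-causal curves from $p$ to $q$.

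To establish these inclusions I would argue as follows. By Definition~\ref{def:relation}, $\g_1\preccurlyeq\g$ means precisely that every nonzero $\g_1$-causal tangent vector is $\g$-causal, and similarly $\g\preccurlyeq\g_2$. Therefore every smooth $\g_1$-causal segment is a smooth $\g$-causal segment; and since a $\g$-causal curve defined on a connected parameter interval is automatically either everywhere future-directed or everywhere past-directed with respect to $\g$ (the two $\g$-causal cone components at each point being disjoint), such a segment is a legitimate piece of a piecewise $\g$-causal curve. Thus a curve $\beta\in\mathcal{A}_{\g_1}(p,q)$, with the same breaking points, lies in $\mathcal{A}_{\g}(p,q)$, and the analogous argument gives $\mathcal{A}_{\g}(p,q)\subseteq\mathcal{A}_{\g_2}(p,q)$. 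Since $\hat L_\tau$ assigns a given curve the same value no matter which of the three metrics certifies it as piecewise causal, taking the infimum of $\hat L_\tau$ over a larger family can only decrease its value, so
\[
 \hat d_{\g_2}(p,q)\le\hat d_{\g}(p,q)\le\hat d_{\g_1}(p,q),\qquad p,q\in M,
\]
which is the assertion. (If an admissible class happens to be empty its infimum is $+\infty$ and the inequality is trivial; but by \cite{SV}*{Lemma 3.5} all three classes are nonempty since $M$ is connected.)

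The only point requiring care — and hence the main obstacle — is the bookkeeping around time orientations and the admissibility of $\tau$. To even speak of piecewise $\g_1$- and $\g_2$-causal curves, $\g_1$ and $\g_2$ must be time-oriented; because the $\g_1$-, $\g$-, and $\g_2$-causal cones are nested, the time orientation of $\g$ induces compatible orientations on $\g_1$ and $\g_2$, with which a future-directed $\g_1$-causal vector is future-directed $\g$-causal and a future-directed $\g$-causal vector is future-directed $\g_2$-causal. One should also check that $\tau$ remains an admissible (generalized) time function for $\g_1$ and $\g_2$, so that $\hat d_{\g_1}$ and $\hat d_{\g_2}$ are defined: for $\g_1$ this is automatic, since a future-directed $\g_1$-causal curve is a future-directed $\g$-causal curve along which $\tau$ strictly increases; for $\g_2$ it belongs to the hypotheses and is immediate in the applications to warped products, where $\tau$ is essentially the time coordinate. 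Everything else in the argument is formal.
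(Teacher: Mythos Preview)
Your proof is correct and follows essentially the same approach as the paper: both argue that $\g_1\preccurlyeq\g\preccurlyeq\g_2$ implies the inclusions $\mathcal{A}_{\g_1}\subseteq\mathcal{A}_{\g}\subseteq\mathcal{A}_{\g_2}$ of admissible piecewise causal curves, and then observe that the null distance is an infimum over these classes. Your version is more carefully fleshed out, in particular the remarks on time orientation and on $\tau$ being a time function for $\g_1$ and $\g_2$, but the core idea is identical.
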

   
\begin{proof}
 Property \eqref{crucialInequality1} makes sure that the light cones of the different Lorentzian manifolds are contained in one another. In particular,  if $\alpha$ is a piecewise causal curve for $\g_1$ then $\alpha$ is a piecewise causal curve for $\g$. Similarly, if $\beta$ is a piecewise causal curve for $\g$, then $\beta$ is a piecewise causal curve for $\g_2$. Since the null distance between $p$ and $q$ is defined as an infimum over piecewise causal curves these inclusions imply the desired conclusion,
 \[
  \hat{d}_{\g_2}(p,q) \le \hat{d}_{\g}(p,q) \le \hat{d}_{\g_1}(p,q). \qedhere
 \]
\end{proof}
  
 We generalize Lemma~\ref{lem: biLip Inequality} to a local version which is utilized below and also interesting in its own right. The proof is based on a similar argument as the proof of \cite{B}*{Theorem 4.5} in the Riemannian case, however, the assumptions are necessarily stronger.
  
\begin{prop}\label{prop: dist comparison}
 Let $M$ be a connected manifold, equipped with Lorentzian metrics $\g$ and $\h$. Let $U$ be an open set such that $\g\preccurlyeq \h$ holds on $U$. Suppose $\tau$ is a time function with respect to both $\g$ and $\h$ such that the null distances $\hat d_\g$ and $\hat d_\h$ with respect to $\tau$ are metrics. Then for every compact set $K \subset U$ there exists a constant $C>0$ (possibly depending on $K$) such that
 \begin{align}\label{bi-Lipestimate}
  \hat d_\h(p,q) \leq C \hat d_\g (p,q), \qquad p,q \in K.
 \end{align}
\end{prop}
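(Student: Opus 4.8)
The plan is to reduce the statement to a compactness argument combined with the local comparison of light cones. Since $\g \preccurlyeq \h$ holds on the open set $U$ and $K \subset U$ is compact, I would first choose a slightly larger compact set $K'$ with $K \subset \operatorname{int}(K') \subset K' \subset U$, and cover $K$ by finitely many small coordinate charts in which both $\g$ and $\h$ are close to constant-coefficient Lorentzian metrics. In each such chart one can sandwich $\h$ between two warped product (or even Lorentzian product) metrics $\g_1 \preccurlyeq \h \preccurlyeq \g_2$ with constant warping factors, obtained by slightly opening and slightly closing the light cones of $\h$ by a uniform amount; the key point is that $\g \preccurlyeq \h$ on all of $U$ allows us to compare $\g$ with the ``small'' model $\g_1$ as well. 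Applying Lemma~\ref{lem: biLip Inequality} locally (to the chain $\g_1 \preccurlyeq \g \preccurlyeq \g_2$, with the middle inequality coming from $\g \preccurlyeq \h \preccurlyeq \g_2$) together with Proposition~\ref{prop:biLip}, which controls the null distance of a warped product by that of a Lorentzian product with explicit constants, would give a bound of the form $\hat d_\h(p,q) \leq C_\alpha \hat d_\g(p,q)$ for $p,q$ in the $\alpha$-th chart, where $C_\alpha$ depends only on the oscillation of the metric coefficients there.

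The main technical work is then to patch these local estimates into a global one on $K$. Here I would invoke that both $\hat d_\g$ and $\hat d_\h$ are genuine metrics inducing the manifold topology (by hypothesis), so there is a Lebesgue-number type radius $\rho>0$ such that any two points $p,q \in K$ with $\hat d_\g(p,q) < \rho$ lie in a common chart of the cover, giving $\hat d_\h(p,q) \leq C \hat d_\g(p,q)$ with $C = \max_\alpha C_\alpha$ for such nearby points. For points with $\hat d_\g(p,q) \geq \rho$, compactness of $K$ bounds $\hat d_\h$ on $K \times K$ from above by some constant $D$ (using continuity of $\hat d_\h$ and compactness), so $\hat d_\h(p,q) \leq D \leq (D/\rho)\hat d_\g(p,q)$. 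Taking the maximum of $C$ and $D/\rho$ yields the desired constant.

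The step I expect to be the main obstacle is the local sandwiching: given $\g \preccurlyeq \h$ only as a cone inclusion, one must be careful that the intermediate warped-product models $\g_1, \g_2$ can actually be chosen so that $\g_1 \preccurlyeq \g$ (not merely $\g_1 \preccurlyeq \h$) on the chart, since it is the comparison with $\g$, not $\h$, that must survive. This is why the hypothesis needs to be stronger than in the Riemannian case (where one compares a single metric with bi-Lipschitz-close ones): the chain of $\preccurlyeq$ relations must be arranged so that $\g$ sits between the small model and $\h$. Making the construction of $\g_1$ uniform over the finite cover—so that the constants $C_\alpha$ are genuinely finite and the Lebesgue number $\rho$ is positive—is the delicate part; once that is in place, the rest is the routine compactness patching sketched above. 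I would also need to double-check that restricting piecewise causal curves to stay inside a chart does not increase the null distance infimum too much, which again follows because the null distance is a metric inducing the manifold topology, so locally it is realized (up to $\varepsilon$) by curves staying in a prescribed neighborhood.
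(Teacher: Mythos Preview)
Your overall framework---a local comparison estimate followed by a Lebesgue-number and compactness patching---is sound and is essentially the direct counterpart of the paper's contradiction argument. But your local step contains a genuine gap. You propose to sandwich $\h$ between warped-product models $\g_1 \preccurlyeq \h \preccurlyeq \g_2$ and then arrange a chain $\g_1 \preccurlyeq \g \preccurlyeq \g_2$. The relation $\g \preccurlyeq \g_2$ does follow from $\g \preccurlyeq \h \preccurlyeq \g_2$, but the relation $\g_1 \preccurlyeq \g$ cannot be extracted from the hypotheses: you are given only an \emph{upper} bound on the light cones of $\g$ (they sit inside those of $\h$), never a lower bound. There is nothing preventing the cones of $\g$ from being arbitrarily narrow on your chart, so no fixed product model $\g_1$ will satisfy $\g_1 \preccurlyeq \g$. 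You flag this yourself as ``the main obstacle,'' and indeed it is not a technicality but a real obstruction to the sandwiching route.

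The paper's local step is far simpler and bypasses the models entirely. The key observation you are missing is that the null length $\hat L_\tau(\beta)$ depends only on $\tau$ and on the breakpoints of $\beta$, \emph{not} on which Lorentzian metric declares $\beta$ causal. Hence if $\beta$ is a piecewise $\g$-causal curve whose image lies in $U$, then $\g \preccurlyeq \h$ on $U$ makes $\beta$ piecewise $\h$-causal as well, with $\hat L_\h(\beta)=\hat L_\g(\beta)$. Consequently, for any $p,q$ close enough that a near-minimizing $\g$-curve stays inside $U$, one gets $\hat d_\h(p,q)\le \hat d_\g(p,q)$ with constant $1$. The paper packages this via contradiction: if the estimate failed, bad pairs $(p_n,q_n)$ would accumulate at a single point $p\in K$; choosing $r_0$ with $\overline{B^{\g}_{r_0}(p)}\subset U$ and $r=r_0/4$ forces any curve of $\hat L_\g$ at most $\hat d_\g(x,y)+\varepsilon<3r$ between $x,y\in B^\g_r(p)$ to remain in $B^\g_{r_0}(p)\subset U$, yielding $\hat d_\h\le \hat d_\g$ on $B^\g_r(p)$ and a contradiction.

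If you want to keep your direct architecture, replace the sandwiching by this observation: for each $p\in K$ find $r(p)>0$ so that near-minimizing $\g$-curves between points of $B^\g_{r(p)}(p)$ stay in $U$ (the $r_0/4$ trick above does exactly this), giving $\hat d_\h\le \hat d_\g$ on that ball with $C=1$. Then your Lebesgue-number step and the far-points bound $\hat d_\h\le D\le (D/\rho)\hat d_\g$ go through verbatim. In effect you and the paper run the same argument; the paper does it by contradiction, you do it directly, but neither needs the warped-product comparison of Proposition~\ref{prop:biLip} at all.
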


\begin{proof}
 We proceed by contradiction. Suppose \eqref{bi-Lipestimate} does not hold. Then for all $n \in \N$ there exist points $p_n,q_n \in K$ such that
 \begin{align}\label{assumption1}
  \hat d_\h(p_n,q_n) > n \hat d_\g(p_n,q_n).
 \end{align}
 Since $K$ is compact, and both $\hat d_\g, \hat d_\h$ are metrics that induce the manifold topology, we can assume that the sequences $(p_n)_n$ and $(q_n)_n$ converge to the same limit $p \in K$ by passing to subsequences.
 
 Since $M$ is locally compact there exists an $r_0>0$ such that the ball $\overline{B_{r_0}^\g (p)} = \{ q \in M : \hat d_\g(p,q) \leq r_0\}$ is compact and contained in $U$. Consider $r := \frac{r_0}{4}$ and arbitrary points $x,y \in B_r^\g(p)$. Then, for every $\varepsilon \in (0,r)$, there exists a (piecewise) causal curve $\beta_\varepsilon$ such that
 \[
  \hat L_\g(\beta_\varepsilon) < \hat d_\g (x,y) + \varepsilon.
 \]
 This curve $\beta_\varepsilon$ does not leave $B^\g_{r_0}(p)$ since for all $s$
 \begin{align*}
  \hat d_\g(p,\beta_\varepsilon(s)) &\leq \hat d_\g(p,x) + \underbrace{\hat d_\g(x,\beta_\varepsilon(s))}_{\leq \hat L_\g(\beta_\varepsilon)} \\
   &\leq \hat d_\g (p,x) + \hat d_\g(x,y) + \varepsilon \\
   &< r + 2r+ r = 4r = r_0.
 \end{align*}
 By assumption, $\g \preccurlyeq \h$ on $B_{r_0}^\g(p) \subseteq U$, hence $\beta_\varepsilon$ is also piecewise causal with respect to $\h$ and therefore $\hat L_\g(\beta_\varepsilon) = \hat L_\h (\beta_\varepsilon)$ and
 \begin{align*}
  \hat d_\h(x,y) \leq \hat L_\h(\beta_\varepsilon) = \hat L_\g(\beta_\varepsilon) < \hat d_\g(x,y) +\varepsilon.
 \end{align*}
 Since this estimate holds for all $\varepsilon \leq r$ this implies that
 \[
  \hat d_\h (x,y) \leq \hat d_\g(x,y), \qquad x,y \in B^\g_r(p).
 \]
 In particular, since the points $p_n$ and $q_n$ converge to $p$ we have that for sufficiently large $n$
 \[
  \hat d_\h (p_n,q_n) \leq \hat d_\g (p_n,q_n),
 \]
 which contradicts \eqref{assumption1}.
\end{proof}

 We apply an orthogonal splitting and use the above relation on chart neighborhoods and for particular Lorentzian product metrics $\g_1$ and $\g_2$. This induces a current on $(M,\hat{d}_\tau)$ via locally bi-Lipschitz maps.
  
\begin{prop}\label{prop: Globally Hyperbolic Are Integral Current Spaces}
 Let $(M,\g)$ be a globally hyperbolic spacetime. Then there exists a (smooth) time function $\tau$,  $(M,\g)$  is isometric to $(\R\times \Sigma, \tilde{\g})$ where $\tilde{\g}$ is a Lorentzian metric with Cauchy surfaces $(\Sigma_\tau,\bsigma_\tau=\bsigma|_{\Sigma_\tau})$, and 
  $\id \colon (\R \times \Sigma,\hat d_{\bsigma_0}) \to (\R \times \Sigma, \hat d_{\tilde \g})$ is locally bi-Lipschitz.
\end{prop}

\begin{proof}
 Bernal and S\'{a}nchez prove in \cite{BS} that any globally hyperbolic spacetime admits a smooth time function $\tau$ such that $\nabla \tau$ is everywhere past-pointing timelike. More precisely, $(M,\g)$  is isometric to $(\R\times \Sigma, \tilde{\g})$ with
 \begin{align}
 \tilde{\g} = -h^2 d \tau^2 + \bsigma,
 \end{align}
 where $\Sigma$ is a smooth spacelike Cauchy hypersurface, $\tau \colon \R \times \Sigma \rightarrow \R$ is the natural projection (and time function), $h^2 \colon \R\times \Sigma \rightarrow (0,\infty)$ is a smooth function, and $\bsigma$ is a symmetric 2-tensor field on $\R\times\Sigma$. Moreover, on each constant-$\tau$ hypersurface $\Sigma_\tau$ the restriction $\bsigma_\tau := \bsigma|_{\Sigma_\tau}$ is a Riemannian metric.
 
 Let $(U_i,u_i)_i$ be a countable atlas of precompact open sets on $\R \times \Sigma$ and let $V_i$ be precompact open sets so that $\overline{U}_i \subset V_i$. Since $h^2 >0$ and $V_i$ is precompact, there exists a constant $c_i$ such that $h^2|_{V_i} \in [\frac{1}{c_i^2},c_i^2]$. Moreover, we can compare each Riemannian metric $\bsigma_\tau$ on $V_i\cap \Sigma_\tau$ to $\bsigma_0$ since there exist continuous functions $\lambda_0,\mu_0 > 0$ such that
 \[
  \lambda_0(\tau)^2 \bsigma_0 (v,v) \leq \bsigma_\tau (v,v) \leq \mu_0(\tau)^2 \bsigma_0(v,v),
 \]
 for all $v \in T_{p'} \Sigma$, $(\tau,p') \in U_i \cap \Sigma_\tau$, which follows immediately from the proofs in \cite{B}*{Proposition 4.1 \& (4.2)}. For $a,b\in \R \setminus \{ 0 \}$ we define the reference product metric
 \[
  \tilde\g_{a,b} := - a^2d\tau^2 + b^2 \bsigma_0,
 \] 
 Due to the pre-compactness of $V_i$ we have positive constants
 \begin{align*}
  \lambda_i &:= \inf \lambda_0(\tau(V_i)) >0, \\
  \mu_i &:= \sup \mu_0(\tau(V_i)) \geq \lambda_i > 0,
 \end{align*}
 which implies that for $w \in T_p(\R\times \Sigma)$, $p \in V_i$,
 \begin{align*}
  \tilde\g_{c_i,\lambda_i} (w,w) \leq \tilde\g (w,w)  \leq  \, \tilde\g_{\frac{1}{c_i},\mu_i}(w,w),
 \end{align*}
 which in the notation of Definition~\ref{def:relation} says that locally on $V_i$ we have \[\tilde\g_{\frac{1}{c_i},\mu_i}\preccurlyeq\tilde \g\preccurlyeq\tilde\g_{c_i,\lambda_i}.\]
 Since $U_i$ is precompact, by Proposition~\ref{prop: dist comparison} there exists a constant $C_i>0$ such that
 \[
  \frac{1}{C_i} \hat d_{\tilde\g_{c_i,\lambda_i}}(p,q) \leq \hat d_{\tilde\g}(p,q) \leq C_i \hat d_{\tilde\g_{\frac{1}{c_i},\mu_i}}(p,q), \qquad p,q \in U_i.
 \]
 By \cite{SV}*{Proposition 3.9} the null distance is invariant under conformal changes, hence
  \[
  \frac{1}{C_i} \hat d_{\tilde\g_{1,\frac{\lambda_i}{c_i}}}(p,q) \leq \hat d_{\tilde\g}(p,q) \leq  C_i \hat d_{\tilde \g_{1,c_i\mu_i}}(p,q), \qquad p,q \in U_i.
 \]
By Proposition~\ref{prop:biLip}, we can estimate the left and right null distances further to obtain for the constant $A_i = C_i \max \{ c_i\mu_i,\frac{c_i}{\lambda_i},1 \} > 0$ that
 \[
  \frac{1}{A_i} \hat d_{\bsigma_0}(p,q) \leq \hat d_{\tilde\g}(p,q) \leq  A_i \hat d_{\bsigma_0}(p,q), \qquad p,q \in U_i.
 \]
 which means that the identity map  $\id \colon (\R \times \Sigma,\hat d_{\bsigma_0}) \to (\R \times \Sigma, \hat d_{\tilde \g})$ is bi-Lipschitz on $U_i$. 
\end{proof}

\begin{customthm}{\ref{thm: Globally Hyperbolic Are Integral Current Spaces}}
 Let $(M,\g)$ be a globally hyperbolic spacetime with (smooth) time function $\tau$ of Proposition~\ref{prop: Globally Hyperbolic Are Integral Current Spaces}. Suppose $M$ admits Cauchy hypersurfaces on which the ambient Lorentzian metric restricts as a complete Riemannian metric and $(M,\hat d_\tau)$ is complete as a metric space. Then $(M,\hat{d}_\tau)$ is a local integral current space.
If $M$ is compact then it is an integral current space.
\end{customthm}

\begin{proof}
 By Proposition~\ref{prop: Globally Hyperbolic Are Integral Current Spaces} $M =\R \times \Sigma$ and the identity $\id \colon (M,\hat d_{\bsigma_0}) \to (M, \hat d_{\g})$ is locally bi-Lipschitz. Recall that $\hat d_{\bsigma_0}$ is the null distance of the product spacetime $\M_{\bsigma_0}$ and thus carries a natural (local) integral current space structure by Lemma~\ref{lem:intcurrent}. By Theorem~\ref{thm:lengthspace} it is a locally compact length space, because it carries the manifold topology. Moreover, since $(\Sigma,\bsigma_0)$ is complete, so is the Riemannian product $(\R \times \Sigma, d\tau^2 + \bsigma_0)$ and hence the Lorentzian product $(M,\hat d_{\bsigma_0})$ is complete by Corollary~\ref{cor:complete}. Hence $(M,\hat{d}_{\bsigma_0})$ is a proper metric space (see Remark~\ref{rmrk:proper}). Hence the locally Lipschitz identity $\id \colon (M,\hat d_{\bsigma_0}) \to (M, \hat d_{\g})$ is Lipschitz on bounded sets by Remark~\ref{rmrk:proper}. Similarly, the assumption that $(M,\hat d_\tau)$ is complete implies that it is proper, and the inverse map $\id \colon (M, \hat d_{\g}) \to (M,\hat d_{\bsigma_0})$ is also Lipschitz on bounded sets.
 Hence by Theorem~\ref{thm:localintcurrent} we can push forward the (local) integral current structure via the locally bi-Lipschitz identity to $(M,\hat d_{\tilde \g})$.
 
 The compact case follows immediately from Proposition~\ref{prop: Globally Hyperbolic Are Integral Current Spaces} and Theorem~\ref{thm:intcurrent}.
\end{proof}

\begin{rmrk}
 Recall that a time-oriented Lorentzian manifold $(M,\g)$ is called globally hyperbolic if and only if it is causal and if for every pair of points $p,q \in M$ the set $J^+(p) \cap J^-(q)$ is compact (in fact, Hounnonkpe and Minguzzi recently showed the surprising result that the causal condition is not even needed for noncompact manifolds with spacetime dimensions larger than three \cite{HM}*{Theorem 2.8}). Lorentzian products $I \times \Sigma$ and warped products $I \times_f \Sigma$ are globally hyperbolic if and only if the Riemannian manifold $(\Sigma,\bsigma)$ is complete (see \cite{BEE}*{Theorem 3.66}). This is the setting in which we studied both cases earlier, hence Theorem~\ref{thm: Globally Hyperbolic Are Integral Current Spaces} naturally generalizes Lemma~\ref{lem:intcurrent} and Theorem~\ref{thm: Warp are integral currents} on the integral current structure of $I\times\Sigma$ and $I\times_f\Sigma$, respectively.
\end{rmrk}

\begin{rmrk}
 We believe Theorem~\ref{thm: Globally Hyperbolic Are Integral Current Spaces} can be extended to weaker notions of causality or related local assumptions. In particular, the regularity assumptions may be weakened. 
 
 Moreover, it should be straightforward to drop the completeness assumptions in Theorem~\ref{thm: Globally Hyperbolic Are Integral Current Spaces} (as well as in the earlier (warped) product results Theorem~\ref{thm: Warp are integral currents} and Lemma~\ref{lem:intcurrent}) by using a definition of local integral current spaces that is based on the locally integral currents of Lang~\cite{L} (see also our earlier Remark~\ref{rmrk:Lang}).
\end{rmrk}
 
 
 \section{Convergence of warped product spacetimes}\label{sect-Convergence}
 

 By Section~\ref{sect-Integral Current Spaces} we now know that warped product spacetimes (of low regularity) and globally hyperbolic spacetimes with complete Cauchy surfaces are (local) integral current spaces with respect to the null distance. We are thus in a position to study the limits of such sequences of spacetimes under GH and SWIF convergence. In this section we initiate the study of spacetime convergence for the class of warped products. In particular, we provide a general statement for uniformly converging warping functions (see Theorem~\ref{thm:fuconv}) and address the distinct limiting behavior for non-uniformly converging sequences in Section~\ref{subsec:examples}. 

\subsection{Uniform convergence for warped product spacetimes}

 In this subsection we state sufficient conditions on a sequence of warping functions $f_j$, $f_j \rightarrow f_{\infty}$, which guarantee that the sequence of corresponding warped product spacetimes with associated null distance structure converges in the GH and SWIF topology to the warped product spacetime with warping function $f_{\infty}$. The strategy of the proof is to show a general pointwise convergence result and then use the bi-Lipschitz bounds of Section~\ref{sect-Integral Current Spaces} to obtain the uniform, GH, and SWIF convergence result via a compactness result of Huang, Lee and Sormani (discussed in Section~\ref{subsec: GH and SWIF results}).
 
\begin{prop}\label{prop:pointwise convergence}
 Let $I$ be an interval and $(\Sigma,\bsigma)$ be a connected Riemannian manifold. Suppose $(f_j)_j$ is a sequence of bounded continuous functions $f_j \colon I \to (0,\infty)$ (uniformly bounded away from $0$) and $M_j = I \times_{f_j} \Sigma$ are warped products with Lorentzian metric tensors
 \[
  \g_j = -dt^2 + f_j(t)^2 \bsigma.
 \]
 Assume that $(f_j)_j$ converges uniformly to a limit function
 \[ f_\infty(t) = \lim_{j\to\infty} f_j(t). \]
 Then the corresponding null distances $\hat d_j$ of $\g_j$, $j \in \N \cup \{\infty\}$, with respect to the canonical time function $\tau(t,x)=t$ converge pointwise, i.e., for any $p,q \in M = I \times \Sigma$ it follows that
  \begin{align*}
   \lim_{j \to \infty} \hat d_j (p,q) = \hat d_\infty (p,q).
  \end{align*}
\end{prop}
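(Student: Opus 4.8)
The plan is to establish the pointwise convergence $\hat d_j(p,q) \to \hat d_\infty(p,q)$ by sandwiching each $\hat d_j$ between two comparable null distances and exploiting the monotonicity of the null distance in the warping function (Example~\ref{ex:order}). Given $\varepsilon > 0$, uniform convergence $f_j \to f_\infty$ provides $N$ such that $|f_j(t) - f_\infty(t)| < \varepsilon$ for all $t \in I$ and $j \geq N$. Setting $f_\infty^- := f_\infty - \varepsilon$ and $f_\infty^+ := f_\infty + \varepsilon$ (both still bounded away from $0$ and from above, after possibly shrinking $\varepsilon$ using the uniform lower bound on the $f_j$), we have $f_\infty^- \leq f_j \leq f_\infty^+$ pointwise on $I$ for $j \geq N$. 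By Example~\ref{ex:order}, the corresponding null distances satisfy
\[
 \hat d_{f_\infty^+}(p,q) \leq \hat d_j(p,q) \leq \hat d_{f_\infty^-}(p,q), \qquad j \geq N,
\]
and applying the same comparison to $f_\infty$ itself gives $\hat d_{f_\infty^+}(p,q) \leq \hat d_\infty(p,q) \leq \hat d_{f_\infty^-}(p,q)$. Hence
\[
 |\hat d_j(p,q) - \hat d_\infty(p,q)| \leq \hat d_{f_\infty^-}(p,q) - \hat d_{f_\infty^+}(p,q)
\]
for all $j \geq N$, so it suffices to show that the right-hand side tends to $0$ as $\varepsilon \to 0$.

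To control $\hat d_{f_\infty^-}(p,q) - \hat d_{f_\infty^+}(p,q)$ I would use the quantitative estimates of Lemma~\ref{d:warped} (and Proposition~\ref{prop:biLip}) rather than just the qualitative ordering. Fix $p, q$. If $q \in J^\pm(p)$ with respect to \emph{both} $\g_{f_\infty^+}$ and $\g_{f_\infty^-}$ (equivalently, the smaller cone), then both null distances equal $|t(p) - t(q)|$ and the difference is exactly $0$. If $q$ lies outside both light cones (the larger one), then Lemma~\ref{d:warped} gives $f^\pm_{\min} d_\bsigma(p_\Sigma,q_\Sigma) \leq \hat d_{f_\infty^\pm}(p,q) \leq f^\pm_{\max} d_\bsigma(p_\Sigma, q_\Sigma)$, and since $f^\pm_{\min}, f^\pm_{\max} \to f_{\infty,\min}, f_{\infty,\max}$ is not quite enough — here I would instead run the comparison more carefully against $\hat d_{f_\infty}$ directly, using that on the region where $q$ is spacelike-separated in all three metrics one has $\hat d_{f_\infty^\pm}(p,q)$ and $\hat d_{f_\infty}(p,q)$ all trapped between $(f_{\infty,\min} - O(\varepsilon)) d_\bsigma$ and $(f_{\infty,\max} + O(\varepsilon)) d_\bsigma$... which still is not a genuine smallness statement. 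The clean fix is to re-examine the proof of Lemma~\ref{d:warped}: the approximating broken-null curve for $\hat d_{f_\infty}(p,q)$ is built by lifting an almost-$d_\bsigma$-minimizing spatial curve $\alpha$; the \emph{same} curve $\alpha$ can be lifted to a broken null curve for $\g_{f_\infty^+}$ and for $\g_{f_\infty^-}$, and along it the null length is $\int |(\tau\circ\beta)'| = \int f(t) \|\alpha'\|_\bsigma\, ds$ up to the parametrization, so the null lengths differ by at most $\varepsilon \cdot L_\bsigma(\alpha) \leq \varepsilon(d_\bsigma(p_\Sigma, q_\Sigma) + \varepsilon)$. This yields $|\hat d_j(p,q) - \hat d_\infty(p,q)| \leq 2\varepsilon\, d_\bsigma(p_\Sigma, q_\Sigma) + O(\varepsilon^2)$ on that region, which is the desired smallness for fixed $p,q$. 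For the mixed cases (the causal relationship of $p,q$ differs among the metrics) one combines the two pieces along a curve that is partly causal and partly the lifted spatial curve, as in Cases 2–3 of the proof of Proposition~\ref{prop:biLip}, and the same $O(\varepsilon)$ bound survives.

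The main obstacle I anticipate is exactly this case analysis on how the causal character of the pair $(p,q)$ varies with the warping function: perturbing $f_\infty$ by $\pm\varepsilon$ widens or narrows the light cones, so a pair that is null-related for $f_\infty$ may be timelike-related for $f_\infty^+$ and spacelike-related for $f_\infty^-$, and the "obvious" bound $\hat d_{f_\infty^-} - \hat d_{f_\infty^+}$ must be shown to degenerate to $0$ uniformly in these transitional configurations. The key technical point that makes it work is that the \emph{approximating} curves (causal arcs and lifted minimizing spatial arcs) can be chosen to be admissible for all three metrics simultaneously once $\varepsilon$ is small enough relative to a fixed almost-minimizer, and that the null length of a fixed piecewise-smooth curve depends on the warping function only through $\int f(t)\|\alpha'\|\,ds$, hence continuously and with modulus controlled by $\|f_j - f_\infty\|_\infty$ times the (finite) $\bsigma$-length. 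I would organize the final write-up as: (1) reduce to the sandwich inequality via Example~\ref{ex:order}; (2) prove the one-curve lifting lemma giving $|\hat L_{g_{f}}(\beta) - \hat L_{g_{\tilde f}}(\beta)| \leq \|f - \tilde f\|_\infty L_\bsigma(\alpha)$ for lifts $\beta$ of a common spatial curve $\alpha$; (3) combine with near-minimizers for $\hat d_\infty$ to get the upper bound $\hat d_j \leq \hat d_\infty + \varepsilon(d_\bsigma(p_\Sigma,q_\Sigma)+1)$ and symmetrically the lower bound; (4) let $\varepsilon \to 0$.
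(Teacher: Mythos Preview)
Your sandwich reduction via Example~\ref{ex:order} is a sound and elegant first move, and it is \emph{not} what the paper does; the paper works directly with near-minimizers for $\hat d_j$ and $\hat d_\infty$ without ever invoking monotonicity. Note, however, that you have the direction of Example~\ref{ex:order} reversed: $f_1 \le f_2$ gives $\hat d_{f_1} \le \hat d_{f_2}$, so $f_\infty^- \le f_j \le f_\infty^+$ yields $\hat d_{f_\infty^-} \le \hat d_j \le \hat d_{f_\infty^+}$ (and likewise for $\hat d_\infty$). This is cosmetic.

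The substantive gap is in your step (2), the ``one-curve lifting lemma''. The null length $\hat L(\beta)$ of a \emph{fixed} curve $\beta$ does not depend on the warping function at all: it is $\int |(\tau\circ\beta)'|\,ds$, a function of $\tau$ and $\beta$ only. What depends on $f$ is (i) which curves are admissible (piecewise causal), and (ii) what the null \emph{lift} of a given spatial curve $\alpha$ looks like. If you lift $\alpha$ to a piecewise null curve for $g_f$ and for $g_{\tilde f}$, you get two \emph{different} curves in $M$ whose time components solve different ODEs $\dot t = \pm f(t)$ vs.\ $\dot t = \pm \tilde f(t)$; in particular, starting from $p$ they do not end at the same point, so you cannot use both as competitors for the null distance from $p$ to $q$ without further correction. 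Your proposed bound $\|f-\tilde f\|_\infty L_\bsigma(\alpha)$ ignores the discrepancy $f(t_f(s)) - \tilde f(t_{\tilde f}(s))$ coming from the different time tracks $t_f \ne t_{\tilde f}$, and it ignores the endpoint mismatch entirely. Controlling these requires knowing that $f_\infty$ does not vary much along each piece---in other words, a localization to intervals where $f_\infty$ is nearly constant.

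That localization is precisely what the paper carries out: it takes a near-minimizer $\beta$, uses uniform continuity of $f_\infty$ on the compact set $t(\operatorname{im}\beta)$ to subdivide $\beta$ into pieces on which $f_\infty$ oscillates by at most $\varepsilon$, and then on each piece compares all the $\hat d_j$ (and $\hat d_\infty$) to the null distance of a \emph{constant}-warping-function reference via Proposition~\ref{prop:biLip}. This turns the global (useless) ratio $f_{\max}/f_{\min}$ into a local ratio $(f_\infty(t_i)+2\varepsilon)/(f_\infty(t_i)-2\varepsilon) = 1 + O(\varepsilon)$ on each piece, and summing over pieces gives $\hat d_j(p,q) = (1+O(\varepsilon))\hat d_\infty(p,q) + O(\varepsilon)$. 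Your sandwich reduces the problem to $\hat d_{f_\infty+\varepsilon}-\hat d_{f_\infty-\varepsilon}\to 0$, but you would still need essentially this same subdivision argument to finish; Lemma~\ref{d:warped} alone only gives the global ratio $f_{\max}/f_{\min}$ and cannot close the gap.
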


\begin{proof}
 Due to the uniform convergence $f_j \to f_\infty$, it follows that $f_\infty$ is continuous and bounded as well. In particular, $f_\infty$ is bounded away from $0$ by some constant $f_{\infty,\min}>0$.
 
 Let $p,q \in M$ and $\varepsilon \in (0,\frac{f_{\infty,\min}}{4})$. Then, uniform convergence of $(f_j)_j$ implies for all $j$ sufficiently large, say, $j \geq j_0$,
 \begin{align}\label{fjuniform}
  \| f_j - f_\infty \|_{\infty} < \varepsilon.
 \end{align}
 
 We start with a general observation on suitably broken causal curves which holds for all $\g_j$ and the limit $\g_\infty$. For each $j \geq j_0$ we construct in Step 1 a particular set of subintervals related to a (broken) causal curve, which will be used in Step 2 and 3 to estimate the difference of $\hat d_j(p,q)$ to $\hat d_\infty(p,q)$.
 
\textit{Step 1: Construction of subintervals.} Fix $j \geq j_0$.
 Let $\beta_j$ be a (possibly broken) $\g_j$-causal curve such that
 \begin{align}\label{beta}
  \hat L (\beta_j) < \hat d_j (p,q) + \varepsilon.
 \end{align}
 Consider the time projection of $\beta_j$, more precisely, the compact interval $I_{j} := t(\operatorname{im}(\beta_j))$. Since $f_\infty$ is uniformly continuous on $I_{j}$ there exists a $\delta>0$ such that for all $s,t \in I_j$
 \begin{align}\label{uniform}
  |t-s| < 2\delta \Longrightarrow |f_\infty(t) - f_\infty(s)|<\varepsilon.
 \end{align}
 We can cover $I_{j}$ by finitely many $\delta$-intervals $(t_{2i}-\delta,t_{2i}+\delta)$ (numbered with increasing time). Choose furthermore $t_{2i+1} \in (t_{2i}-\delta,t_{2i}+\delta) \cap (t_{2i+2}-\delta,t_{2i+2}+\delta) \neq \emptyset$. Add the breaking points of $\beta_j$, i.e., the times $t_j = t(\beta_j(s_j))$ of the parameter values $s_j$ where $\beta_j$ changes from past- to future-directed and vice versa. Now, we consider all preimages $s_j$ of points on $\beta_j$ with times $t_j$ on $\beta_j$, and number the (possibly repeating) times according to their appearance on $\beta_j$, i.e., such that
 \[
  t(\beta_{j}(s_i)) = t_i.
 \]
 Without loss of generality we can assume that all breaking points have even parameter indices.
 Recall that due to the construction of the $s_i$'s and $t_i$'s and \eqref{uniform} we know, in particular, that
 \[
  f_\infty([t_{2i},t_{2i+2}]) \subseteq [f_\infty(t_{2i+1})-\varepsilon, f_\infty(t_{2i+1})+\varepsilon].
 \]
 Since $j \geq j_0$, \eqref{fjuniform} implies
 \[
  f_j([t_{2i},t_{2i+2}]) \subseteq [f_\infty(t_{2i+1})-2\varepsilon, f_\infty(t_{2i+1})+2\varepsilon].
 \]
 Note that for every $j \in \N \cup \{\infty\}$ we have
 \[
  \hat L(\beta_j) = \sum_i \hat d_j (\beta_{j}(s_{2i}),\beta_{j}(s_{2i-2})) = \sum_i |t_{2i} - t_{2i-2}|.
 \]
 In order to obtain upper and lower bounds of $\hat d_j (p,q)$ involving $\hat d_\infty(p,q)$ and $\varepsilon$, we estimate all summands $\hat d_j$ by employing Proposition~\ref{prop:biLip} in a clever way.
 
 \textit{Step 2: Upper bound.}
 Fix $j \geq j_0$ and consider the associated path $\beta_j$ and subintervals (indexed by $i$) from Step 1. Consider $\beta_\infty$ as constructed above with respect to $\g_\infty$ and $f_\infty$. We aim at estimating all parts $\hat d_\infty (\beta_\infty(s_{2i}),\beta_\infty(s_{2i-2}))$ by $\hat d_j (\beta_\infty(s_{2i}),\beta_\infty(s_{2i-2}))$, for all $j$ sufficiently large. To this end we note that
 \begin{align}\label{gjbsigma}
  \g_j &= -dt^2 + f_j(t)^2 \bsigma = -dt^2 + \frac{f_j(t)^2}{(f_\infty(t_{2i-1})-\varepsilon)^2} (f_\infty(t_{2i-1})-2\varepsilon)^2 \bsigma.
 \end{align}
 Here,
 \[
  \tilde \bsigma = (f_\infty(t_{2i-1})-2\varepsilon)^2 \bsigma
 \]
 is a conformal Riemannian metric, and by construction $\g_j \preccurlyeq \etab_{\tilde\bsigma}$ and $\g_\infty \preccurlyeq \etab_{\tilde\bsigma}$. Thus we can apply the right inequality of \eqref{dfestimate} in Proposition~\ref{prop:biLip} to obtain
 \begin{align*}
  \hat d_j (\beta_\infty& (s_{2i}),\beta_\infty(s_{2i-2})) \\
  &\leq \max \left\{ 1, \max_{t\in[t_{2i-2},t_{2i}]} \frac{f_j(t)}{f_\infty(t_{2i-1})-2\varepsilon} \right\} \hat d_{\tilde\bsigma} (\beta_\infty(s_{2i}),\beta_\infty(s_{2i-2})) \\
  &\leq \frac{f_\infty(t_{2i-1})+2\varepsilon}{f_\infty(t_{2i-1})-2\varepsilon} \, \hat d_{\tilde\bsigma} (\beta_\infty(s_{2i}),\beta_\infty(s_{2i-2})).
 \end{align*}
 Note that in the second line it is enough to consider the maximum on $[t_{2i-2},t_{2i}]$ because the only relevant cases in the proof of Proposition~\ref{prop:biLip} are Cases 1 and 3 where $\beta_{\infty}$ is causal with respect to $\etab_{\tilde \bsigma}$ by construction.
 Hence also
 \begin{align*}
  \hat d_{\tilde\bsigma} (\beta_\infty(s_{2i}),\beta_\infty(s_{2i-2})) = \hat d_\infty (\beta_\infty(s_{2i}),\beta_\infty(s_{2i-2})),
 \end{align*}
 and thus together
 \begin{align*}
  \hat d_j (\beta_\infty(s_{2i}),\beta_\infty(s_{2i-2})) \leq \frac{f_\infty(t_{2i-1})+2\varepsilon}{f_\infty(t_{2i-1})-2\varepsilon} \, \hat d_\infty (\beta_\infty(s_{2i}),\beta_\infty(s_{2i-2})).
 \end{align*}
 The constant can be estimated globally by
 \[
  \frac{f_\infty(t_{2i-1})+2\varepsilon}{f_\infty(t_{2i-1})-2\varepsilon} = 1 + \frac{4\varepsilon}{f_\infty(t_{2i-1})-2\varepsilon} \leq 1 + \frac{4\varepsilon}{f_{\infty,\min}-2\varepsilon} \leq 1 + \frac{8\varepsilon}{f_{\infty,\min}}.
 \]
 Summing up we have
 \begin{align*}
  \hat d_j (p,q) &\leq \sum_i \hat d_j (\beta_{\infty}(s_{2i}),\beta_{\infty}(s_{2i-2})) \\
  &\leq \left( 1 + \frac{8\varepsilon}{f_{\infty,\min}} \right) \sum_i \hat d_\infty (\beta_\infty(s_{2i}),\beta_\infty(s_{2i-2})) \\
  &= \left( 1 + \frac{8\varepsilon}{f_{\infty,\min}} \right) \hat L(\beta_{\infty})
 \end{align*}
 Thus by the choice of $\beta_\infty$, more precisely \eqref{beta}, this yields an upper bound for $\hat d_j$, 
 \begin{align}
  \hat d_j (p,q) &\leq  \left( 1 + \frac{8\varepsilon}{f_{\infty,\min}} \right) \left( \hat d_\infty(p,q) + \varepsilon \right) \nonumber \\
  &\leq \hat d_\infty(p,q) + \varepsilon \left( 1 +  \frac{8\varepsilon}{f_{\infty,\min}} + \frac{8}{f_{\infty,\min}} \hat d_\infty(p,q) \right). \label{upperbound}
 \end{align}
 Since $p,q$ is assumed to be fixed, the second summand is small for all sufficiently large $j$.
 
 \textit{Step 3: Lower bound.}
 Fix $j \geq j_0$ and consider the associated path $\beta_j$ and subintervals (indexed by $i$) from Step 1. As in the case of the upper bound we can write $\g_j$ and $\g_\infty$ in terms of $\tilde\bsigma$ via \eqref{gjbsigma}. Hence for all parts of $\beta_j$ we know that 
 \[
  \hat d_j(\beta_j(s_{2i}),\beta_j(s_{2i-2})) = |t_{2i}-t_{2i-2}| = \hat d_{\tilde\bsigma} (\beta_j(s_{2i}),\beta_j(s_{2i-2})).
 \]
 Moreover, since only the Cases 1 and 3 are relevant on $[t_{2i-2},t_{2i}]$ in the proof of Proposition~\ref{prop:biLip} we obtain that
 \begin{align*}
  \hat d_{\tilde\bsigma} (\beta_j&(s_{2i}), \beta_j(s_{2i-2})) \\
  &\geq \frac{1}{\max \left\{ 1, \max_{t \in [t_{2i-2},t_{2i}]} \frac{f_\infty(t)}{f_\infty (t_{2i-1})-2\varepsilon} \right\}} \, \hat d_\infty (\beta_j(s_{2i}),\beta_j(s_{2i-2})) \\
  &\geq \frac{f_\infty (t_{2i-1})-2\varepsilon}{f_\infty(t_{2i-1})+\varepsilon} \, \hat d_\infty (\beta_j(s_{2i}),\beta_j(s_{2i-2}))
 \end{align*}
 Similar to the previous case we know that
 \[
  \frac{f_\infty (t_{2i-1})-2\varepsilon}{f_\infty(t_{2i-1})+\varepsilon} = 1 - \frac{3\varepsilon}{f_\infty(t_{2i-1})+\varepsilon} \geq 1 - \frac{3\varepsilon}{f_{\infty,\min}}
 \]
 and thus 
 \[
  \hat d_j (\beta_j(s_{2i}), \beta_j(s_{2i-2})) \geq \left(1 - \frac{3\varepsilon}{f_{\infty,\min}}\right) \hat d_\infty (\beta_j(s_{2i}),\beta_j(s_{2i-2})).
 \]
 Due to the choice of $\beta_j$ in \eqref{beta} and the triangle inequality we obtain the pointwise lower bound
 \begin{align*}
  \hat d_j (p,q) &> \hat L(\beta_j) - \varepsilon \\
  &= \sum_i \hat d_{\tilde\bsigma} (\beta_j(s_{2i}), \beta_j(s_{2i-2})) - \varepsilon \\
  &\geq \left(1 - \frac{3\varepsilon}{f_{\infty,\min}}\right) \sum_i \hat d_\infty (\beta_j(s_{2i}),\beta_j(s_{2i-2})) - \varepsilon \\
  &\geq \left(1 - \frac{3\varepsilon}{f_{\infty,\min}}\right) \hat d_\infty (p,q) - \varepsilon,
 \end{align*}
 and hence
 \begin{align}\label{lowerbound}
  \hat d_j(p,q) \geq \hat d_\infty (p,q) - \varepsilon \left( 1 + \frac{3}{f_{\infty,\min}} \hat d_\infty (p,q) \right).
 \end{align}
 
  Combining the bounds \eqref{upperbound} and \eqref{lowerbound} we thus obtain the desired pointwise convergence
 \[
  \lim_{j\to\infty} \hat d_j(p,q) = \hat d_\infty(p,q). \qedhere
 \]
\end{proof}

 In what follows we establish the bi-Lipschitz bounds needed to for Theorem~\ref{HLS-thm}. While continuity of the warping functions $f_j$ and completeness of $\Sigma$ is crucial for the convergence result, they are not necessary to obtain these bounds.

\begin{prop}\label{prop:lambda}
 Let $(\Sigma,\bsigma)$ be a connected Riemannian manifold, and $I$ be an interval. Suppose $(f_j)_j$ is a sequence of uniformly bounded functions $f_j \colon I \to \R$ (bounded away from $0$) and $M_j = I \times_{f_j} \Sigma$ are warped products with Lorentzian metric tensors
 \[
  \g_j = -dt^2 + f_j(t)^2 \bsigma.
 \]
 Then there exists a constant $\lambda>1$ such that for any $j \in \N$
 \begin{align}\label{lestimate}
  \frac{1}{\lambda} \leq \frac{\hat d_j(p,q)}{\hat d_\bsigma(p,q)} \leq \lambda
 \end{align}
 on $M$, where $\hat d_j$ denotes the null distance corresponding to $f_j$ with respect to the canonical time function $\tau(t)=t$ and $\hat d_\bsigma$ denotes the null distance with respect to the reference Lorentzian product metric $\etab_\bsigma = -dt^2 +\bsigma$.
\end{prop}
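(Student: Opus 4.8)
The plan is to deduce \eqref{lestimate} from the global bi-Lipschitz comparison of Proposition~\ref{prop:biLip}, the only new feature being that the resulting constant can be chosen uniformly in $j$. Since each metric $\g_j = -dt^2 + f_j(t)^2\bsigma$ depends on $f_j$ only through $f_j^2$, we may replace $f_j$ by $|f_j|$ and hence assume $f_j > 0$ without changing the null distances $\hat d_j$. The hypothesis that the $f_j$ are uniformly bounded and (uniformly) bounded away from $0$ then provides constants
\[
 0 < f_{\min} \leq f_{\max} < \infty
\]
with $f_{\min} \leq f_j(t) \leq f_{\max}$ for every $j \in \N$ and every $t \in I$; I would fix such a pair of constants at the outset.

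Next I would apply Proposition~\ref{prop:biLip} to each $f_j$ with these common bounds. The proposition is stated for a closed interval, but this causes no difficulty when $I$ is an arbitrary interval: given $p,q \in M$ and $\varepsilon>0$, any piecewise causal curve realizing $\hat d_j(p,q)$ up to $\varepsilon$ has compact image, so it is contained in $I' \times \Sigma$ for some closed subinterval $I' \subseteq I$, and the causal structures of $\g_j$ and $\etab_\bsigma$ restricted to $I'\times\Sigma$ agree with their restrictions from $I \times \Sigma$; hence the estimate \eqref{dfestimate} applies verbatim. This yields, for all $j \in \N$ and all $p,q \in M$,
\[
 \min\{1,f_{\min}\}\,\hat d_\bsigma(p,q) \;\leq\; \hat d_j(p,q) \;\leq\; \max\{1,f_{\max}\}\,\hat d_\bsigma(p,q).
\]

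Finally I would set
\[
 \lambda := \max\Big\{ f_{\max},\ \tfrac{1}{f_{\min}},\ 2 \Big\},
\]
a constant independent of $j$ with $\lambda>1$, $\lambda \geq \max\{1,f_{\max}\}$ and $\tfrac{1}{\lambda} \leq \min\{1,f_{\min}\}$. Substituting into the previous display, and noting that \eqref{lestimate} holds trivially when $p=q$ (both sides vanish) while $\hat d_\bsigma(p,q)>0$ for $p\neq q$ since $\etab_\bsigma$ induces a definite null distance by \eqref{d:genMink}, gives
\[
 \frac{1}{\lambda} \;\leq\; \frac{\hat d_j(p,q)}{\hat d_\bsigma(p,q)} \;\leq\; \lambda
\]
for all $j \in \N$ and all $p,q \in M$. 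I do not anticipate a genuine obstacle here: the entire substance is already contained in Proposition~\ref{prop:biLip}, and the present statement merely repackages it with a constant that is uniform in $j$ precisely because the bounds on the warping functions are; the only point requiring a word of care is the passage from a closed interval to a general interval $I$.
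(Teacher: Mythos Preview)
Your argument is correct and follows essentially the same route as the paper: choose uniform bounds on the $f_j$ and invoke Proposition~\ref{prop:biLip}. The paper streamlines the bookkeeping by picking $\lambda>1$ directly so that $\tfrac{1}{\lambda}\le f_j\le\lambda$, which makes $\min\{1,f_{\min}\}=\tfrac{1}{\lambda}$ and $\max\{1,f_{\max}\}=\lambda$ automatic; your extra remarks on the sign of $f_j$, the passage to a general interval, and the case $p=q$ are all sound but not needed for the core argument.
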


\begin{proof}
 By assumption of uniform boundedness of $(f_j)_j$ there exists $\lambda>1$ such that
 \[
  \frac{1}{\lambda} \leq f_j(t) \leq \lambda, \qquad t \in I, \, j \in \N.
 \]
 Proposition~\ref{prop:biLip} then implies the bi-Lipschitz bounds
  \[
   \frac{1}{\lambda} \hat d_\bsigma(p,q) \leq \hat d_j(p,q) \leq \lambda \hat d_\bsigma(p,q), \qquad p,q \in M, \, j\in\N,
   \]
  which implies \eqref{lestimate} for $p\neq q$.
\end{proof}

\begin{rmrk}
 Instead of $\hat d_\bsigma$ we could have used any other reference null distance of which we know it yields a (local) integral current space, for instance, the null distance corresponding to Minkowski space or any of the $\g_j$ themselves. 
\end{rmrk}

 Using the bi-Lipschitz bounds \eqref{lestimate}, a theorem of Huang, Lee and Sormani \cite{HLS}*{Theorem A.1} implies convergence of a subsequence. Even the rate of convergence can be estimated (see the restated Theorem~\ref{HLS-thm} and Remark~\ref{rmrk-Boundary Allowed in HLS} in the Background Section~\ref{sect-Background} of this paper for more details).
 
\begin{cor}\label{cor:conv}
 Let $I$ be a closed interval, $(\Sigma,\bsigma)$ a connected compact Riemannian manifold and $(f_j)_j$ be given as in Proposition~\ref{prop:lambda}. Then there exists a subsequence $(\hat d_{j_k})_k$ of $(\hat d_j)_j$ and a length metric $d_\infty$ satisfying \eqref{lestimate} such that this subsequence converges uniformly to $d_\infty$, i.e.,
 \[
  \sup_{p,q \in M} | \hat d_{j_k}(p,q) - d_\infty(p,q)| \to 0, \qquad \text{as}~ k\to\infty.
 \]
 Moreover, the metric spaces $(M,\hat d_{j_k})_k$ converge to $(M,d_\infty)$ with respect to the  Gromov--Hausdorff distance, i.e.,
 \[
  d_{\mathrm{GH}} ((M,\hat d_{j_k}),(M,d_\infty)) \to 0, \qquad \text{as}~ k\to\infty,
 \]
 and the integral current spaces $(M,\hat d_{j_k},T)$ (with respect to the integral current structure of $\hat d_\bsigma$ obtained in Theorem~\ref{thm: Warp are integral currents}) converge to the integral current space $(M,d_\infty,T)$ with respect to the Sormani--Wenger intrinsic flat distance, i.e.,
  \[
  \pushQED{\qed} 
   d_\Fm ((M,\hat d_{j_k},T),(M,d_\infty,T)) \to 0, \qquad \text{as}~ k\to\infty. \qedhere
   \popQED
  \]
\end{cor}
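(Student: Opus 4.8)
The plan is to apply the Huang--Lee--Sormani compactness result (Theorem~\ref{HLS-thm}) directly, with the reference integral current space being the Lorentzian product $\M_\bsigma = I \times \Sigma$ equipped with its canonical null distance. First I would fix $d_0 := \hat d_\bsigma$ and check the hypotheses of Theorem~\ref{HLS-thm}. Since $I$ is a closed (hence compact) interval and $(\Sigma,\bsigma)$ is compact, the metric space $(M,\hat d_\bsigma)$ is compact, in particular precompact; by Lemma~\ref{lem:nulld1} (and the anti-Lipschitz remark following it) $\hat d_\bsigma$ is a genuine metric inducing the manifold topology. By Lemma~\ref{lem:intcurrent} (equivalently Theorem~\ref{thm: Warp are integral currents} with $f\equiv 1$), the space $(M,\hat d_\bsigma)$ carries a natural integral current structure $T = \id_\# \overline T$, the pushforward of the canonical Riemannian integral current of $(I\times\Sigma, \bar\etab_\bsigma)$, so that $(M,\hat d_\bsigma, T)$ is an integral current space. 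Because $I$ is a closed interval, $M$ has nonempty boundary $\partial I \times \Sigma$ and thus $\partial T \neq 0$; by Remark~\ref{rmrk-Boundary Allowed in HLS} this is harmless, as the proof in \cite{HLS}*{Appendix} extends verbatim to the boundary case once the bi-Lipschitz bounds are global.

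Next, Proposition~\ref{prop:lambda} furnishes a constant $\lambda>1$, independent of $j$, with
\[
 \frac{1}{\lambda} \leq \frac{\hat d_j(p,q)}{\hat d_\bsigma(p,q)} \leq \lambda, \qquad p \neq q \in M,\ j\in\N,
\]
which is precisely the hypothesis \eqref{d_j} of Theorem~\ref{HLS-thm} for the metrics $d_j := \hat d_j$ on the fixed set $M$. Applying Theorem~\ref{HLS-thm} (with the boundary modification of Remark~\ref{rmrk-Boundary Allowed in HLS}) then produces a subsequence $(\hat d_{j_k})_k$ and a length metric $d_\infty$, still subject to the two-sided bound and hence to \eqref{lestimate}, such that $\hat d_{j_k}\to d_\infty$ uniformly on $M\times M$, $(M,\hat d_{j_k})\to (M,d_\infty)$ in the Gromov--Hausdorff sense, and --- since the bi-Lipschitz bounds persist in the limit so that the same current structure $T$ serves on every $(M,\hat d_{j_k})$ and on $(M,d_\infty)$ --- $(M,\hat d_{j_k},T)\to (M,d_\infty,T)$ in the Sormani--Wenger intrinsic flat sense. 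The explicit convergence rates provided by Theorem~\ref{HLS-thm} carry over as well, controlled by $\varepsilon_k = \sup_{p,q\in M}|\hat d_{j_k}(p,q)-d_\infty(p,q)|$ and the mass $\mass_{(M,\hat d_\bsigma)}(T)$.

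Since each step is merely a verification of hypotheses already established earlier in the paper, there is no genuine obstacle; the only points demanding care are (i) confirming that the boundary version of the Huang--Lee--Sormani theorem applies, which is exactly Remark~\ref{rmrk-Boundary Allowed in HLS}, and (ii) confirming that the reference space $(M,\hat d_\bsigma,T)$ is a bona fide precompact integral current space whose distance induces the manifold topology, which is Lemma~\ref{lem:intcurrent} together with the compactness of $I\times\Sigma$.
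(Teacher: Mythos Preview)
Your proposal is correct and follows exactly the paper's approach: the corollary is obtained by feeding the uniform bi-Lipschitz bounds of Proposition~\ref{prop:lambda} into Theorem~\ref{HLS-thm} (with Remark~\ref{rmrk-Boundary Allowed in HLS} handling the boundary), using $(M,\hat d_\bsigma,T)$ from Lemma~\ref{lem:intcurrent}/Theorem~\ref{thm: Warp are integral currents} as the precompact reference integral current space. The paper's own proof is just a one-sentence pointer to this argument, so you have simply spelled out the verification of hypotheses in more detail.
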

 
 At this point we know nothing about the limiting distance function $d_\infty$, not even whether it arises as a null distance related to some kind of causal structure on $M$. We thus assume, in addition, uniform convergence and continuity of $(f_j)_j$ to be able to employ Proposition~\ref{prop:pointwise convergence}.
 
\begin{customthm}{\ref{thm:fuconv}}
 Let $I$ be a closed interval and $(\Sigma,\bsigma)$ be a connected compact Riemannian manifold. Suppose $(f_j)_j$ is a sequence of continuous functions $f_j \colon I \to (0,\infty)$ (uniformly bounded away from $0$) and $M_j = I \times_{f_j} \Sigma$ are warped products with Lorentzian metric tensors
 \[
  \g_j = -dt^2 + f_j(t)^2 \bsigma.
 \]
 Assume that $(f_j)_j$ converges uniformly to a limit function
 \[ f_\infty(t) = \lim_{j\to\infty} f_j(t). \]
 Then the corresponding null distances $\hat d_j$ of $\g_j$, $j \in \N$, with respect to the canonical time function $\tau(t,x)=t$, converge uniformly to $\hat d_\infty$ on $M$.
 Moreover, $(M,\hat d_j)$ converge to $(M,\hat d_\infty)$ in the Gromov--Hausdorff and Sormani--Wenger intrinsic flat topology, with $T$  the canonical integral current obtained in Theorem~\ref{thm: Warp are integral currents}.
\end{customthm}

\begin{proof}
 We show that $\hat d_\infty$ is the metric $d_\infty$ obtained in Corollary~\ref{cor:conv}. Note that uniform boundedness is implied by the assumption of uniform convergence $f_j \to f_\infty$, and that continuity and boundedness of $f_\infty>0$ follows as well.

 More precisely, Corollary~\ref{cor:conv} implies that a subsequence $(\hat d_{j_k})_k$ converges to some metric $d_\infty$ uniformly. Since we have shown pointwise convergence $\hat d_j \to \hat d_\infty$ in Proposition~\ref{prop:pointwise convergence}, we know that $d_\infty = \hat d_\infty$ is the uniform limit of $(\hat d_{j_k})_k$. Assume, for contradiction, that the full sequence does \emph{not} converge uniformly. Hence there must exist a subsequence $(\hat d_{j_l})_l$ so that
 \begin{align}
    d_\mathrm{unif}(\hat{d}_{j_l},\hat d_\infty) \ge C > 0 \label{ContradictionHyp}
 \end{align}
 for all $l \in \N$ (we omit $M$ since the space does not change).
 This subsequence still satisfies all the properties we have shown for the original pointwise convergent sequence and hence by the argument above we know that we can take a further subsequence so that
 \begin{align*}
    d_\mathrm{unif}(\hat{d}_{j_{l_n}},\hat d_\infty)  \rightarrow 0,
 \end{align*}
 which contradicts \eqref{ContradictionHyp} for infinitely many $l$. Hence we find the desired uniform convergence of $\hat{d}_j$ to $\hat d_\infty$. The GH and SWIF convergence follows from Theorem~\ref{HLS-thm} since uniform convergence implies GH and SWIF convergence.
\end{proof}
 
\begin{rmrk}
 Note that the proof of Proposition~\ref{prop:pointwise convergence} and Theorem~\ref{thm:fuconv} crucially relies on the continuity of the warping functions $f_j$ and that of the limit warping function $f_\infty$. In Section~\ref{subsec:examples} we will see that uniform convergence and continuity are not always necessary to obtain uniform, GH or SWIF convergence of $\hat d_j$ to $\hat d_\infty$, but none of those assumptions can be dropped for a general result of the type of Theorem~\ref{thm:fuconv}.
\end{rmrk}

\begin{rmrk}
 Note that in Theorem~\ref{thm:fuconv} we assume $\Sigma$ is compact (possibly with boundary). In the case of a noncompact sequence this result can be used to show convergence on compact sets in the standard way.
\end{rmrk} 

\subsection{Examples with non-uniformly converging warping functions}\label{subsec:examples}
 
 In this subsection we explore a few examples of sequences of warped product spacetimes in order to exhibit the variety of behavior we can observe in the limit under uniform, GH, and SWIF convergence. In particular, the sequences of warping functions $f_j$ we are investigating do not satisfy all assumptions in Theorem~\ref{thm:fuconv} and only converge pointwise and in the $L^p$ sense to semicontinuous limit functions $f_\infty$. In the first example we construct a sequence of warped product spacetimes with lower semicontinuous limit warping function which does not converge to the null distance of the limiting warped product spacetime. On the contrary, in the second example the (pointwise) limit warping function is upper semicontinuous and we do still obtain a uniform (and GH and SWIF) convergence result of the corresponding metric spaces. Finally, in the third example a sequence of warping functions converges to a degenerate limit warping function and while we show that the GH and SWIF limits still exist, they disagree.

\begin{ex}[Pointwise convergence of $(f_j)_j$ is not enough]\label{Ex: Necessity of Lower Bound}
 Fix a constant $h_0 \in (0,1)$ and a smooth increasing function $h \colon [0,1] \to (0,1]$ satisfying $h(t)=h_0$ for $t \in [0,\frac{1}{2}]$ and $h(1)=1$ with $h'(1)=0$. Consider the sequence of smooth warping functions $f_j:[0,2]\to [h_0,1]$ (see also Figure~\ref{fig:NecessityofLowerBound}), given by
 \[
 f_j(t)=
 \begin{cases}
 h(jt) & t\in [0, \frac{1}{j}],
 \\ 1 &t\in (\frac{1}{j}, 2].
 \end{cases}
\]

 For any fixed connected compact Riemannian manifold $(\Sigma,\bsigma)$, the sequence $(f_j)_j$ defines a sequence of smooth warped product Lorentzian metrics $\g_j$  with induced null distances $\hat{d}_j$ on the manifold\ $M = [0,2] \times \Sigma$.

 The pointwise limit of $(f_j)_j$ is the bounded function (see Figure~\ref{fig:NecessityofLowerBound})
 \[
  f_\infty (t) = \begin{cases}
                             h_0 & t=0, \\
                             1 & t\in (0,2],
                            \end{cases}
 \]
 with the corresponding null distance $\hat d_\infty = \hat d_\bsigma$.

 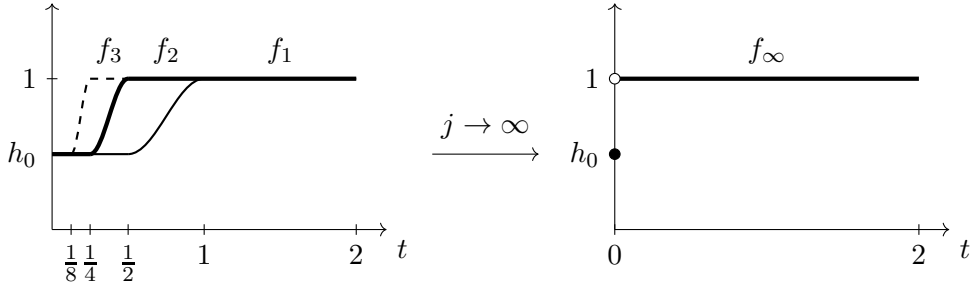
\begin{figure}[h]
  \centering
 \begin{tikzpicture}[scale=2]
  \draw[->] (0,0) -- (2.2,0) node[anchor=north west] {$t$};
  \draw[->] (0,0) -- (0,1.5) node[anchor=south east] {};
  
  \foreach \x/\xtext in {0.125/\frac{1}{8}, 0.25/\frac{1}{4}, 0.5/\frac{1}{2}, 1/1, 2/2, 3.7/0, 5.7/2}
    \draw[shift={(\x,0)}] (0pt,1pt) -- (0pt,-1pt) node[below] {$\xtext$};

  \draw[thick] (1,1) -- (2,1);
  \node[above, outer sep=2pt] at (1.5,1) {$f_1$};
  \draw[thick] (0,1/2) -- (1/2,1/2);
  \node[left, outer sep=2pt] at (0,.5) {$h_0$};
  \draw[thick] (1/2,1/2) cos (3/4,3/4) sin (1,1);
  
  \draw[ultra thick] (0,1/2) -- (1/4,1/2);
  \draw[ultra thick] (1/2,1) -- (2,1);
  \draw[ultra thick] (1/4,1/2) cos (3/8,3/4) sin (1/2,1);
  \node[above, outer sep=2pt] at (.75,1) {$f_2$};
  
  \draw[thick, dashed] (0,1/2) -- (1/8,1/2);
  \draw[thick, dashed] (1/4,1) -- (2,1);
  \draw[thick, dashed] (1/8,1/2) cos (3/16,3/4) sin (1/4,1);
  \node[above, outer sep=2pt] at (3/8,1) {$f_3$};
  
  \node[left, outer sep=2pt] at (0,1) {$1$};
  \draw[shift={(0,1)}] (-1pt,0pt) -- (1pt,0pt);
  
  \draw[->] (2.5,.5) -- (3.2,.5) node[anchor=north west] {};
  \node[above, outer sep=2pt] at (2.85,0.5) {$j\to\infty$}; 
  
  \draw[->] (3.7,0) ++(0,0) -- +(2.2,0) node[anchor=north west] {$t$};
  \draw[->] (3.7,0) ++(0,0)-- (3.7,1.5) node[anchor=south east] {};
  \draw[fill] (3.7,0) ++(0,.5) circle [radius=0.035];
  \node[left, outer sep=2pt] at (3.7,.5) {$h_0$};
  \node[left, outer sep=2pt] at (3.7,1) {$1$};
  \draw[ultra thick] (3.7,0) ++(0,1) -- +(2,0);
  \draw[draw=black, fill=white] (3.7,0) ++(0,1) circle [radius=0.035];
  \node[above, outer sep=2pt] at (4.7,1) {$f_\infty$};
 \end{tikzpicture}
 \caption{Sequence of pointwise converging warping functions $f_j$ in Example~\ref{Ex: Necessity of Lower Bound}.}
 \label{fig:NecessityofLowerBound}
 \end{figure} 

 A uniform limit (even of a subsequence) of $(f_j)_j$ clearly does not exist, and hence Corollary~\ref{cor:conv} and Theorem~\ref{thm:fuconv} are not applicable. We will show that $d_\infty=\hat d_\bsigma$ is \emph{not} a limit of any subsequence of null distances $(\hat d_j)_j$.

 In fact, we will show that $(\hat d_j)_j$ converges to a another metric space with metric $d_0$. 

 \textbf{Claim.} The sequence of null distances $\hat d_j$induced by the warping functions $f_j$ converges uniformly to the metric $d_0 \neq \hat d_\infty = \hat d_\bsigma$, for points $p=(t(p),p_\Sigma),q=(t(q),q_\Sigma) \in [0,2] \times \Sigma$ given by
 \begin{align*}
  d_0(p,q) := \min\left\{\hat d_\bsigma(p,q), t(p)+t(q) + h_0 \, \hat d_\bsigma(J_{\etab_\bsigma}^-(p)\cap  \Sigma_0,J_{\etab_\bsigma}^-(q)\cap  \Sigma_0)\right\}
 \end{align*}
 where $\Sigma_{t} = \{t\}\times \Sigma$.
 In addition $(M,\hat d_j)$ converges in the GH and SWIF sense to $(M,d_0)$.
\end{ex}

\begin{proof}
 First we notice that since $f_j$ is uniformly bounded from above and below we have Lipschitz bounds on $\hat d_j$ by Proposition \ref{prop:biLip}, and hence by Theorem \ref{HLS-thm} we know that a subsequence must converge in the uniform, GH and SWIF sense to a metric $d_\infty$ satisfying the same bi-Lipschitz bounds (see Corollary~\ref{cor:conv}). In what follows we show that $d_\infty=d_0$ is the pointwise limit of $(\hat d_j)_j$, and subsequently prove uniform convergence. Notice that it is clear that $d_0 \neq \hat d_\infty$ since they only agree when $t(p)$ and $t(q)$ are positive.
 
 Let $p=(t(p),p_\Sigma)$ and $q=(t(q),q_\Sigma)$ for $p_\Sigma,q_\Sigma \in \Sigma$. First notice that since $h_0 \le f_j \le 1$ we have by Proposition~\ref{prop:biLip} that
 \begin{align}\label{biLip1}
  h_0 \hat d_{\bsigma}(p,q) \le \hat d_j(p,q) \le \hat d_{\bsigma}(p,q).
 \end{align}

 \textit{Case 1: $t(p)=t(q)=0$.} In this case, $d_0(p,q) = h_0 \, d_\bsigma(p_\Sigma,q_\Sigma)$ and we show that $\hat d_j \to d_0$ pointwise. We can construct a piecewise null curve $\beta_j$ with respect to $\g_j$ which remains within the strip $S_j := [0,\frac{1}{2j}] \times \Sigma$ in order to calculate
 \begin{align*}
  \hat d_j(p,q) \le \hat L_j(\beta_j) = h_0 d_{\bsigma}(p_\Sigma,q_\Sigma), 
 \end{align*}
 so together with \eqref{biLip1} we immediately have that \[ \lim_{j\to\infty}\hat d_j(p,q) = h_0 d_{\bsigma}(p_\Sigma,q_\Sigma) = d_0(p,q)\] is satisfied.

 \textit{Case 2: $t(p)>0$ and $t(q)=0$.} In this case,
  \[ d_0(p,q) = t(p) + h_0 \, \hat d_\bsigma(J_{\etab_\bsigma}^-(p)\cap  \Sigma_0,q). \]
 The proof of pointwise convergence is obtained by curves minimizing their contributions in both the $\Sigma_0$ and the $(0,2] \times \Sigma$ region, i.e.,
 \begin{align}\label{d0min}
  d_0(p,q) = \inf_{q' \in \{0\} \times \Sigma} \left( \hat d_\bsigma(p,q') + h_0 d_\bsigma(q'_\Sigma,q_\Sigma) \right).
 \end{align}
 Note that if $q \in J^-_{\etab_\bsigma}(p)$ then the minimum is obtained by choosing $q'=q$ because the second term vanishes and the first one is minimal due to \eqref{biLip1}. If $q \not \in J^-_{\etab_\bsigma}(p)$ then both terms are minimal for $q'$ at the intersection of the light cone with $\Sigma$. It remains to show that \eqref{d0min} is indeed the limit of $(\hat d_j)_j$.

 We consider two cases: (a) If $q \in J^-_{\etab_\bsigma}(p)$, then for any $j \in \N$ also $q \in J^-_j(p)$ (since $f_j \leq f_{j+1} \leq f_\infty$ and thus $\g_\infty \preccurlyeq \g_{j+1} \preccurlyeq \g_j$ by Example~\ref{ex:order}) and therefore
 \[
  \hat d_j(p,q) = t(p) - t(q) = t(p) = d_0(p,q).
 \]
 In particular, $d_\infty(p,q) := \lim_{j \to \infty} \hat d_j(p,q) = d_0(p,q)$.
 
 (b) Suppose now that $q \not\in J^-_{\etab_\bsigma}(p)$. For any $q' \in (J^-_{\etab_\bsigma}(p) \setminus I^-_{\etab_\bsigma}(p)) \cap (\{0\} \times \Sigma)$ there exists a $\g_\infty$-null curve $\beta^1$ such that
 \[
  \hat L(\beta^1) = t(p) - t(q') = t(p) = \hat d_\bsigma(p,q') = d_0(p,q'),
 \]
 Since $q' \in J^-_{\M_\bsigma}(p) \subseteq J^-_j(p)$, also
 \begin{align}\label{dj}
  \hat d_j(p,q') = t(p).  
 \end{align}
 Moreover, for every such $q'$ there exists a broken causal curve $\beta^2_j$ in $[0,\frac{1}{j}] \times \Sigma$ such that by Lemma~\ref{d:warped}
 \[
   \hat L(\beta^2_j) \leq \hat d_j(q',q) + \frac{1}{j} = h_0 d_\bsigma(q'_\Sigma,q_\Sigma) + \frac{1}{j}.
 \]
 Thus by the triangle inequality
 \begin{align*}
  \hat d_j(p,q) 
   &\leq \inf_{q' \in \Sigma_0} \left( \hat d_j(p,q') + \hat d_j(q',q) \right) \\
   &\leq t(p) + h_0 \inf_{q' \in \Sigma_0} \hat d_\bsigma(q'_\Sigma,q_\Sigma) + \frac{1}{j} \\
   &= d_0(p,q) + \frac{1}{j},
 \end{align*}
 and therefore
 \begin{align}\label{limsup}
  \limsup_{j\to\infty} \hat d_j(p,q) \leq d_0(p,q).
 \end{align}
 
 On the other hand, there is a broken causal curve $\beta_j$ from $p$ to $q$ with respect to $\g_j$ that satisfies
 \[
  \hat L(\beta_j) \leq \hat d_j(p,q) + \frac{1}{j}
 \]
 Due to the continuity of $\beta_j$ and $t$ (and since $t(p)>0=t(q)$), for $j$ sufficiently large, there exists a last point $q_j$ along the image of $\beta_j$ in $M$ satisfying $t(q_j)=\frac{1}{j}$. Then
 \[
  \hat d_j(q_j,q) \geq t(q_j) - t(q) = \frac{1}{j}.
 \]
 We add $q_j$ to the break points $\{x_i\}_{i=0}^N$ of $\beta_j$. Thus
 \begin{align*}
  \hat d_j(p,q) + \frac{1}{j} &\geq \hat L(\beta_j) = \sum_i |t(x_i)-t(x_{i+1})| \\
                         &\geq t(p) - t(q_j) + \hat d_j(q_j,q)  \\
                         &\stackrel{\eqref{biLip1}}{\geq} t(p) - \frac{1}{j} + h_0 \hat d_\bsigma(q_j,q) \\
                         &= t(p) - \frac{1}{j} + h_0 d_\bsigma(q_{j,\Sigma},q_{\Sigma}) \\
                         &\geq d_0(p,q) - \frac{1}{j},
 \end{align*}
 and therefore
 \begin{align}\label{liminf}
  \liminf_{j\to\infty} \hat d_j(p,q) \geq d_0(p,q).
 \end{align}
 Together, \eqref{limsup} and \eqref{liminf} imply the desired result
 \[
  \lim_{j\to\infty} \hat d_j(p,q) = d_0(p,q)
 \]
 for all $p,q$ in Case 2 pointwise.
 
 \textit{Case 3: $t(p), t(q)>0$.} 
We distinguish two cases. If $q \in J^\pm_{\etab_\bsigma}(p) \subseteq J^\pm_j(p)$ (or vice versa), then it follows immediately that
 \[
  \hat d_j(p,q) = |t(p)-t(q)| = d_0(p,q).
 \]
 Next we assume that $q \not\in J^\pm_{\etab_\bsigma}(p)$.
Since $\etab_\bsigma  \preccurlyeq\g_j$ we know by Lemma~\ref{lem: biLip Inequality} that $\hat d_j(p,q) \le \hat d_\bsigma(p,q)$. For any $p' \in J_{\etab_\bsigma}^-(p)\cap  \Sigma_0$ and $q' \in J_{\etab_\bsigma}^-(q)\cap  \Sigma_0$ the triangle inequality implies
 \begin{align*}
     \hat d_j(p,q) &\le \hat d_j(p,p')+\hat d_j(p',q')+\hat d_j(q',q),
 \end{align*}
 and hence by Case 1 and Case 2 we find
 \begin{align*}
     \limsup_{j \rightarrow \infty}\hat{d}_j(p,q) \le t(p) + h_0 \hat{d}_{\bsigma}(p',q') + t(q),
 \end{align*}
 which implies
 \begin{align}\label{d0est0}
      \limsup_{j \rightarrow \infty}\hat{d}_j(p,q) \le d_0(p,q).
 \end{align}
 It remains to show that the value is attained. If $J_{\etab_\bsigma}^-(p) \cap J_{\etab_\bsigma}^-(q) \cap [\frac{1}{j},2] \neq \emptyset$ then for $j$ (and all larger indices)
 \[
  \hat d_j(p,q) = \hat d_\bsigma(p,q),
 \]
 and thus furthermore
 \begin{align}\label{d0est1}
  \lim_{j\to\infty} \hat d_j(p,q) = \hat d_\bsigma(p,q) \geq d_0(p,q),
 \end{align}
and we are done. If, on the other hand, $J_{\etab_\bsigma}^-(p) \cap J_{\etab_\bsigma}^-(q) \cap (0,2] = \emptyset$, then for $j$ sufficiently large such that $t(p),t(q)>\frac{1}{j}$ we consider two points $p_j \in J_{\etab_\bsigma}^-(p)\cap  \Sigma_{\frac{1}{j}}$, $q_j \in J_{\etab_\bsigma}^-(q)\cap  \Sigma_{\frac{1}{j}}$
and the auxiliary function $\tilde f_j \colon [0,2] \to \R$, defined by
 \[
  \tilde f_j(t) = \begin{cases}
                                 h_0 & \text{if } t\leq \frac{1}{j}, \\
                                 1 & \text{if } t < \frac{1}{j}.
                                \end{cases}
 \]
This function $\tilde f_j$ defines a bounded warping function that satisfies $\tilde f_j \leq f_j$, and hence by Example~\ref{ex:order} we know that
 \[
  \hat d_{\tilde j}(p,q) \leq \hat d_j(p,q).
 \]
 Let $\beta_j$ be a curve that connects $p$ to $p_j$ to $q_j$ (the latter within $[0,\frac{1}{j}] \times \Sigma$) to $q$. Then
 \begin{align*}
  \inf_{p_j,q_j} \hat L(\beta_j) &= t(p) - \frac{1}{j} + h_0 \inf_{p_j,q_j} \hat d_\bsigma (p_j,q_j) + t(q) - \frac{1}{j} \\
  &= t(p) + t(q) - \frac{2}{j} + h_0 \hat d_\bsigma (J_{\etab_\bsigma}^-(p)\cap  \Sigma_{\frac{1}{j}},J_{\etab_\bsigma}^-(q)\cap  \Sigma_{\frac{1}{j}})
 \end{align*}
 and 
 \begin{align*}
 \hat{d}_{\tilde{j}} = \min\{ \hat{d}_{\bsigma}(p,q), \inf_{p_j,q_j} \hat L(\beta_j)\}
 \end{align*}
 thus
 \begin{align}\label{d0est2}
  &\liminf_{j\to\infty} \hat d_j(p,q) 
  \\&\quad \geq \min\{ \hat{d}_{\bsigma}(p,q),t(p) + t(q) + h_0 \hat d_\bsigma (J_{\etab_\bsigma}^-(p)\cap  \Sigma_{0},J_{\etab_\bsigma}^-(q)\cap  \Sigma_{0})\} \nonumber 
  \\ &\quad = d_0(p,q).\nonumber 
 \end{align}
Together, \eqref{d0est0}--\eqref{d0est2} establish that also if $q \not\in J^\pm_{\etab_\bsigma}(p)$ (or vice versa) we have
  \begin{align*}
      \lim_{j \rightarrow \infty}\hat{d}_j(p,q) =d_0(p,q).
 \end{align*}
 
 Since a subsequence of $\hat{d}_j$ must converge in the uniform, GH, and SWIF sense and we have now shown pointwise convergence of $\hat{d}_j(p,q) \rightarrow d_0(p,q)$ in all cases, we can conclude that $(\hat d_j)_j$ itself must converge in the uniform, GH, and SWIF sense to $d_0$. For the sake of contradiction assume that $\hat{d}_j$ does not converge in the uniform, GH, or SWIF sense to $d_0$ and hence there must exist a subsequence $(\hat d_{j_l})_l$ that is bounded away from $(M,d_0)$ in the uniform, GH and SWIF sense. However, the subsequence $(\hat d_{j_l})_l$ still satisfies all the properties shown for the original Example~\ref{Ex: Necessity of Lower Bound}, and thus we can take a further subsequence $(\hat d_{j_{l_n}})_n$ of $(\hat d_{j_l})_l$ must converge in the uniform, GH and SWIF topology---and yet again to the only possible limit $d_0$, a contradiction.
\end{proof}

\begin{rmrk}
 The above Example~\ref{Ex: Necessity of Lower Bound} shows that pointwise convergence of $(f_j)_j$ is generally not enough to obtain the desired convergence of the corresponding null distances $(\hat d_j)_j$ to the null distance of the limiting function $f_\infty$. This illustrates our assumption of uniform convergence in Theorem \ref{thm:fuconv}. The setting can easily be extended to an interval $[-2,2]$ by extending $h$ as an even function on $[-2,0]$. Similarly, it can be extended to $\R \times \Sigma$ by extending it with constant value $1$.

 Besides, note that $L^p$ convergence, for $p\geq 1$ is also not enough, since the $L^p$ limit of $(f_j)_j$ is
 \[ f^p_\infty(t) = 1 \qquad \text{a.e.} \]
 with the same corresponding null distance $\hat d_\infty^p = \hat d_\bsigma$ as the pointwise limit.
\end{rmrk}

 While in Example~\ref{Ex: Necessity of Lower Bound} we have seen that the convergence does not hold if the null cone of $\hat d_\infty$ is narrower than those of the sequence $(\hat{d}_j)_j$, this is not the case when the null cone of $\hat{d}_{\infty}$ is wider than the null cone of the sequence $(\hat{d}_j)_j$. Indeed, we will be able to deduce that the sequence does converge to the null distance of a warped product spacetime in such a setting. As such, uniform convergence of $(f_j)_j$ is not necessary to obtain uniform convergence of the corresponding null distances.

\begin{ex}[Uniform convergence of $(f_j)_j$ is not necessary]\label{Ex: Upper Bound Unecessary}
 Let $h_0 >1$ and consider a smooth function $h \colon [0,1] \to [1,h_0]$ defined as in Example~\ref{Ex: Necessity of Lower Bound} except that it is now decreasing instead of increasing between $h(t)=h_0$ for $t\in[0,\frac{1}{2}]$ and $h(1)=1$, with $h'(1)=0$. Consider the sequence of smooth functions $f_j(t)\colon [0,2]\to [1,h_0]$ (see also Figure~\ref{fig:UpperBoundUnnecessary}), given by
 \begin{align*}
 f_j(t)=
 \begin{cases}
 h(jt) & t\in[0, \frac{1}{j}],
 \\ 1 &t\in (\frac{1}{j}, 2].
 \end{cases}
\end{align*}

 For any fixed connected compact Riemannian manifold $(\Sigma, \bsigma)$, the sequence $(f_j)_j$ defines a sequence of smooth Lorentzian metrics $\g_j$ with induced null distances $\hat{d}_j$ on the manifolds $M= [0,2]\times \Sigma$.
 
 The pointwise limit of $(f_j)_j$ is the bounded function (see Figure~\ref{fig:UpperBoundUnnecessary})
 \[
  f_\infty(t) = \begin{cases}
                 h_0 & t = 0, \\
                 1 & t \in (0,2],
                \end{cases}
 \]
 with induced null Lorentzian metric $\g_\infty$ and corresponding null distance $\hat d_\infty = \hat d_\bsigma$.

 \begin{figure} [h]
  \centering
  \begin{tikzpicture}[scale=2]
  \draw[->] (0,0) -- (2.2,0) node[anchor=north west] {$t$};
  \draw[->] (0,0) -- (0,2) node[anchor=south east] {};
  
  \foreach \x/\xtext in {0.125/\frac{1}{8}, 0.25/\frac{1}{4}, 0.5/\frac{1}{2}, 1/1, 2/2, 3.7/0, 5.7/2}
    \draw[shift={(\x,0)}] (0pt,1pt) -- (0pt,-1pt) node[below] {$\xtext$};

  \draw[thick] (1,1) -- (2,1);
  \node[above, outer sep=2pt] at (1.5,1) {$f_1$};
  \draw[thick] (0,1+1/2) -- (1/2,1+1/2);
  \node[left, outer sep=2pt] at (0,1.5) {$h_0$};
    \draw[shift={(0,1)}] (-1pt,0pt) -- (1pt,0pt);
    \node[left, outer sep=2pt] at (0,1) {$1$};
  \draw[thick] (1/2,1+1/2) cos (3/4,1+1/4) sin (1,1);
  
  \draw[ultra thick] (0,1+1/2) -- (1/4,1+1/2);
  \draw[ultra thick] (1/2,1) -- (2,1);
  \draw[ultra thick] (1/4,1+1/2) cos (3/8,1+1/4) sin (1/2,1);
  \node[below, outer sep=2pt] at (.75,1) {$f_2$};
  
  \draw[thick, dashed] (0,1+1/2) -- (1/8,1+1/2);
  \draw[thick, dashed] (1/4,1) -- (2,1);
  \draw[thick, dashed] (1/8,1+1/2) cos (3/16,1+1/4) sin (1/4,1);
  \node[below, outer sep=2pt] at (3/8,1) {$f_3$};
  
  \draw[->] (2.5,.5) -- (3.2,.5) node[anchor=north west] {};
  \node[above, outer sep=2pt] at (2.85,0.5) {$j\to\infty$}; 
  
  \draw[->] (3.7,0) ++(0,0) -- +(2.2,0) node[anchor=north west] {$t$};
  \draw[->] (3.7,0) ++(0,0)-- (3.7,2) node[anchor=south east] {};
  \draw[fill] (3.7,0) ++(0,1.5) circle [radius=0.035];
  \node[left, outer sep=2pt] at (3.7,1.5) {$h_0$};
  \node[left, outer sep=2pt] at (3.7,1) {$1$};
  \draw[ultra thick] (3.7,0) ++(0,1) -- +(2,0);
  \draw[draw=black, fill=white] (3.7,0) ++(0,1) circle [radius=0.035];
  \node[above, outer sep=2pt] at (4.7,1) {$f_\infty$};
  \end{tikzpicture}
  \caption{Sequence of pointwise converging warping functions $f_j$ of Example~\ref{Ex: Upper Bound Unecessary}.}
  \label{fig:UpperBoundUnnecessary}
 \end{figure}
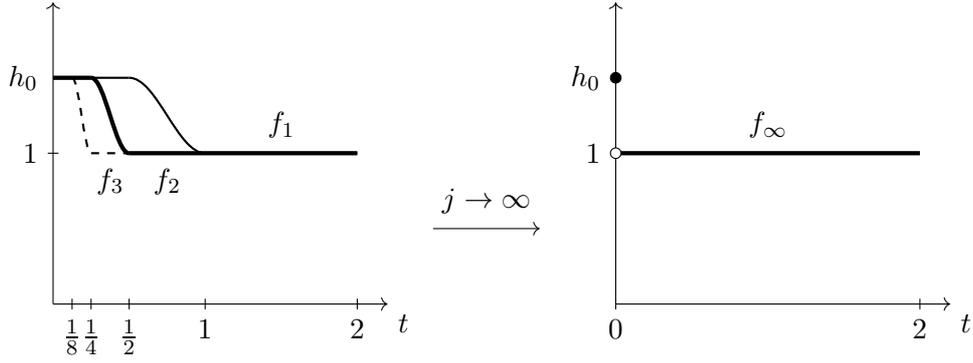 

 \textbf{Claim.} The null distances $\hat{d}_j$ converge uniformly to $\hat{d}_{\infty}$, and $(M,\hat d_j)$ converges in the GH and SWIF sense to $(M, \hat d_{\infty})$.
\end{ex}

\begin{proof}
 First we notice that since $f_j$ is uniformly bounded from above and below we have Lipschitz bounds on $\hat{d}_j$ by Proposition \ref{prop:biLip} and hence by Theorem \ref{HLS-thm} we know that a subsequence must converge in the uniform, GH and SWIF sense to a metric $d$ which satisfies the same bi-Lipschitz bounds. Our goal now is to show that the full sequence $(\hat d_j)_j$ converges pointwise to $d = \hat{d}_{\infty}$, which implies uniform convergence using the same the proof as in Example~\ref{Ex: Necessity of Lower Bound}.

 Let $p=(t(p),p_\Sigma)$ and $q=(t(q),q_\Sigma)$ in $M$, where $p_\Sigma,q_\Sigma$ is the projection onto $\Sigma$. First notice that since $1 \le f_j \le h_0$ we have by Proposition~\ref{prop:biLip} that
 \begin{align}\label{biLip2}
  \hat d_\infty(p,q) = \hat{d}_{\bsigma}(p,q) \le \hat d_j(p,q) \le h_0 \hat d_{\bsigma}(p,q).
 \end{align}
 and thus
 \begin{align*}
  \hat d_\infty(p,q) \leq \liminf_{j \rightarrow \infty} \hat d_j(p,q).
 \end{align*}
 It remains to be shown that
 \begin{align}\label{supbound}
  \limsup_{j \rightarrow \infty} \hat d_j(p,q) \le \hat{d}_{\infty}(p,q).
 \end{align}

\textit{Case 1: $t(p), t(q) >0$.}
 For $j$ sufficiently large, $t(p)$ and $t(q)$ are larger than $\frac{1}{j}$. For any such $j$ there exists a broken causal curve $\beta_j$ such that
 \[
  \hat L_\infty(\beta_j) \leq \hat d_\infty(p,q) + \frac{1}{j}.
 \]
 Without loss of generality we can assume that $\operatorname{im} (\beta_j) \subseteq [\frac{1}{j},2] \times \Sigma$, which implies that $\hat L_j(\beta_j) = \hat L_\infty(\beta_j)$ and therefore, for all $j$ sufficiently large,
\begin{align*}
 \hat d_j(p,q) \leq \hat{L}_{\infty}(\beta_j) \leq \frac{1}{j} + \hat{d}_{\infty}(p,q),
\end{align*}
 which implies \eqref{supbound}.
 
\textit{Case 2: $t(p)=t(q)=0$.}
 Let $p_j = (\frac{1}{j},p_\Sigma)$ and $q_j=(\frac{1}{j},q_\Sigma)$. By the triangle inequality, and the fact that $p_j \in J^+(p)$, $q_j \in J^+(q)$ (with respect all $\g_i$ and also $\g_\infty$)
 \begin{align}
  \hat d_j(p,q) &\leq \hat d_j(p,p_j) + \hat d_j(p_j,q_j) + \hat d_j(q_j,q) \nonumber \\
               &= \frac{2}{j} + \hat d_j(p_j,q_j). \label{case22}
 \end{align}
 Moreover, we can connect $p_j$ and $q_j$ by broken $\g_\infty$-causal curves $\beta_j$ in $[\frac{1}{j},2] \times \Sigma$ (which have the same null length as those in $[0,2]\times \Sigma$), hence by Lemma~\ref{lem:nulld1}
 \[
  \hat d_j(p_j,q_j) \leq \hat d_\bsigma(p_j,q_j) = d_\bsigma (p_\Sigma,q_\Sigma) = d_\infty(p,q).
 \]
 Thus together with \eqref{case22} we have
 \[
  \hat d_j(p,q) \leq \frac{2}{j} + d_\infty(p,q),
 \]
 which implies \eqref{supbound}. 

 \textit{Case 3: $t(p)>0, \, t(q)=0$.} For any $j$ sufficiently large such that $t(p) > \frac{1}{j}$ we define $q_j=(\frac{1}{j},q_\Sigma)$. Then due to the triangle inequality,
 \begin{align*}
  \hat d_j(p,q) &\leq \hat d_j(p,q_j) + \hat d_j(q,q_j) \\
                &\leq \hat d_\bsigma(p,q_j) + \frac{1}{j},
\end{align*}
 since we can connect $p$ and $q_j$ by broken $\g_\infty$-causal curves in $[\frac{1}{j},2] \times \Sigma$ which have the same null length as those in $[0,2]\times\Sigma$. Applying again the triangle inequality for $\hat d_\bsigma$ implies
\begin{align*}
 \hat d_j(p,q) &\leq \hat d_\bsigma(p,q) + \hat d_\bsigma(q_j,q) + \frac{1}{j} \\
               &= \hat d_\infty(p,q) + \frac{2}{j},
\end{align*}
 and thus the desired estimate \eqref{supbound}.

 As in the previous Example~\ref{Ex: Necessity of Lower Bound} we can show that the pointwise limit $\hat d_\infty$ is the uniform limit of $(\hat d_j)_j$.
\end{proof}

\begin{rmrk}
 As Example~\ref{Ex: Necessity of Lower Bound}, also Example~\ref{Ex: Upper Bound Unecessary} can be extended to $[-2,2]\times \Sigma$ or even $\R \times \Sigma$. Note that considering the $L^p$ limit $f^p_\infty(t) = 1$ a.e.\ also yields uniform convergence of null distances, since $\hat d^p_\infty$ is the same as the corresponding pointwise null distance $\hat d_\infty = \hat d_\bsigma$.
\end{rmrk}

 In Example~\ref{Ex: Necessity of Lower Bound} and Example~\ref{Ex: Upper Bound Unecessary} we have seen that uniform convergence, GH and SWIF hold and that all limiting distance functions are the same (although, as in Example~\ref{Ex: Necessity of Lower Bound}, not necessarily related to the null distance of the limiting warped product). In the last example we construct a sequence of warped product spacetimes for which the GH and SWIF limits disagree. For general metric spaces we know that the GH and SWIF limits can disagree (see examples in \cite{LS,S,SW}), so it is important to have examples where this happens for spacetimes equipped with the null distance. Note that for such a counterexample the conditions of Corollary~\ref{cor:conv} must be violated.

\begin{ex}[GH and SWIF limits need not agree]\label{GHandSWIFdisagree}
 Let $h \colon [0,1]\to[0,1]$ be a smooth increasing function that satisfies $h(0)=0$, $h(1)=1$ and $h'(0)=h'(1)=0$. Consider the sequence $(f_j)_j$ of smooth increasing functions $f_j \colon [0,2] \to [\frac{1}{j},1]$ (see Figure~\ref{fig:GHandSWIFdisagree}), given by
 \begin{align*}
  f_j(t) = \begin{cases}
            \frac{1}{j} & t \in [0,1-\frac{1}{j}], \\
            \frac{1}{j} + \frac{j-1}{j} \, h\left(jt-(j-1)\right) & t \in (1-\frac{1}{j},1), \\
            1 & t \in [1,2].
           \end{cases}
 \end{align*}
 For any fixed connected compact Riemannian manifold $(\Sigma,\bsigma)$, the sequence $(f_j)_j$ defines a sequence of smooth Lorentzian warped product metrics  $\g_j$ with induced null distances $\hat d_j$ on the manifold $M = [0,2] \times \Sigma$.  
 
 Since $f_{j+1} \leq f_{j}$ for all $j$, by Example~\ref{ex:order} we have $\g_{j} \preccurlyeq \g_{j+1}$ and 
 \begin{align}\label{stackd}
  \hat d_{j+1} (p,q) \leq \hat d_{j} (p,q), \qquad p,q \in M.
 \end{align}
 The pointwise limit of $(f_j)_j$ is the bounded discontinuous function (see also Figure~\ref{fig:GHandSWIFdisagree})
 \[
  f_\infty(t) = \begin{cases}
                0 & t \in [0,1), \\
                1 & t \in [1,2],
               \end{cases}
 \]
 which clearly does \emph{not} induce a nondegenerate warped product metric on $M$, and hence no null distance. We investigate whether the sequence $(\hat d_j)_j$ converges to a limiting distance function with respect to different notions of convergence.
 
 \begin{figure}
  \centering
  \begin{tikzpicture}[scale=2]
  \draw[->] (0,0) -- (2.2,0) node[anchor=north west] {$t$};
  \draw[->] (0,0) -- (0,1.5) node[anchor=south east] {};
  
  \foreach \x/\xtext in {0/0, 1/1, 0.5/\frac{1}{2}, 0.666/\frac{2}{3}, 2/2, 4.7/1, 5.7/2}
    \draw[shift={(\x,0)}] (0pt,1pt) -- (0pt,-1pt) node[below] {$\xtext$};
  \foreach \t/\ttext in {1/1, 0.5/\frac{1}{2}, 0.333/\frac{1}{3}~}
 \draw[shift={(0,\t)}] node[left] {$\ttext$};
    
  \draw[thick] (0,1) -- (2,1);
  \node[above, outer sep=2pt] at (1.5,1) {$f_1$};
  
  \draw[ultra thick] (0,1/2) -- (1/2,1/2);
  \draw[ultra thick] (1,1) -- (2,1);
  \draw[ultra thick] (1/2,1/2) cos (3/4,3/4) sin (1,1);
  \node[above, outer sep=2pt] at (0.5,0.5) {$f_2$};
  
  \draw[thick, dashed] (0,1/3) -- (2/3,1/3);
  \draw[thick, dashed] (1,1) -- (2,1);
  \draw[thick, dashed] (2/3,1/3) cos (5/6,2/3) sin (1,1);
  \node[right, outer sep=2pt] at (0.75,0.4) {$f_3$};
  
  \draw[->] (2.5,.5) -- (3.2,.5) node[anchor=north west] {};
  \node[above, outer sep=2pt] at (2.85,0.5) {$j\to\infty$}; 
  
  \draw[->] (3.7,0) ++(0,0) -- +(2.2,0) node[anchor=north west] {$t$};
  \draw[->] (3.7,0) ++(0,0)-- +(0,1.5) node[anchor=south east] {};
  \draw[shift={(3.7,1)}] (-1pt,0pt) -- (1pt,0pt) node[left] {$1$};
  \draw[ultra thick] (3.7,0) ++(0,0) -- +(1,0);
  \draw[draw=black, fill=white] (3.7,0) ++(1,0) circle [radius=0.035];
  \draw[ultra thick] (3.7,0) ++(1,1) -- +(1,0);
  \draw[fill] (3.7,0) ++(1,1) circle [radius=0.035];
  \node[above, outer sep=2pt] at (5.2,1) {$f_\infty$};
  \end{tikzpicture}
  \caption{Sequence of warping functions $f_j$ of Example~\ref{GHandSWIFdisagree}.}
  \label{fig:GHandSWIFdisagree}
 \end{figure}
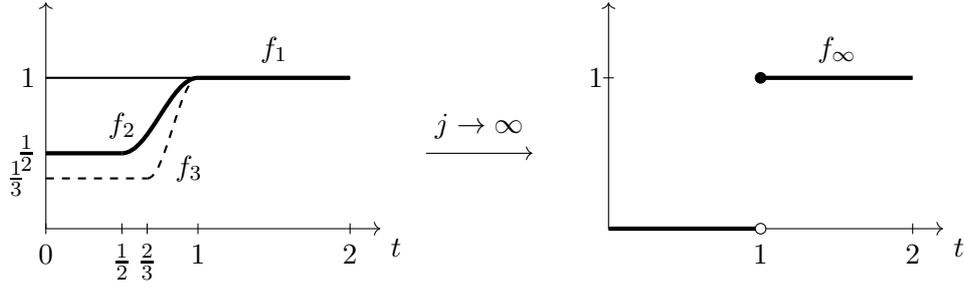
 
 Fix $p_0 \in \Sigma$ and let
 \begin{align*}
  M_\infty &= ([0,1] \times \{p_0\}) \sqcup (1,2] \times \Sigma),
  \\M'&= \{1\}\times \{p_0\} \sqcup ((1,2]\times \Sigma) ,
 \end{align*}
 with distance function
 \[
  d_\infty(p,q) = \begin{cases}
                   \min \{ \hat d_\bsigma (p,q), t(p)-1+t(q)-1 \} & t(p)> 1,t(q)> 1, \\
                   |t(p)-t(q)| & \text{else}.
                  \end{cases}
 \]
 
 \textbf{Claim.} We have that
 \begin{align*}
 (M, \hat d_j) &\GHto (M_{\infty},d_\infty), \\
 (M, \hat d_j) &\Fto (X, d_\infty),
 \end{align*}
 where $X \subseteq M'$, possibly the zero space.
\end{ex}
 
\begin{proof}
 By Example \ref{ex:order} we know that $(\hat d_j)_j$ is a decreasing sequence of null distances on $M = [0,2] \times \Sigma$, which is bounded below by $0$. Thus the Monotone Convergence Theorem and \eqref{stackd} implies that $\hat d_j$ converges pointwise to a function $d_0$ on $M \times M$ such that for any $p,q \in M$
 \begin{align}\label{ineq2}
  d_0(p,q) \leq \hat d_j (p,q) \leq \hat d_1 (p,q)=\hat d_\bsigma(p,q).
 \end{align}
 If we can show that $d_0$ is continuous, then Dini's Theorem implies uniform convergence $\hat d_j \to d_0$ on $M \times M$. We distinguish several cases to prove this. Let $p=(t(p),p_\Sigma),q=(t(q),q_\Sigma) \in [0,2] \times \Sigma$.
 
 \textit{Case 1: $t(p), t(q) < 1$.} By choosing $j$ large enough we can ensure that $t(p),t(q) < 1-\frac{1}{j}$ and hence we know that by Proposition~\ref{prop:biLip} and Remark~\ref{rmrk:biLip} (note that case 3 cannot occur because $\etab_\bsigma = \g_1 \preccurlyeq \g_j$)
 \[
  \hat d_j(p,q) = \max \left\{ |t(p)-t(q)|, \tfrac{1}{j} \hat d_\bsigma(p,q) \right\},
 \]
 since we can always restrict our attention to broken causal curves with image in $[0,1-\frac{1}{j}] \times \Sigma$. Since $\frac{1}{j} \hat d_\bsigma(p,q)\to 0$, we obtain a pointwise limit
 \[
  \lim_{j \to \infty} \hat d_j(p,q) = |t(p)-t(q)|.
 \]

 \textit{Case 2: $t(p), t(q)\ge 1$.} Let $p'_j = (1-\frac{1}{j},p_\Sigma)$ and $q'_j = (1-\frac{1}{j},q_\Sigma)$. Then due to the triangle inequality
 \begin{align*}
  |t(p)-t(q)| \leq \hat d_j(p,q) &\leq \hat d_j(p,p'_j) + \hat d_j(p'_j,q'_j) + \hat d_j(q'_j,q) \\
                       &= |t(p)-1|+ \frac{1}{j} d_\bsigma(p_\Sigma,q_\Sigma) + |t(q)-1| ,
 \end{align*}
 thus
 \[ \limsup_{j\to\infty} \hat d_j(p,q) \leq |t(p)-1| + |t(q)-1|. \]
 in this case. Since by \eqref{ineq2},
 \[ \limsup_{j\to\infty} \hat d_j(p,q) \leq \hat d_\bsigma(p,q), \]
 we obtain
 \begin{align}\label{limsupest}
     \limsup_{j\to\infty} \hat d_j(p,q) \leq \min\left\{\hat d_\bsigma(p,q),|t(p)-1| + |t(q)-1| \right\}.
 \end{align}
 
 It remains to be shown that these values are indeed attained. If $\hat{d}_\bsigma(p,q)\le|t(p)-1|+|1-t(q)|$ then we note that a $\hat{d}_{\bsigma}$ (almost) length-minimizing curve is contained in $[1,2]\times \Sigma$ with the same  null length with respect to $\g_j$. Furthermore, for any piecewise causal curve $\beta$ which enters the region $[0,1]\times \Sigma$ the null length with respect to $\g_j$ is at least as long as 
 \begin{align*}
   \hat L_j(\beta) &\ge |t(p)-1|+|t(q)-1| \ge \hat{d}_\bsigma(p,q).
 \end{align*}
 Since these two classes exhaust all possible curves we find
 \begin{align*}
  \hat{d}_j(p,q) \ge \hat{d}_\bsigma(p,q).
 \end{align*}
 If, on the other hand, $\hat{d}_\bsigma(p,q)\ge|t(p)-1|+|1-t(q)|$ then any curve which remains in $[1,2]\times \Sigma$ will have $\g_j$ null length equal to $\hat{d}_\bsigma(p,q)$ and any curve which enters the region $[0,1]\times \Sigma$  will have $\g_j$ null length at least as long as $|t(p)-1|+|t(q)-1|$ so that 
 \begin{align}\label{Case6LowerBoundPart2}
  \hat{d}_j(p,q)& \ge |t(p)-1|+|1-t(q)|.
 \end{align}
 Hence we obtain a limiting statement involving the right hand side of \ref{limsupest}, and hence together
 \begin{align*}
  \lim_{j \rightarrow \infty} \hat{d}_j(p,q) =  \min \left\{ \hat{d}_\bsigma(p,q),|t(p)-1|+|1-t(q)|\right\}.
 \end{align*}

 \textit{Case 3: $t(p)\geq 1,\,t(q) < 1$.} For sufficiently large $j$ we know that $t(q) < 1 -\frac{1}{j}$. We include the auxiliary point $p' = (t(q),p_\Sigma)$. Then
 \begin{align*}
  |t(p)-t(q)| \leq \hat d_j(p,q) &\leq \hat d_j(p,p') + \hat d_j(p',q) \\
                       &=|t(p)-t(q)| + \frac{1}{j} \hat d_\bsigma (p',q),
 \end{align*}
 which yields
 \[
  \lim_{j\to\infty} \hat d_j(p,q) = |t(p)-t(q)|.
 \]

 Combining Cases 1 to 3 we see that $\hat d_j \to d_0$ uniformly and, therefore, $d_0$ is also continuous. Note that symmetry and the triangle inequality follow for $d_0$ by taking limits but $d_0$ is clearly not definite on $[0,1] \times \Sigma$. Hence we identify points whose $d_0$ distance is $0$ in order to define the metric space $M_{\infty}$ with the distance function $d_{\infty}$. Now by the observation in \cite{BBI}*{Example 7.4.4} we find that
 \begin{align}
     (M,\hat{d}_j) \GHto (M_{\infty},d_{\infty}).\label{GHConvEq}
 \end{align}
 Finally, we show that the SWIF limit exists and is a proper subset of the GH limit. To this end we note that  by \cite{SW}*{Lemma 2.43}
 \begin{align*}
    \mass \left((M,\hat{d}_j)\right)& \le \frac{2^m}{\omega_m} \mathcal{H}^m(M,\hat{d}_j),
    \\\mass \left((\partial M,\hat{d}_j)\right)& \le \frac{2^{m-1}}{\omega_{m-1}} \mathcal{H}^{m-1}(\partial M,\hat{d}_j),
 \end{align*}
 where $\omega_m$, $m \in \N$, is the volume of the unit ball in $\R^m$ and $\mathcal{H}^m(X,d)$ is the $m$-dimensional Hausdorff measure of $(X,d)$. Due to \eqref{ineq2} we see that for any $U \subseteq M$
 \begin{align*}
   \diam_{\hat{d}_j}(U) &\le \diam_{\hat{d}_{\bsigma}}(U),
 \end{align*}
 which implies
 \begin{align*}
  \mathcal{H}^m(M,\hat{d}_j) &\le \mathcal{H}^m(M,\hat{d}_{\bsigma}) \le V_0,
  \\ \mathcal{H}^{m-1}(\partial M,\hat{d}_j) &\le \mathcal{H}^{m-1}(\partial M,\hat{d}_{\bsigma}) \le A_0.
 \end{align*}
 By \cite{SW}*{Theorem 3.20} we know that a SWIF limit $(X,d_{\infty})$ of the sequence $(M,\hat d_j)_j$ exists where $X$ is possibly the zero space. Since the SWIF limit must be $n+1$-dimensional we know that $X \subseteq M'$ and hence a proper subset of the GH limit.
\end{proof}
 
\begin{rmrk}
 In Example \ref{GHandSWIFdisagree} we note that the expected SWIF limit is $X = (M', d_{\infty})$. We do not go through the work of explicitly showing this result here since new machinery which can be used to estimate the SWIF distance between spacetimes with the null distance should be developed first. Since a lot of the literature on estimating the SWIF distance has involved Riemannian manifolds (see \cite{BAB, AS, Lak, LS, LeeS}) it is necessary to first investigate extensions of these results for Lorentzian manifolds equipped with the null distance. Nonetheless, our results on warped product spacetimes are a first major step towards understanding spacetime convergence with respect to the null distance, and highlight the potential of this notion in geometric analysis and general relativity.
\end{rmrk}


\section*{Funding}

This work was supported by the National Science Foundation [DMS 1612049 to B.A.]; and the Dutch Research Council [VI.Veni.192.208 to A.B.]. A.B. also gratefully acknowledges support from the Simons Center for Geometry and Physics, Stony Brook University, to attend two workshops in 2018 and 2019 mentioned below.

\section*{Acknowledgements}

The authors are grateful to the organizers (Piotr Chru\'{s}ciel, Richard Schoen, Christina Sormani, Mu-Tao Wang, and Shing-Tung Yau) of the SCGP Workshop ``Mass in General Relativity" which took place in March 2018. This is where the authors first met. B.A. is grateful to Xiaochun Rong for the invitation to speak at the Rutgers Geometry/Topology Seminar where the authors met for a second time and initiated the collaboration on this project. A.B. is also grateful to the organizers (Mike Anderson, Jeff Jauregui, Philippe LeFloch, Christina Sormani) for the invitation to the SCGP Workshop ``Convergence and Low Regularity in General Relativity" in April 2019. In particular, both authors would like to thank Christina Sormani for continuing encouragement and discussions. The authors are also grateful to Michael Kunzinger and Roland Steinbauer for their interest and careful reading of a previous version. Finally, many thanks to the referee for useful comments and suggestions.


\end{document}